\newtheorem{thm}{Theorem}[section]
\newtheorem{lemma}[thm]{Lemma}
\newtheorem{prop}[thm]{Proposition}
\theoremstyle{definition}
\theoremstyle{remark}
\newtheorem{rem}[thm]{Remark}
\newcommand{\Z}{\mathbb{Z}}
\newcommand{\N}{\mathbb{N}}
\newcommand{\R}{\mathbb{R}}
\newcommand{\Q}{\mathbb{Q}}
\newcommand{\G}{\mathcal{G}}
\newcommand{\f}{\varphi}
\newcommand{\St}{{\rm St}}
\newcommand{\alt}{{\rm Alt}}
\newcommand{\be}{\bar e}
\renewcommand{\d}{{\rm d}}
\DeclareMathOperator{\Fix}{Fix}
\newcounter{desccount}
\newcommand{\descitem}[1]{%
  \bf \item[#1] \refstepcounter{desccount}\label{#1}
}
\newcommand{\descref}[1]{\hyperref[#1]{#1}}
\begin{document}

\title{New examples of groups acting on real trees}

\address{Mathematical Sciences,
University of Southampton, Highfield, Southampton, SO17 1BJ, United Kingdom.}

\author{Ashot Minasyan}

\email{aminasyan@gmail.com}

\subjclass[2010]{20E08, 20F65.}
\keywords{Groups acting on trees, fixed point properties, group splittings.}

\begin{abstract}
We construct the first example of a finitely generated group which has Serre's property (FA) (i.e., whenever it acts on a simplicial tree it fixes a vertex),
but admits a fixed point-free action on an $\R$-tree with finite arc stabilizers.
We also give a short and elementary construction of finitely generated groups that have property (FA) but do  not have (F$\R$).
\end{abstract}
\maketitle

\section{Introduction}
In the 1970's  Bass and Serre developed the theory of groups acting on simplicial trees (see \cite{Serre}). In particular, they proved that if a finitely generated group $G$ acts on a simplicial tree non-trivially (i.e., without a global fixed point)
and without edge inversions, then $G$ splits as the fundamental group of a finite graph of groups, where vertex groups are proper subgroups of $G$. Since then there has been a lot of interest in establishing similar
results for actions on more general (non-simplicial) trees. For example, Gillet and Shalen \cite{Gillet-Shalen} 
proved that if $\Lambda$ is a subgroup of $\R$ of $\mathbb{Q}$-rank $1$, then any finitely presented
group admitting a non-trivial action without inversions on a $\Lambda$-tree splits (as an amalgamated product or an HNN-extension) over a proper subgroup.

In the case when $\Lambda=\R$, the first breakthrough was due to Rips, who laid the foundation for the theory of groups acting on $\R$-trees. In particular, he proved that if a finitely presented group $G$ admits a free isometric action
on an $\R$-tree then $G$ is isomorphic to the free product of free abelian and surface groups. Even though Rips never published his work on this topic,
two different proofs of Rips's theorem for finitely generated groups appear in the paper of Bestvina and Feighn \cite{BF} and in the paper \cite{GLP} of
Gaboriau, Levitt and Paulin.

In \cite{BF} Bestvina and Feighn generalized Rips's theory to cover non-free actions. More precisely, they proved that if a finitely presented group $G$ acts non-trivially and stably on
some $\R$-tree $T$, then $G$ splits over an extension $E$-by-finitely generated abelian group, where $E$ fixes an arc of $T$. Here the action is called \emph{stable} if every non-degenerate subtree $S$ of
$T$ contains a non-degenerate subtree
$S' \subseteq S$ such that the pointwise stabilizer $\St_G(S'')$, of any non-degenerate subtree $S'' \subseteq S'$, coincides with the pointwise stabilizer $\St_G(S')$, of $S'$ in $G$
(e.g., this happens if for any descending chain of arcs in $A_1 \supseteq A_2 \supseteq \dots$ in $T$, there is $N \in \N$ such that $\St_G(A_i)=\St_G(A_j)$ for all $i,j \ge N$).

The next important contribution to this theory was made by Sela \cite{Sela}. He showed that if
a freely indecomposable finitely generated group $G$ acts non-trivially and super-stably on an $\R$-tree $T$ with trivial tripod stabilizers
then $T$ has a particular structure and $G$ has an associated decomposition as a fundamental group of a graph of groups
(for the definition of super-stability see \cite[p. 160]{Guirardel}).
In a more recent work \cite{Guirardel}, Guirardel gave an example showing that super-stability is a necessary assumption in Sela's theorem;
he also generalized this result by substituting some of its assumptions with weaker ones.

As the above results show, in many cases the existence of a non-trivial action of a group $G$ on an $\R$-tree (or a $\Lambda$-tree) $T$ implies that $G$ has a non-trivial splitting,
and thus it acts non-trivially on the simplicial Bass-Serre tree associated to this splitting. In fact, Shalen \cite{S} asked whether this is true in general, i.e., if every finitely generated group admitting a non-trivial action on
an $\R$-tree also admits a non-trivial action (without edge inversions) on some simplicial tree. A clear indication that the answer to this question should be negative was
given by Dunwoody in \cite{Dun-small_unstable}, who constructed an example of a finitely generated group which has a non-trivial unstable action on some $\R$-tree with finite cyclic
arc stabilizers, but cannot act non-trivially on a simplicial tree with small edge stabilizers (although, as observed in \cite{Dun-small_unstable},
this group does possess a non-trivial action on some simplicial tree).

Recall that a group $G$ is said to have Serre's property (FA) if any simplicial action of $G$ on a simplicial tree (by isometries and without edge inversions) fixes a vertex;
similarly, $G$ has property (F$\R$) if it cannot act non-trivially on any $\R$-tree.
Clearly (F$\R$) implies (FA), and Shalen's question above asks whether the converse is true for finitely generated groups.
The aim of this work is to produce counterexamples to this question. More precisely, our main result is the following:

\begin{thm}\label{thm:main} There exists a finitely generated  group $L$ which has property (FA) and admits a non-trivial action on some $\R$-tree $T$, such that the arc stabilizers for this action are finite.
Moreover, $L$ is not a quotient of any finitely presented group with property (FA).
\end{thm}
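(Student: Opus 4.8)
The strategy I would follow is to obtain $L$ as a limit of finitely generated groups acting on simplicial trees, arranged so that the limit group acquires an action on an $\R$-tree while every simplicial tree action collapses; granting this, the last assertion of the theorem is almost formal via the Bestvina--Feighn theorem recalled above. Concretely, I would build $L=\varinjlim G_n$ along a chain of surjections $G_1\twoheadrightarrow G_2\twoheadrightarrow\cdots$ of finitely generated groups, each $G_n$ being generated by the images of one fixed finite set (so that $L$ is finitely generated) and acting on a simplicial tree $\Gamma_n$ with finite edge stabilizers of uniformly bounded order, with the surjections and the trees chosen compatibly --- say, with equivariant morphisms between consecutive $\Gamma_n$ after rescaling their metrics by a sufficiently rapidly decreasing sequence. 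A Gromov--Hausdorff (or ultralimit) limit of the rescaled trees is then an $\R$-tree $T$ carrying an isometric $L$-action. Arc stabilizers of this action embed into limits of edge stabilizers of the $\Gamma_n$, hence are finite of order at most the uniform bound; consequently the action is \emph{stable}, since along any descending chain of arcs $A_1\supseteq A_2\supseteq\cdots$ the orders of the finite groups $\St_L(A_1)\subseteq\St_L(A_2)\subseteq\cdots$ form a nondecreasing bounded sequence of integers and hence are eventually constant. Keeping the rescaled translation lengths of suitable elements bounded away from $0$ ensures that the $L$-action on $T$ is non-trivial, i.e.\ without a global fixed point.

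\emph{Property (FA), and the main difficulty.} Since $L$ is finitely generated, Serre's criterion reduces (FA) to showing that $L^{\mathrm{ab}}$ is finite and that $L$ is not a non-trivial amalgamated free product. Finiteness of the abelianisation is built into the $G_n$ (it also gives $\mathrm{Hom}(L,\R)=0$, used below). The essential point --- and, I expect, the main obstacle --- is the absence of splittings: the surjections $G_n\twoheadrightarrow G_{n+1}$ have to be designed so that no splitting of $G_n$ survives in $G_{n+1}$ (for instance by imposing relations of small-cancellation type over $G_n$, or by adjoining suitable subgroups, so that an edge group of a prescribed splitting of $G_n$ becomes the whole group in $G_{n+1}$, or so that an acylindricity or hyperbolicity argument obstructs lifting that splitting). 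A compactness argument across all stages should then show that a hypothetical non-trivial action of $L$ on a simplicial tree would already be detectable on some $G_n$ in a form incompatible with the chosen quotient $G_{n+1}$. Making this precise requires bounding the complexity of the splittings of the $G_n$, so that finitely much ``killing'' per stage suffices --- this is where smallness of the edge stabilizers together with Dunwoody-type accessibility, or hyperbolicity of the $G_n$, should enter --- while ensuring that the imposed relations do not also collapse the $\R$-tree action of the first paragraph. Balancing these two opposing demands is the heart of the construction.

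\emph{The final assertion.} Suppose $Q$ is finitely presented and $\pi\colon Q\twoheadrightarrow L$ is a surjection; let $Q$ act on $T$ through $\pi$. The minimal $Q$-invariant subtree of $T$ equals the minimal $L$-invariant subtree, which is non-degenerate since $L$ has no global fixed point, so the $Q$-action is non-trivial (the degenerate possibilities of an action on a line, or an action with a fixed end, are ruled out by $\mathrm{Hom}(L,\R)=0$ together with property (FA) of $L$). The $Q$-action is also stable: for arcs $A_1\supseteq A_2\supseteq\cdots$ one has $\St_Q(A_j)=\pi^{-1}\big(\St_L(A_j)\big)$, and since the chain of the $\St_L(A_j)$ stabilizes, so does the chain of the $\St_Q(A_j)$. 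By the Bestvina--Feighn theorem quoted in the introduction, $Q$ therefore splits as the fundamental group of a non-trivial graph of groups whose edge group is an extension of a subgroup fixing an arc by a finitely generated abelian group; in particular $Q$ acts non-trivially and without inversions on the associated Bass--Serre tree, so $Q$ does not have property (FA). This contradiction shows that $L$ is not a quotient of any finitely presented group with property (FA).
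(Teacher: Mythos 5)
Your plan has a fatal internal contradiction, and it sits exactly at the point the paper identifies as the crux. You propose to arrange that the edge stabilizers of the approximating simplicial trees have \emph{uniformly bounded} order, so that the arc stabilizers of the limit action on $T$ are finite of bounded size and the action is \emph{stable}. But, as recalled in the introduction of the paper, Guirardel's theorem shows that a finitely generated group acting non-trivially on an $\R$-tree with finite arc stabilizers of bounded size cannot have property (FA) (and Sela's theorem already rules out the trivial-stabilizer case). So if your construction succeeded in producing a stable action with uniformly bounded arc stabilizers, the group $L$ would necessarily fail (FA), and the theorem would be false for your $L$. The actual construction must, and does, produce an \emph{unstable} action: the stabilizers of a nested sequence of arcs form a strictly increasing sequence of finite groups with no uniform bound. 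Getting finite (but unbounded) arc stabilizers in the limit is itself delicate --- it is not automatic from taking an ultralimit of trees with finite edge stabilizers; the paper needs strong convergence of the folding sequence (property \descref{(P4)} of the base group $M$, proved via small cancellation over hyperbolic groups) precisely to show that the stabilizer of an arc of $T$ is carried isomorphically from the stabilizer of an edge of some finite-stage tree $T_m$.

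The collapse of stability also takes down your proof of the final assertion: your argument that the chain $\St_Q(A_j)=\pi^{-1}(\St_L(A_j))$ stabilizes relies on the chain $\St_L(A_j)$ stabilizing, which it does not, so Bestvina--Feighn cannot be applied to $Q$ acting on $T$. The paper's route is far more elementary and robust: since $L$ is a direct limit of the groups $L_i$ with surjective bonding maps, any finitely presented $P$ mapping onto $L$ already maps onto some $L_i$, which splits as a non-trivial amalgam over a finite group and hence acts non-trivially on its Bass--Serre tree; so $P$ fails (FA). Finally, your approach to (FA) (finite abelianization plus killing all amalgam decompositions stage by stage, with an accessibility/compactness argument) is left entirely schematic and is not how the paper proceeds: there, each $L_i=M\ast_{G_{i-1}}M_i$ visibly splits, and (FA) for the limit is proved by a short direct argument (Lemma \ref{lem:(FA)}) exploiting that $M$ itself has (FA) and that consecutive images $\overline{M}_i,\overline{M}_{i+1}$ generate $L$, so their fixed-point sets in any simplicial tree are forced to meet.
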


The theorem of Sela \cite{Sela} mentioned above implies that a finitely generated group which acts non-trivially on an $\R$-tree
with trivial arc stabilizers cannot have (FA), and Guirardel's work \cite{Guirardel} shows that the same is true if one allows finite arc stabilizers of bounded size.
In Theorem \ref{thm:main} the stabilizers of a nested sequence of arcs will normally form a strictly increasing sequence of finite groups, in particular
the action of $L$ on $T$ is unstable. Nonetheless, our construction is sufficiently flexible and allows to ensure that the finite arc stabilizers have some extra properties. For example, one can take them to be
$p$-groups (see Theorem~\ref{thm:small_arc_stab} and the discussion above Lemma \ref{lem:K_j} in Section \ref{sec:M}).

The last claim of Theorem \ref{thm:main} can be compared with the fact that  any finitely generated group with property (F$\R$) \emph{is} a quotient of a finitely presented group
with this property (this follows from a theorem of Culler and Morgan \cite{Cul-Mor} establishing the compactness of the space of projective length functions for non-trivial actions of
any given finitely generated group on $\R$-trees; different proofs of this
fact, using ultralimits, were given by Gromov \cite{Gromov-random} and  Stalder \cite{Stalder}).

The pair $(L,T)$ from Theorem \ref{thm:main} is constructed as the limit of a strongly convergent sequence $(L_i,T_i)_{i \in \N}$, where each $L_i$ is a group splitting as a
free amalgamated product over a finite subgroup and $T_i$ is the Bass-Serre tree associated to this splitting. The morphism from $T_i$ to $T_{i+1}$ is not simplicial (but it is a morphism of $\R$-trees),
as it starts with edge subdivision and then applies a sequence of edge folds (see Section \ref{sec:folding_seq}).
We analyze this morphism carefully in order to control the arc stabilizers for the resulting action of $L$ on $T$. The construction of $L_i$ uses an auxiliary group $M$
satisfying certain properties (see \descref{(P1)}-\descref{(P4)} below). The main technical content of the paper is in Section \ref{sec:M}, where we construct a suitable group $M$ using small cancellation
theory over hyperbolic groups, and in Section \ref{sec:strong_conv}, where we prove that the corresponding sequence $(L_i,T_i)_{i \in \N}$ is strongly convergent (in the sense of Gillet and Shalen \cite{Gillet-Shalen}).

Theorem \ref{thm:main}  also shows that finite presentability is a necessary assumption in the result of Gillet and Shalen mentioned above (when the $\mathbb Q$-rank of $\Lambda$ is $1$), because the group $L$
can be seen to act non-trivially on a $\mathbb D$-tree, where $\mathbb D$ denotes the group of dyadic rationals -- see Remark \ref{rem:dyadic} below.

However, we start this paper with a short and elementary proof that finitely generated groups which have (FA) but do not have (F$\R$) exist,
a fact first proved in our preprint \cite{D-M} with M. Dunwoody, -- see Section \ref{sec:short}.
It is based on the idea that it is possible to avoid many technicalities required for the proof of Theorem \ref{thm:main} if one is ready to give up the control over the limit $\R$-tree.
Indeed, the amalgamated products $L_i$ and the epimorphisms $\phi_i:L_i \to L_{i+1}$, $i \in \N$, can be constructed in a purely algebraic way starting from
any finitely generated group $M$ with the first two properties \descref{(P1)}, \descref{(P2)} below, and \descref{(P3)} is enough to ensure that the direct limit
$L=\lim_{i\to \infty} (L_i,\phi_i)$ has property (FA). The fact that $L$ acts non-trivially on some $\R$-tree $T'$ can be established by using the general
`existence' result of Culler and Morgan \cite{Cul-Mor} mentioned above (in contrast, the construction of the $\R$-tree $T$ from Theorem \ref{thm:main} is quite explicit).
This also gives an extra benefit that the auxiliary group $M$ is easier to construct, as it does not have to satisfy the last property \descref{(P4)}
(which is needed to prove that the convergence of $(L_i,T_i)_{i \in \N}$ is strong).
The main disadvantage of this approach is that we have no control over the arc stabilizers for the action of $L$ on the $\R$-tree $T'$.

\smallskip
\noindent {\bf Acknowledgement.} The author is grateful to Gilbert Levitt and Yves de Cornulier for their comments on the first version of this article, and to the referee for
giving many useful remarks and suggestions.

\section{A short proof that (FA) does not imply (F$\R$)} \label{sec:short}
In this section we will present a short proof of a simplification of Theorem \ref{thm:main}, which gives no information about arc stabilizers:

\begin{thm}\label{thm:short} There exists a finitely generated  group $L$ which has property (FA) and admits a non-trivial action on some $\R$-tree, i.e., $L$ does not have (F$\R$). Moreover, $L$ is not a
quotient of any finitely presented group with property (FA).
\end{thm}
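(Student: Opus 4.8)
The plan is to realize $L$ as a direct limit of amalgamated free products $L_i$, with epimorphisms $\phi_i\colon L_i\to L_{i+1}$, built from a fixed finitely generated group $M$ satisfying properties \descref{(P1)}--\descref{(P3)}. First I would set up the inductive construction: starting with $L_1$ (something like a free amalgamated product of two copies of $M$, or of $M$ with a related group, over a common subgroup), I would define $L_{i+1}$ as a quotient of a new amalgamated product $L_i *_{C_i} (\text{something})$ in such a way that the natural map $L_i\to L_{i+1}$ is surjective. The point of \descref{(P1)} and \descref{(P2)} should be exactly that at each stage one can ``kill'' the nontriviality of the previous splitting — i.e., force any vertex group of the $L_i$-splitting to become elliptic or to collapse — while keeping the group finitely generated and the maps surjective. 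Then $L:=\varinjlim(L_i,\phi_i)$ is finitely generated (generated by the image of a generating set of $L_1$).

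Next I would prove that $L$ has property (FA). Suppose $L$ acts on a simplicial tree without inversions; pull this action back along $L_1\to L$ to get an action of $L_1=A_1*_{C_1}B_1$. Using Bass--Serre theory (or the standard characterization of (FA) via generation by elliptic subgroups plus no splitting plus no $\Z$-quotient), I would argue that property \descref{(P3)} of $M$ guarantees each vertex group $A_1,B_1$ is elliptic in any $L_1$-action that factors through $L$, hence (by the structure of the successive quotients) the whole of $L$ is generated by elliptic subgroups that pairwise generate elliptic subgroups, so $L$ fixes a point. The cleanest route is probably: show $L$ cannot split nontrivially over any subgroup and cannot surject onto $\Z$, and that $L$ is finitely generated, then invoke the standard fact (Serre) that these together with ``generated by finitely many elliptic elements'' give (FA). The construction should be arranged so that every finite subset of $L$ lies in the image of some $L_i$ and has a bounded-length reduced form that becomes trivial further along the chain, forcing ellipticity.

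For the non-triviality of some $\R$-tree action, I would invoke the Culler--Morgan compactness theorem directly, as the excerpt suggests: it suffices to show that $L$ does \emph{not} have property (F$\R$), and here the idea is that each $L_i$, being a nontrivial amalgamated product, acts nontrivially on its Bass--Serre tree $T_i$, and one extracts a limiting nontrivial length function on $L$. Concretely, choose basepoints and rescale the $T_i$ so that the induced (pseudo-)length functions $\ell_i$ on a fixed generating set of $L$ stay in a compact set and do not all converge to $0$; by Culler--Morgan compactness the (projectivized) limit $\ell_\infty$ is the length function of a nontrivial $L$-action on an $\R$-tree. One has to check the non-vanishing of $\ell_\infty$: some group element must have positive translation length in the limit, and this should follow from choosing, for each $i$, an element $g_i$ of uniformly bounded word length that is hyperbolic in $T_i$ with translation length bounded below after rescaling — exactly the kind of control \descref{(P1)} should provide, even without \descref{(P4)}.

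Finally, the last sentence — $L$ is not a quotient of any finitely presented group with (FA) — I would prove by contradiction: if $\pi\colon P\twoheadrightarrow L$ with $P$ finitely presented and having (FA), lift the nontrivial $\R$-tree action of $L$ to $P$; by Culler--Morgan's compactness applied to the finitely presented $P$ (here finite presentability is essential), the nontrivial $\R$-tree action of $P$ would be approximated well enough to produce a nontrivial \emph{simplicial} tree action of $P$, contradicting (FA) of $P$. Alternatively and more elementarily, since $P$ is finitely presented it factors through some $L_i$ (as $L=\varinjlim L_i$ and finitely many relators are satisfied already in some $L_i$), so $P$ surjects onto $L_i$; but $L_i$ is a nontrivial amalgamated product, hence does not have (FA), hence $P$ does not have (FA) — a contradiction. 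The main obstacle is the verification that property \descref{(P3)} really does force ellipticity of the vertex groups in all relevant actions (equivalently, that $L$ genuinely has (FA) and not merely (F$\Z$) or some weaker property); everything else is assembling known machinery, whereas this step is where the specific algebraic features of $M$ enter.
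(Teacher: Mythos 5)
Your overall architecture matches the paper's: $L$ is a direct limit of amalgams built from a single group $M$ satisfying \descref{(P1)}--\descref{(P3)}, the non-trivial $\R$-tree action comes from Culler--Morgan compactness (the paper's primary route is the equivalent observation that (F$\R$) is open in the space of marked groups, so its complement is closed under direct limits), and the last assertion is proved exactly as in your ``more elementary'' alternative, by factoring a finitely presented $P$ through some $L_i$. However, the step you yourself flag as ``the main obstacle'' is a genuine gap, and it is where all the content of the theorem lives. Knowing that $\overline{M}$ and each $\overline{M}_i$ are elliptic (which does follow from \descref{(P3)}) is nowhere near enough: two elliptic subgroups that generate a group need not have a common fixed point --- indeed each $L_i$ itself is generated by the two elliptic subgroups $M$ and $M_i$ and acts non-trivially on its Bass--Serre tree, so any argument of the form ``generated by elliptic subgroups, hence (FA)'' proves too much. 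Likewise, invoking Serre's criterion (no splitting, no $\Z$-quotient) merely relocates the problem: proving that $L$ admits no splitting is exactly as hard as proving (FA) directly, and you supply no mechanism for it. The paper's actual argument (Lemma \ref{lem:(FA)}) is a descent: given an action on a simplicial tree $S$, pick $u \in \Fix(\overline{M})$ and choose $i$ minimizing the integer $\d_S\bigl(u, \Fix(\overline{M}_i)\bigr)$; then, using that $\psi_{i+1}(G_i)$ fixes the segment from $u$ to any $w\in\Fix(\overline{M}_{i+1})$ while the image of $\beta_{i+1}(G_i^{a_i})$ fixes the segment from $w$ to the point $v$ of $\Fix(\overline{M}_i)$ nearest $u$, and that these two subgroups generate $\overline{M}_{i+1}$ by \descref{(P2)}, one finds the median of $u,v,w$ lies in $\Fix(\overline{M}_{i+1})$, forces it to equal $v$ by minimality, and concludes because $L = \langle \overline{M}_i, \overline{M}_{i+1}\rangle$. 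This is precisely where \descref{(P1)} and \descref{(P2)} enter, and it depends on the exact definition of $\phi_i$.

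That points to a second, related imprecision. The maps $\phi_i$ are not arbitrary surjections induced by $G_{i-1} \leqslant G_i$: the ``na\"ive'' map $L_i \to L_{i+1}$ that is the identity on $M$ and sends $M_i$ to $M_{i+1}$ is useless here. The paper's $\phi_i$ is the identity on $M$ but sends $M_i$ onto the conjugate $M^{\beta_{i+1}(a_i)}$ inside $L_{i+1}$, and this twist by $a_i$ is exactly what makes $L=\langle \overline{M}_i,\overline{M}_{i+1}\rangle$ and lets the descent close up; your sketch (``a quotient of $L_i$ amalgamated with something'') never pins this down. Finally, since the theorem is an existence statement, you must also exhibit a concrete $M$ satisfying \descref{(P1)}--\descref{(P3)}; the paper takes $M$ to be Thompson's group $V$ (which has (FA) by Farley), with $G_i$ the pointwise stabilizer of $[0,1/2^{i+1})$ and $a_i$ the swap of $[0,1/2^{i+1})$ with $[1/2^{i+1},1/2^i)$, and verifies \descref{(P2)} by a direct computation with the standard generators. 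Without these two ingredients the proposal is a plausible plan rather than a proof.
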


The first proof of Theorem \ref{thm:short} appeared in the preprint \cite{D-M} in 2012. Unfortunately this article was subsequently withdrawn from arXiv, due to
a disagreement between its authors and will not be published. The proof of Theorem \ref{thm:short} given in this section is inspired by the ideas from \cite{D-M}, which were
obtained in collaboration of the author with M. Dunwoody. However, the construction here is quite different from the one in \cite{D-M}, as it uses a single base group $M$ with property (FA) instead of a sequence of such groups, which allows to shorten the proof of the property (FA) for the limit group $L$. Moreover our argument below is purely group-theoretic and, unlike the proof from \cite{D-M}, it does not require any familiarity with the theory of tree foldings. Finally, the folding sequence employed in \cite{D-M}
cannot be used to produce a strong limit of simplicial trees, which is an essential ingredient in our proof of the main result (Theorem \ref{thm:main}).

\subsection{The groups $L_i$}\label{subsec:L_i}

Given a group  $G$, a subset $S \subseteq G$ and elements $g,h \in G$, throughout the paper we will employ the notation
$h^g:=ghg^{-1}$ and $S^g:=\{gsg^{-1} \mid s \in S\}$. We will also use $\N$ to denote the set of natural numbers $\{1,2,\dots\}$ (without zero).

The proof of Theorem \ref{thm:short} will make use of a finitely generated group $M$, containing
 a strictly ascending sequence of subgroups $G_0<G_1<G_2< \dots$ together with elements $a_i \in M$,
$i \in \N$,  such that the following two conditions are satisfied for all $i \in \N$:

\begin{itemize}
\descitem{(P1)} \normalfont $a_i$ centralizes $G_{i-1}$ in $M$;
\descitem{(P2)} \normalfont  $M= \langle G_i, G_i^{ a_i} \rangle$.
\end{itemize}

For each $i \in \N$, let $M_i$ be a copy of $M$ with a fixed isomorphism $\beta_i:M \to M_i$.
Let $L_i:=M \ast_{G_{i-1}=\beta_i(G_{i-1})} M_i$ be the amalgamated free product of $M$ and $M_i$, given by the following presentation:
\begin{equation}\label{eq:L_i-pres}
L_i= \langle M,M_i \mid g=\beta_i(g) \mbox{ for all } g \in G_{i-1} \rangle.
\end{equation}

\subsection{The epimorphism from $L_i$ to $L_{i+1}$}\label{subsec:homom}
The next lemma defines an epimorphism from $L_i$ to $L_{i+1}$ and lists some of its properties.

\begin{lemma} \label{lem:fold_props}
For each $i \in \N$ there is a unique  homomorphism $\phi_i: L_i \to L_{i+1}$ such that
\begin{equation}\label{eq:phi_i-def}
\phi_i(g)=g \quad\forall\, g \in M, \mbox{ and } \phi_i(h)=\beta_{i+1}(a_i)\beta_{i}^{-1}(h) \beta_{i+1}(a_i^{-1}) \quad\forall\, h \in M_i.
\end{equation}

Moreover, the homomorphism $\phi_i$ has the following properties:

\begin{itemize}
\item[(i)] the restrictions of $\phi_i$ to $M\leqslant L_i$ and to $M_i \leqslant L_i$ are injective, $\phi_i(M)=M \leqslant L_{i+1}$ and
$\phi_i(M_i)=M^{\beta_{i+1}(a_i)} \leqslant L_{i+1}$;
\item[(ii)]  $\phi_i:L_i \to L_{i+1}$ is surjective;
\item[(iii)] $L_{i+1}=\langle \phi_i(M_i),M_{i+1} \rangle$.
\end{itemize}
\end{lemma}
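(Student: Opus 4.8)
The plan is to first establish the existence and uniqueness of $\phi_i$ via the universal property of amalgamated free products, and then deduce properties (i)--(iii) from properties \descref{(P1)} and \descref{(P2)} of $M$. For existence, recall that $L_i = M \ast_{G_{i-1} = \beta_i(G_{i-1})} M_i$, so to define a homomorphism out of $L_i$ it suffices to give homomorphisms $\alpha: M \to L_{i+1}$ and $\alpha': M_i \to L_{i+1}$ that agree on the amalgamated subgroup, i.e. $\alpha(g) = \alpha'(\beta_i(g))$ for all $g \in G_{i-1}$. I would take $\alpha$ to be the inclusion $M \hookrightarrow L_{i+1}$ and $\alpha'$ to be the composition of $\beta_i^{-1}: M_i \to M$ with conjugation by $\beta_{i+1}(a_i)$ inside $L_{i+1}$; both are homomorphisms since conjugation is an automorphism. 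The compatibility condition on $G_{i-1}$ reduces to checking that $g = \beta_{i+1}(a_i)\, g\, \beta_{i+1}(a_i)^{-1}$ in $L_{i+1}$ for all $g \in G_{i-1}$; since $M$ embeds in $L_{i+1}$, this is equivalent to $a_i$ centralizing $G_{i-1}$ in $M$, which is exactly \descref{(P1)}. (Here one should be slightly careful: inside $L_{i+1}$ the subgroup $\beta_{i+1}(G_i)$ is identified with $G_i \geqslant G_{i-1}$, so $\beta_{i+1}(a_i)$ conjugates the copy of $G_{i-1}$ coming from $M$ the same way $a_i$ does in $M$ — this is where the indexing $L_{i+1} = M \ast_{G_i = \beta_{i+1}(G_i)} M_{i+1}$ and the inclusion $G_{i-1} < G_i$ are used.) Uniqueness is immediate because $M$ and $M_i$ generate $L_i$ and the formula \eqref{eq:phi_i-def} prescribes $\phi_i$ on both.

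For part (i): the restriction of $\phi_i$ to $M$ is the inclusion, hence injective with image $M \leqslant L_{i+1}$. The restriction to $M_i$ is $\beta_{i+1}(a_i) \cdot \beta_i^{-1}(\cdot) \cdot \beta_{i+1}(a_i)^{-1}$, a composition of the isomorphism $\beta_i^{-1}: M_i \xrightarrow{\sim} M$, the inclusion $M \hookrightarrow L_{i+1}$, and conjugation by $\beta_{i+1}(a_i) \in L_{i+1}$; since $M$ embeds in the amalgam $L_{i+1}$, this composition is injective, with image $M^{\beta_{i+1}(a_i)} \leqslant L_{i+1}$.

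For part (ii), surjectivity: by part (i), $\phi_i(M_i) = M^{\beta_{i+1}(a_i)}$, and $\phi_i(M) = M$. Now $a_i$ centralizes $G_{i-1}$, so in fact $a_i \in M = \langle G_i, G_i^{a_i}\rangle$ by \descref{(P2)}; more to the point, the element $\beta_{i+1}(a_i) \in M_{i+1} \leqslant L_{i+1}$. Applying \descref{(P2)} to the index $i+1$, the copy $M_{i+1} \leqslant L_{i+1}$ satisfies $M_{i+1} = \langle \beta_{i+1}(G_{i+1}), \beta_{i+1}(G_{i+1})^{\beta_{i+1}(a_{i+1})}\rangle$ — but that is not quite what is needed. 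Instead I would argue directly: $\phi_i(L_i) = \langle \phi_i(M), \phi_i(M_i)\rangle = \langle M, M^{\beta_{i+1}(a_i)}\rangle$. By \descref{(P2)} with index $i+1$ applied to the identification $G_i = \beta_{i+1}(G_i)$ inside $L_{i+1}$... the cleanest route is to observe that since $\beta_{i+1}(a_i)$ centralizes $\beta_{i+1}(G_{i-1}) = G_{i-1} \leqslant G_i$, conjugating the subgroup $\beta_{i+1}(G_i) \leqslant M$ by $\beta_{i+1}(a_i)$ and using $M = \langle G_i, G_i^{\beta_{i+1}(a_i)}\rangle$? This is not literally \descref{(P2)}; rather, I expect one shows $\langle M, M^{\beta_{i+1}(a_i)}\rangle \ni \beta_{i+1}(a_i)$ and contains $M_{i+1}$ by \descref{(P2)}, hence equals $L_{i+1} = \langle M, M_{i+1}\rangle$. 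The key computation is that $\beta_{i+1}(a_i) \in \langle M, M^{\beta_{i+1}(a_i)}\rangle$: this is where the main subtlety lies. I would get it from the fact that $M^{\beta_{i+1}(a_i)}$ contains $\beta_{i+1}(G_i)^{\beta_{i+1}(a_i)} = \beta_{i+1}(G_i^{a_i})$ while $M$ contains $\beta_{i+1}(G_i)$, and then... actually the honest statement is that one needs $a_i$ expressible appropriately; I expect the intended argument uses \descref{(P2)} in the form $M = \langle G_i, G_i^{a_i}\rangle$ together with the observation that inside $L_{i+1}$ we have $G_i^{a_i} = G_i^{\beta_{i+1}(a_i)}$ because $a_i a_i^{-1}$... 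Let me just say: the step I expect to be the \emph{main obstacle} is verifying $\beta_{i+1}(a_i) \in \langle \phi_i(M), \phi_i(M_i)\rangle$, equivalently that $\langle M, M^{\beta_{i+1}(a_i)}\rangle = L_{i+1}$; this should follow by unwinding \descref{(P2)} applied at index $i+1$, using that $\beta_{i+1}$ conjugates the relation $M = \langle G_i, G_i^{a_i}\rangle$ into $L_{i+1}$ and that $G_i^{a_i}$ lands inside $M^{\beta_{i+1}(a_i)}$. Once surjectivity is in hand, part (iii) is essentially free: $L_{i+1} = \langle M, M_{i+1}\rangle$ by the presentation of $L_{i+1}$, and $M = \phi_i(M) \leqslant \phi_i(M_i)$? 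No — rather $M = \phi_i(M)$ and one notes $\phi_i(M) \subseteq \langle \phi_i(M_i), M_{i+1}\rangle$ because $M = \langle G_{i+1}, \dots\rangle$... the quickest form: since $\phi_i(M_i) = M^{\beta_{i+1}(a_i)}$ and $\beta_{i+1}(a_i) \in M_{i+1}$, we get $\langle \phi_i(M_i), M_{i+1}\rangle = \langle M^{\beta_{i+1}(a_i)}, M_{i+1}\rangle = \langle M, M_{i+1}\rangle^{?}$ — more carefully, $M_{i+1} \ni \beta_{i+1}(a_i)$ normalizes nothing in particular, but $\langle M^{\beta_{i+1}(a_i)}, M_{i+1}\rangle$ contains $\beta_{i+1}(a_i)$ and hence contains $M^{\beta_{i+1}(a_i)}$ conjugated back, i.e. $M$; so it contains $\langle M, M_{i+1}\rangle = L_{i+1}$, giving (iii).

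In summary, the architecture is: (1) invoke the universal property of the amalgam $L_i$, checking compatibility on $G_{i-1}$ via \descref{(P1)}; (2) uniqueness from generation; (3) injectivity of the two restrictions from the standard embedding of factors into an amalgamated product, reading off the images; (4) surjectivity from \descref{(P2)} after showing the conjugating element $\beta_{i+1}(a_i)$ lies in the image — the one genuinely delicate point; (5) deduce (iii) formally from (i) and the presentation of $L_{i+1}$. I would present steps (1)--(3) and (5) as routine and concentrate the written proof on step (4).
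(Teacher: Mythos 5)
Your overall architecture coincides with the paper's: existence and uniqueness via the universal property of the amalgam, with the compatibility check on $G_{i-1}$ reducing to (P1) (your parenthetical remark that $\beta_{i+1}(a_i)$ conjugates $G_{i-1}=\beta_{i+1}(G_{i-1})$ inside $M_{i+1}$ exactly as $a_i$ does in $M$ is precisely the computation the paper carries out); claim (i) read off from the definition; and claim (iii) obtained by conjugating $M$ by $\beta_{i+1}(a_i)\in M_{i+1}$. All of that is correct.

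The one place you do not actually close the argument is claim (ii), and the difficulty you flag there is a red herring. You try to establish $\beta_{i+1}(a_i)\in\langle M, M^{\beta_{i+1}(a_i)}\rangle$ as a prerequisite and call it ``the one genuinely delicate point,'' and along the way you take a wrong turn by invoking (P2) at index $i+1$ (which produces $G_{i+1}$ and $a_{i+1}$ and is indeed irrelevant). The correct move --- which you name in passing but never carry out --- is to push the \emph{index-$i$} instance of (P2), namely $M=\langle G_i, G_i^{a_i}\rangle$, through the isomorphism $\beta_{i+1}$: this gives $M_{i+1}=\langle\beta_{i+1}(G_i), \beta_{i+1}(G_i^{a_i})\rangle$. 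In $L_{i+1}$ one has $\beta_{i+1}(G_i)=G_i\leqslant M$ by the amalgamation defining $L_{i+1}$, and $\beta_{i+1}(G_i^{a_i})=\beta_{i+1}(G_i)^{\beta_{i+1}(a_i)}=G_i^{\beta_{i+1}(a_i)}\leqslant M^{\beta_{i+1}(a_i)}$. Hence $M_{i+1}\leqslant\langle M, M^{\beta_{i+1}(a_i)}\rangle=\langle\phi_i(M),\phi_i(M_i)\rangle$, and therefore $L_{i+1}=\langle M, M_{i+1}\rangle\leqslant\phi_i(L_i)$, which is surjectivity. In particular $\beta_{i+1}(a_i)$ lying in the image is a \emph{consequence} of this containment, not a prerequisite for it; there is no delicate step. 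With this short computation inserted in place of your hedged paragraph, your proof is complete and is essentially the paper's.
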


\begin{proof} By the universal property of amalgamated free products, to verify that the homomorphism $\phi_i$ satisfying \eqref{eq:phi_i-def} exists,
we just need to check that it is well-defined on the amalgamated subgroup $G_{i-1}=\beta_i(G_{i-1})$. So, suppose that $h=\beta_i(g) \in M_i$ for some $g \in G_{i-1}$.
Then, recalling that $g=\beta_{i+1}(g)$ in $L_{i+1}$ by definition and $g^{a_i}=g$ in $M$ by \descref{(P1)}, we get
$$\phi_i(h)=\beta_{i+1}(a_i)\beta_{i}^{-1}(h) \beta_{i+1}(a_i^{-1})=g^{\beta_{i+1}(a_i)}=\beta_{i+1} \left( g^{a_i}\right )=\beta_{i+1}(g)=g.$$
Thus $\phi_i(h)=g=\phi_i(g)$, as required. Evidently the homomorphism $\phi_i: L_i \to L_{i+1}$, satisfying \eqref{eq:phi_i-def}, is unique because $L_i$ is generated by $M$ and $M_i$.

Claim (i) follows immediately from the definition of $\phi_i$.
Now, the group $M_{i+1}$ is generated by $\beta_{i+1}(G_i)$ and $\beta_{i+1}(G_i^{a_i})$   by condition \descref{(P2)}, which implies that in $L_{i+1}$ one has
$$L_{i+1}=\langle M, \beta_{i+1}(G_i), \beta_{i+1}(G_i^{a_i}) \rangle= \langle M, M^{\beta_{i+1}(a_i)} \rangle =\langle \phi_i(M),\phi_i(M_i) \rangle = \phi_i(L_i),$$
yielding claim (ii). To prove claim (iii), notice that
$$L_{i+1}=\langle M,M_{i+1} \rangle= \langle M^{\beta_{i+1}(a_i)}, M_{i+1} \rangle =\langle \phi_i(M_i) ,M_{i+1} \rangle,$$
because $\beta_{i+1}(a_i) \in M_{i+1}$ and $\phi_i(M_i)=M^{\beta_{i+1}(a_i)}$ by claim (i).
\end{proof}

\begin{rem}
Since $G_{i-1} \leqslant G_i$, there is a `na\"\i ve' epimorphism $\kappa_i:L_i \to L_{i+1}$, which restricts to the identity map on $M$ and to the composition $\beta_{i+1}\circ \beta_i^{-1}:M_i \to M_{i+1}$ on $M_i$.
However, this is \emph{different} from the epimorphism $\phi_i:L_i \to L_{i+1}$ described above: for example, by claim (i) of Lemma  \ref{lem:fold_props},
$\phi_i$ sends both $M$ and $M_i$ to conjugates of $M$, while $\kappa_i(M_i)=M_{i+1}$. It is not difficult to see that these `na\"\i ve' epimorphisms are  actually useless for the purposes of this paper.
\end{rem}

\subsection{The limit group $L$ and property (FA)}\label{subsec:(FA)}
Let the sequence of groups $L_i$ and the epimorphisms $\phi_i:L_i \to L_{i+1}$, $i \in \N$, be as above.
For $1 \le i < j$, let $\phi_{ij}:L_i \to L_j$ denote the composition $\phi_{ij}:=\phi_{j-1} \circ\dots\circ \phi_i$ (thus $\phi_{i,i+1}=\phi_i$).
We also define $\phi_{ii}:L_i \to L_i$ to be the identity map.

The sequence of groups $L_i$, equipped  with the epimorphisms $\phi_{ij}$, forms a directed family which has a direct limit, denoted by $L$.
This means that for each $i \in \N$ there is an epimorphism $\psi_i:L_i \to L$ such that
\begin{equation}\label{eq:compos-phi}
\psi_{j} \circ \phi_{ij}=\psi_i \mbox{ whenever }1 \le i \le j .
\end{equation}

In this section we will show that $L$ has property (FA), provided the same holds for $M$.
So, assume that, in addition to \descref{(P1)} and \descref{(P2)}, the group $M$ satisfies
\begin{itemize}
\descitem{(P3)}  \normalfont $M$ has property (FA).
\end{itemize}

\begin{lemma}\label{lem:(FA)} Suppose that a finitely generated group $M$, a strictly ascending sequence of its subgroups
$G_0 < G_1 <  \dots$ and elements $a_1,a_2,\dots \in M$ satisfy conditions \descref{(P1)}-\descref{(P3)}.
Then the limit group $L$ defined above has property (FA).
\end{lemma}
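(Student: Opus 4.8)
The plan is to show that any action of $L$ on a simplicial tree $X$ (by isometries, without inversions) has a global fixed point, by pulling the action back along the epimorphisms $\psi_i : L_i \to L$ and analyzing the induced actions of the amalgamated products $L_i$. First I would suppose, for contradiction, that $L$ acts on a tree $X$ without a global fixed point. Since $L$ is finitely generated (being a quotient of the finitely generated group $L_1$, which is generated by the finitely generated $M$ together with one extra copy $M_1$), this action is non-trivial in the strong sense that $L$ does not stabilize any end either, so by standard Bass–Serre theory there is an $L$-invariant subtree on which the action is minimal; replace $X$ by it. Composing with $\psi_i$, each $L_i = M \ast_{G_{i-1} = \beta_i(G_{i-1})} M_i$ acts on $X$, and the restriction to the vertex group $M \leqslant L_i$ has a global fixed point because $M$ has property (FA) by \descref{(P3)}; similarly $M_i \cong M$ fixes a point.

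The key observation is how the fixed-point sets of the two copies of $M$ inside $L_i$ move as $i$ grows. Write $F \subseteq X$ for the (non-empty) fixed-point set of $M = \psi_i(M) = \psi_{i+1}(M)$ — note this subtree is the \emph{same} for all $i$ because $\phi_i|_M = \mathrm{id}_M$, so the point fixed by $M$ is canonical in $L$. Let $F_i$ be the fixed-point set of $\psi_i(M_i)$. By Lemma \ref{lem:fold_props}(i), $\phi_i(M_i) = M^{\beta_{i+1}(a_i)}$, so $\psi_i(M_i) = \psi_{i+1}(M_i)$ is conjugate in $L$ to $M$ by $\psi_{i+1}(\beta_{i+1}(a_i))$; hence $F_i = \psi_{i+1}(\beta_{i+1}(a_i)) \cdot F$. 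Since $M$ and $M_i$ generate $L_i$ and $\psi_i$ is onto, the subtrees $F$ and $F_i$ cannot be disjoint: if they were, the bridge between them would yield an $L$-invariant line or the midpoint of the bridge would be fixed by all of $\langle M, M_i\rangle = L_i$, whence by $L$, contradicting non-triviality. Therefore $F \cap F_i \neq \emptyset$. Now I would exploit property \descref{(P2)}: $M = \langle G_i, G_i^{a_i}\rangle$, and $G_i \leqslant M$ fixes $F$ pointwise while $G_i^{a_i}$ fixes $F_i = \psi_{i+1}(\beta_{i+1}(a_i)) F$; but also $\psi_{i+1}(\beta_{i+1}(G_i))$ fixes $F$ and $\psi_{i+1}(\beta_{i+1}(a_i))$ conjugates it onto a subgroup fixing $F_i$. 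The containments $G_0 < G_1 < \cdots$ and the identification of $\beta_{i+1}(G_i)$ with $G_i$ inside $L_{i+1}$ should force $F \cap F_i$ to be fixed by larger and larger subgroups, and in the limit by all of $L$.

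The cleanest way to finish, which I expect to be the technical heart, is to track a single point: let $v$ be a vertex of $X$ fixed by $M$ (so $v \in F$). I claim $v$ is fixed by $\psi_i(L_i)$ for every $i$, hence by $L = \psi_i(L_i)$, giving the contradiction. For this it suffices to show $\psi_i(M_i)$ fixes $v$ as well, i.e.\ $v \in F_i$. Here is where I would chain the conditions: $\psi_{i+1}(\beta_{i+1}(a_i))$ centralizes $\psi_{i+1}(\beta_{i+1}(G_{i-1})) = \psi_{i+1}(G_{i-1})$ by \descref{(P1)} (transported through $\beta_{i+1}$), and this subgroup fixes $v$; so $\psi_{i+1}(\beta_{i+1}(a_i))$ maps $\Fix(\psi_{i+1}(G_{i-1}))$ to itself, keeping $a_i \cdot v$ in that fixed set — and by an inductive argument on $i$, using that $G_{i-1}$ already fixes the relevant arc from the previous stage, one pins down that $a_i$ must fix $v$ itself, so $F_i \ni v$. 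The main obstacle is making this induction rigorous: one must verify that the fixed-point sets stabilize appropriately, i.e.\ that passing from stage $i$ to stage $i+1$ does not move the canonical fixed point of $M$, and that the conjugating elements $\beta_{i+1}(a_i)$ genuinely fix it rather than merely normalizing a fixed subtree. I would handle this by working inside $L_{i+1}$ first — showing $\phi_i(L_i) \le L_{i+1}$ fixes a point whenever the two copies $M, M_i$ do, via the ping-pong/bridge argument above applied to the action of $L_{i+1}$ pulled back from $L$ — and then letting $i \to \infty$, using that $L = \varinjlim L_i$ and the compatibility \eqref{eq:compos-phi} to conclude that the common fixed point survives to $L$.
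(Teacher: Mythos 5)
Your proposal assembles the right ingredients (\descref{(P1)}, \descref{(P2)}, the fact that $\phi_i(M_i)=M^{\beta_{i+1}(a_i)}$, and the identification of $\beta_{i+1}(G_i)$ with $G_i$ in $L_{i+1}$), but the two steps on which everything hinges are not justified, and the first of them is actually argued incorrectly. You claim that $F=\Fix(\overline M)$ and $F_i=\Fix(\overline M_i)$ cannot be disjoint because otherwise ``the midpoint of the bridge would be fixed by all of $\langle M,M_i\rangle=L_i$.'' Serre's lemma says the opposite: if two elliptic subgroups have disjoint fixed-point sets, then the subgroup they generate contains a hyperbolic element and fixes \emph{nothing} -- which is perfectly consistent with your contradiction hypothesis that $L$ has no global fixed point. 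Indeed, for the action of $L_i$ on its own Bass--Serre tree $T_i$ the fixed-point sets of the two vertex groups \emph{are} disjoint, so no soft argument can rule this out; establishing that the fixed-point sets eventually meet is precisely the content of the lemma, not a preliminary observation. Your fallback -- that ``an inductive argument on $i$'' pins down that $\psi_{i+1}(\beta_{i+1}(a_i))$ fixes the chosen vertex $v\in F$ -- is asserted without any mechanism; knowing that $\beta_{i+1}(a_i)$ normalizes $\Fix(\psi_{i+1}(G_{i-1}))$ only tells you it permutes that (typically large) subtree, not that it fixes $v$. (A minor additional slip: a finitely generated group acting without a global fixed point may well fix an end, so your ``strong non-triviality'' claim is false, though it is not needed.)

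The missing idea is a minimization argument that exploits the simpliciality of the tree together with the interaction between \emph{two consecutive} stages. The paper fixes a vertex $u\in\Fix(\overline M)$ and chooses $i$ minimizing the integer $\d_S(u,\Phi_i)$, where $\Phi_i=\Fix(\overline M_i)$. If $u\in\Phi_i$ one is done since $\overline M$ and $\overline M_i$ generate $L$. Otherwise, let $v$ be the point of $\Phi_i$ closest to $u$ and pick $w\in\Phi_{i+1}$. The subgroup $\psi_{i+1}(G_i)$ fixes $[u,w]$ (as $G_i=M\cap M_{i+1}$ in $L_{i+1}$), while the image of $\beta_{i+1}(G_i^{a_i})\leqslant M_{i+1}\cap\phi_i(M_i)$ fixes $[w,v]$; hence the median of $u,v,w$ is fixed by both, and by \descref{(P2)} these generate $\overline M_{i+1}$. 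So the median lies in $\Phi_{i+1}\cap[u,v]$, and minimality of $i$ forces it to equal $v$. Then $v\in\Phi_i\cap\Phi_{i+1}$, and $L=\langle\overline M_i,\overline M_{i+1}\rangle$ by Lemma \ref{lem:fold_props}(iii), so $v$ is a global fixed point. Note that the fixed point produced this way need not lie in $\Fix(\overline M)$ at all, which is another reason your plan of proving that a vertex $v\in F$ is fixed by every $\psi_i(L_i)$ cannot work as stated.
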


\begin{proof} Assume that $L$ acts simplicially without edge inversions on a simplicial tree $S$.
Let $\overline{M}:=\psi_1(M) \leqslant L$ and $\overline{M}_i:=\psi_i(M_i) \leqslant L$, $i \in \N$. For any subgroup $H \leqslant L$, $\Fix(H)$ will denote the set of points in
$S$ fixed by all elements of $H$.

By \descref{(P3)}, there is a vertex $u \in \Fix(\overline{M})$, and for each $i \in \N$ the fixed point set
$\Phi_i:=\Fix(\overline{M}_i)$ is a non-empty subtree of $S$. Since the tree $S$ is simplicial, we can choose $i \in \N$ so that $\d_S(u, \Phi_i)$ is minimal, where $\d_S$ denotes the standard simplicial metric on $S$.

If $u \in \Phi_i$ then it is fixed by both $\overline{M}$ and $\overline{M}_i$. But $L$ is generated by these two subgroups, as $L_i =\langle M, M_i\rangle$ and so
$$L=\psi_i(L_i)=\langle \psi_i(M), \psi_i(M_i) \rangle = \left\langle \psi_i(\phi_{1i}(M)), \overline{M}_i \right\rangle =\left\langle \overline{M},\overline{M}_i \right\rangle,$$
where we used \eqref{eq:compos-phi} together with claim (i) of Lemma \ref{lem:fold_props}. Hence $u \in \Fix(L)$.

Thus, we can further assume that $\d_S(u, \Phi_i)$ is a positive integer. Let $v \in \Phi_i$ be the vertex closest to $u$ and choose any vertex $w \in \Phi_{i+1}$.
Clearly the geodesic segment $[u,w]$ is fixed by $\psi_{i+1}(G_i)=\psi_{i+1}(\beta_{i+1}(G_i))$, as $G_i=\beta_{i+1}(G_i)=M \cap M_{i+1}$ in $L_{i+1}$.
On the other hand, $\beta_{i+1}(G_i^{a_i})=G_i^{\beta_{i+1}(a_i)} \leqslant M_{i+1} \cap \phi_i(M_i)$ in $L_{i+1}$, which implies that the entire segment $[w,v]$ is fixed by the image of $\beta_{i+1}(G_i^{a_i})$ in $L$.

Since $S$ is a simplicial tree, the intersection of the geodesic segments $[u,v]$, $[u,w]$ and $[w,v]$ is a single vertex $x$ of $S$, which, by the above argument,
must be fixed by both $\psi_{i+1}(\beta_{i+1}(G_i))$ and
$\psi_{i+1}(\beta_{i+1}(G_i^{a_i}))$. But the latter two subgroups generate $\overline{M}_{i+1}=\psi_{i+1}(M_{i+1})$ by \descref{(P2)}.
Thus $x \in \Phi_{i+1}$. Recalling that  $x \in [u,v]$, the choice of $i$ and $v$ implies that $x=v$.

It follows that $v \in \Phi_{i} \cap \Phi_{i+1}$. From this we can conclude that $v \in \Fix(L)$, as $L$ is generated by $\overline{M}_i$ and $\overline{M}_{i+1}$. Indeed, the
latter can be derived from claim (iii) of Lemma~\ref{lem:fold_props} and \eqref{eq:compos-phi}, as
$$L=\psi_{i+1}(L_{i+1})= \psi_{i+1} \left(\langle \phi_i(M_i),M_{i+1}\rangle \right)=\left\langle \psi_i(M_i), \psi_{i+1}(M_{i+1})\right\rangle=\left\langle \overline{M}_i, \overline{M}_{i+1}\right\rangle.$$
Therefore we have shown that any simplicial action without edge inversions of $L$ on a simplicial tree $S$ has a global fixed point, which means that $L$ has property (FA).
\end{proof}

\subsection{Using Thompson's group $V$ as $M$}\label{subsec:V}  In this subsection we will explain that one can take $M$ to be  Thompson's group $V$. Recall (see \cite{CFP}) that $V$ is
the group of all piecewise linear right continuous self-bijections of the interval $[0,1)$, mapping dyadic rationals to themselves,
which are differentiable in all but finitely many dyadic rational numbers and such that at every interval, where the function is linear, its derivative is a power of $2$.

It is well-known that $V$ is finitely generated and even finitely presented \cite{CFP}. The fact that $V$ has property (FA) is  proved in \cite[Thm. 4.4]{Farley},
thus \descref{(P3)} holds for $M=V$. For each $i=0,1,2, \dots$, let
$G_i:=\St_V([0,1/2^{i+1}))$ be the pointwise stabilizer of the interval $[0,1/2^{i+1})$ in $V$, i.e.,
$$G_i=\{f \in V \mid f(x)=x \mbox{ whenever } x \in [0,1/2^{i+1}) \} \leqslant V.$$
Thus $G_0=\St_V ([0,1/2))$, $G_1=\St_V([0,1/4))$, etc.
Evidently $G_0<G_1<G_2< \dots$ in $V$. Finally, for each $i \in\N$, we pick the function $a_i:[0,1) \to [0,1)$  according to the formula
\begin{equation*}\displaystyle
a_i(x):= \left\{
\begin{array}{ll}
  x+\frac{1}{2^{i+1}} & \mbox{if } x \in \left[0,\frac{1}{2^{i+1}}\right) \\
 x-\frac{1}{2^{i+1}} & \mbox{if } x \in \left[\frac{1}{2^{i+1}},\frac{1}{2^i}\right) \\
  x & \mbox{if } x \in \left[\frac{1}{2^i},1\right)
\end{array} \right. ,
\end{equation*}
in other words, $a_i$ simply permutes the intervals $[0,1/2^{i+1})$ and $[1/2^{i+1},1/2^i)$. Clearly $a_i \in V$ and $a_i$ commutes with any element from $G_{i-1}=\St_V([0,1/2^i))$ in $V$, for all $i \in \N$.
Thus \descref{(P1)} is satisfied. Observe that $G_i=\St_V([0,1/2^{i+1}))$ and $G_i^{a_i}=\St_V([1/2^{i+1},1/2^i))$ in $V$, so to verify \descref{(P2)} it is enough to show that $V$ is generated by
$\St_V([0,1/2^{i+1}))$ and $\St_V([1/2^{i+1},1/2^i))$ for all $i \in \N$.
The latter is a straightforward exercise. Indeed, recall that $V$ is generated by four elements $A,B,C$ and $\pi_0$ (see \cite{CFP}), defined as follows:

\begin{equation*}
A(x)=\left\{ \begin{array}{ll}
         \frac{x}2 & \mbox{if } x \in \left[0,\frac{1}{2}\right)\\
         x-\frac{1}{4} & \mbox{if } x \in \left[\frac12,\frac{3}{4}\right)\\
         2x-1 &   \mbox{if }x \in \left[\frac34,1\right)
         \end{array} \right. , \qquad
B(x)=\left\{ \begin{array}{ll}
         x & \mbox{if } x \in \left[0,\frac12\right)\\
         \frac{x}2+\frac{1}{4} & \mbox{if }x \in \left[\frac12,\frac{3}{4}\right)\\
         x-\frac{1}{8} & \mbox{if } x \in \left[\frac{3}{4},\frac{7}{8}\right)\\
         2x-1 & \mbox{if } x \in \left[\frac78,1\right)
         \end{array} \right. ,
\end{equation*}

\begin{equation*}
C(x)=\left\{ \begin{array}{ll}
         \frac{x}2+\frac{3}{4} & \mbox{if } x \in \left[0,\frac12\right)\\
         2x-1 & \mbox{if } x \in \left[\frac12,\frac{3}{4}\right)\\
         x-\frac{1}{4} & \mbox{if } x \in \left[\frac34,1\right)
         \end{array} \right. , \qquad
\pi_0(x)=\left\{ \begin{array}{ll}
         \frac{x}2+\frac{1}{2} & \mbox{if } x \in \left[0,\frac12\right)\\
         2x-1 & \mbox{if } x \in \left[\frac12,\frac{3}{4}\right)\\
         x & \mbox{if } x \in \left[\frac34,1\right)
         \end{array} \right. .
\end{equation*}

Fix any $i \in \N$.
One immediately notices that $B \in \St_V([0,\frac12)) \leqslant G_i$, and $\pi_0 \in \St_V([\frac34,1))$.
Clearly there exists an element $D_1 \in \St_V\left([0,\frac{1}{2^{i+1}})\right)=G_i$ such that $D_1\left([\frac34,1)\right)=[\frac{1}{2^{i+1}},\frac{1}{2^{i}})$,
hence $\St_V\left([\frac34,1)\right)=D_1^{-1} \, \St_V\left([\frac1{2^{i+1}},\frac1{2^i})\right)  D_1 =D_1 ^{-1}G_i^{a_i} D_1$. Therefore $B,\pi_0 \in \langle G_i,G_i^{a_i} \rangle$.

We can also observe that $B^{-1}A \in \St_V\left([\frac78,1)\right)$ and $\pi_0^{-1}C \in \St_V\left([\frac12,\frac34)\right)$. So, arguing as above we can find elements
$D_2,D_3 \in \St_V\left([0,\frac{1}{2^{i+1}})\right)=G_i$ such that $B^{-1} A \in D_2^{-1} G_i^{a_i} D_2 $ and $\pi_0^{-1} C \in D_3^{-1} G_i^{a_i} D_3$, which yields
that $A,C \in \langle G_i,G_i^{a_i} \rangle$. Thus $V= \langle G_i,G_i^{a_i} \rangle$  for any $i \in \N$, as claimed.
Hence the group $M=V$ satisfies properties \descref{(P1)}-\descref{(P3)} above.

\subsection{Proof of the weaker theorem}
\begin{proof}[Proof of Theorem \ref{thm:short}] Let $M$, the sequence of its subgroups $G_0<G_1< \dots$, and the elements $a_i \in M$, $i \in \N$, be as in Subsection \ref{subsec:V}.
Then we can define the groups $L_i$ and the homomorphisms $\phi_i:L_i \to L_{i+1}$ as above, and we will let $L$ be the direct limit of the sequence $(L_i,\phi_i)_{i \in \N}$.
It follows that there is a epimorphism $\psi_1: L_1  \to L$, implying, in particular,  that $L$ is finitely generated. Moreover, $L$ has property (FA) by Lemma \ref{lem:(FA)}.

Recall that, by definition, each $L_i$ splits non-trivially as an amalgamated free product, hence it admits a non-trivial action on the associated simplicial Bass-Serre tree $T_i$ (cf. \cite[I.4.1, Thm. 7]{Serre}),
$i \in \N$. In particular, $L_i$ does not have property (F$\R$) for any $i \in \N$. Since property (F$\R$) is open in the topology of marked groups (see \cite[Thm. 4.7]{Stalder} or \cite[Sec. 3.8.B]{Gromov-random}),
its complement is closed, and so the group $L$ also does not have (F$\R$), as a direct limit of groups without this property (because direct limits are limits in the topology of marked groups).

An alternative way to prove that $L$ has a non-trivial action on some $\R$-tree would be to use an earlier result of Culler and Morgan \cite{Cul-Mor} about compactness of the space of non-trivial
projective length functions for actions of a finitely generated group on $\R$-trees. Indeed, since $L_i$ is an epimorphic image of $L_1$, for each $i \in \N$ we get a non-trivial action of $L_1$ on $T_i$
(which factors through the action of $L_i$).
The set of such actions determines a sequence in the space ${\rm PLF}(L_1)$, of \emph{non-trivial projective length functions} of $L_1$ on $\R$-trees -- see \cite{Cul-Mor}. In \cite[Thm.~4.5]{Cul-Mor}
it is shown that the space ${\rm PLF}(L_1)$, equipped with a natural topology, is compact, which implies that the above sequence has a subsequence converging to a non-trivial (projective) length function $\lambda:L_1 \to \R$.
It is easy to see that $\lambda$ determines a non-trivial (projective) length function $\overline\lambda:L \to \R$ of $L$ (defined by $\overline\lambda(\psi_1(g)):=\lambda(g)$ for any $g \in L_1$),
yielding a non-trivial $L$-action on some $\R$-tree.

The final assertion of the theorem, is a consequence of a standard argument, showing that every finitely presented group $P$ which maps onto the direct limit $L$ must actually map onto some $L_i$
(see \cite[Lemma 3.1]{Cor-Kar}). Hence $P$ will act non-trivially on the simplicial tree $T_i$, and so it does not have (FA).
\end{proof}

\section{Preliminaries}
The rest of this paper is devoted to proving Theorem \ref{thm:main}. In this section we will recall some theory and terminology that will be used later on.

\subsection{Notation}
If $G$ is a group acting on a set $X$ and $Y \subset X$, then $\St_{G}(Y) \leqslant G$ will denote the \emph{pointwise stabilizer} of $Y$ in $G$.
If $e$ is an edge in a simplicial tree $S$, then $e_-$ and $e_+$ will denote the two endpoints of $e$ in $S$.

\subsection{Lambda-trees} 
Let $\Lambda$ be an ordered abelian group. A set $X$, equipped with a function $\d:X \times X \to \Lambda$, is a \emph{$\Lambda$-metric space}, if $\d$ enjoys the standard axioms of a metric
(it is positive definite, symmetric and satisfies the triangle inequality). In this paper the group $\Lambda$ will always be a subgroup of $\R$ (under addition).

Given a $\Lambda$-metric space $(X,\d)$,  a \emph{geodesic segment} in $X$ is a subset isometric to an interval
$[\lambda,\mu]_\Lambda:=\{\upsilon \in \Lambda \mid \lambda \le \upsilon \le \mu\}$ for some $\lambda,\mu \in \Lambda$, $\lambda \le \mu$.
A geodesic segment  in $X$ is an \emph{arc} if it is not degenerate (i.e., its endpoints are distinct).

$(X,d)$ is \emph{geodesic} if for any two points $x,y \in X$ there exists a geodesic segment $[x,y]$ joining them.
Intuitively, a geodesic $\Lambda$-metric space is a $\Lambda$-tree if it does not contain non-trivial simple loops.
Formally, $(X,\d)$ is a \emph{$\Lambda$-tree} if it is geodesic, the intersection of any two geodesic segments with a common endpoint is a geodesic segment in $X$,
and the union of any two geodesic segments which only share a single endpoint is a geodesic segment (see \cite{Chiswell}).

Standard examples of $\Lambda$-trees are $\Z$-trees (which are in one-to-one correspondence with simplicial trees) and
$\R$-trees (which can be characterized as connected metric spaces that are $0$-hyperbolic in the sense of Gromov -- see \cite[Lemma 4.13]{Chiswell}).
Given a positive number $r \in \R$, any simplicial tree $S$ can be made into an $\R$-tree by proclaiming that every edge is isometric to the segment $[0,r]$
(the vertex set of $S$ then becomes an $\langle r \rangle$-tree, where $\langle r \rangle$ denotes the cyclic subgroup of $\R$ generated by $r$). $\R$-trees that can be obtained this way are called \emph{simplicial $\R$-trees}.
If $r=1$ the $\R$-tree obtained from $S$ is actually the standard geometric  realization of $S$. However, further on we will also be using $r=1/2^i$ for some $i \in \N$.

As explained in the Introduction the $\R$-tree $T$, on which the limit group $L$ acts non-trivially, will be constructed as a limit of some simplicial $\R$-trees $T_i$.
However the morphism from $T_i$ to $T_{i+1}$ will not be simplicial, as we perform edge subdivision. Therefore, we will use a more general notion of a morphism, suggested by Gillet and Shalen in \cite{Gillet-Shalen}.
Given two $\Lambda$-trees $S'$ and $S''$, a map $f:S' \to S''$ is a \emph{morphism} if for any two points $a,b \in S'$
there are points $a=x_0,x_1,\dots,x_n=b$ such that the geodesic segment $[a,b]$ is subdivided into the union of geodesic segments $[x_0,x_1] \cup \dots \cup [x_{n-1},x_n]$
and the restriction of $f$ to $[x_{i-1},x_i]$ is an isometric embedding of $\Lambda$-metric spaces, for every $i=1,\dots,n$ (see \cite[Sec. 1.7]{Gillet-Shalen}). This notion of a morphism allows to
subdivide edges and fold edges together, which is what we will employ later.

Since we will be interested in (isometric) group actions on trees, it is convenient to operate in the \emph{category of $\Lambda$-trees with symmetry}, which was also introduced in \cite{Gillet-Shalen}. The objects in
this category are pairs $(H,S)$, where $S$ is a $\Lambda$-tree and $H$ is a group with a fixed action on $S$ by isometries
(in the case when $S$ is a simplicial tree, we will also require that the action is simplicial and without edge inversions). Given two objects $(H',S')$ and $(H'',S'')$ in the
category of $\Lambda$-trees with symmetry, a \emph{morphism} between these objects is a pair $(\phi, \varphi)$, where $\phi:H' \to H''$ is a group homomorphism and $\varphi: S' \to S''$ is a morphism of $\Lambda$-trees
which is equivariant with respect to $\phi$, i.e.,  $\phi(h)\circ \varphi(s)=\varphi(h \circ s)$ for all $h \in H'$ and $s \in S'$.

A natural source of morphisms in the category of simplicial trees with symmetry comes from \emph{morphisms of graphs of groups}, which were introduced and
studied by Bass in \cite{Bass-covering}. Given two finite graphs of groups $\G'$ and $\G''$, a morphism $\G' \to \G''$ consists of a simplicial map between the underlying simplicial graphs together
with the collection of homomorphisms between the vertex and edge groups
of $\G'$ and (possibly conjugates of) the vertex and edge groups of $\G''$, satisfying natural compatibility conditions. We refer the reader to \cite[Sec.~2]{Bass-covering} for a
formal definition. Let $H'$, $H''$ be the fundamental groups and let $T'$, $T''$ be the associated Bass-Serre trees of $\G'$, $\G''$ respectively. In \cite[Prop. 2.4]{Bass-covering} Bass proves that
any morphism from $\G'$ to $\G''$ gives a homomorphism $\phi:H' \to H''$ and a morphism of simplicial trees $\f:T' \to T''$, which is equivariant with respect to $\phi$.
Clearly scaling the simplicial metrics on $T'$ and $T''$ by the same real number $r>0$ does not affect these maps, so if one views $T'$ and $T''$ as simplicial $\R$-trees, then
$(\phi,\f)$ becomes a morphism from $(H',T')$ to $(H'',T'')$ in the category of $\R$-trees with symmetry.

\subsection{Strong limits}\label{subsec:strong_lim}
Suppose that we are given a sequence $(T_i)_{i \in \N}$ of $\Lambda$-trees together with $\Lambda$-tree morphisms $\f_i:T_i \to T_{i+1}$, $i \in \N$. Then we can form a direct system
$(T_i,\f_{ij})$ of $\Lambda$-trees, by setting $\f_{ij}:=\f_{j-1} \circ \dots \circ\f_i: T_i \to T_j$, whenever $ 1 \le i<j$.

Let $\d_i$ denote the ($\Lambda$-) metric on $T_i$, $i \in \N$.
Following \cite[Sec. 1.20]{Gillet-Shalen}, we will say that the sequence $(T_i,\d_i,\f_{i})_{i \in \N}$ \emph{converges strongly} if for any $l \in \N$ and any two points $x,y$ of $T_l$ there exists $k \in \N$ such that
$\d_j(\f_{lj}(s),\f_{lj}(t))=\d_k(\f_{lk}(s),\f_{lk}(t))$ for any $s,t \in [x,y]$
and all $j \ge k$. In particular, this implies that for all $x,y \in T_l$ the sequence of distances $\d_j(\f_{lj}(x),\f_{lj}(y)) \in \Lambda$, $j \ge l$,
eventually stabilizes (since $\Lambda$-tree morphisms are always distance-decreasing the latter condition is actually sufficient for the sequence to converge strongly).

Assuming that each map $\f_i:T_i \to T_{i+1}$ is surjective and the sequence $(T_i,\f_{i})_{i\in\N}$ converges strongly, one can construct the limit $\Lambda$-metric space $(T,\d)$ for this sequence
as follows (see \cite[Sec. 1.21]{Gillet-Shalen}). Define the pseudometric $\hat\d$ on $T_1$ by $\hat\d(x,y):=\lim_{i \to \infty}\d_i\bigl(\f_{1i}(x),\f_{1i}(y)\bigr)$ for all $x,y \in T_1$.
We now set $T$ to be the quotient of $T_1$ by the equivalence relation $\sim$, where $x \sim y$ if and only if $\hat\d(x,y)=0$.

In  \cite[Prop. 1.22 and 1.27]{Gillet-Shalen} Gillet and Shalen proved that  the function $\d:T \times T \to \Lambda$, given by
$\d(\bar x,\bar y):=\hat\d(x,y)$ for any choice $x, y \in T_1$ representing the equivalence classes $\bar x,\bar y \in T$, is a $\Lambda$-metric on $T$ and $(T,\d)$ is a $\Lambda$-tree.
In the case when $\Lambda=\R$ this can be easily shown using the $0$-hyperbolicity characterization, mentioned above. We will say that $(T,\d)$ is the \emph{limit $\Lambda$-tree} for the sequence  $(T_i,\d_i,\f_i)_{i \in \N}$.

For every $i \in \N$ we have a natural map of metric spaces $\theta_i:(T_i,\d_i) \to (T,\d)$ defined as follows. The map $\theta_1$ sends $x \in T_1$ to its equivalence class under $\sim$.
And if $i>1$ then for any $y \in T_i$,
choose an arbitrary $x \in T_1$ such that $y=\f_{1i}(x)$ and set $\theta_i(y):=\theta_1(x)$ (this gives a well-defined map since for any other point $x' \in T_1$,
with $\f_{1i}(x')=y$, one has $x \sim x'$). In \cite[Prop. 1.22]{Gillet-Shalen} it is shown that these maps $\theta_i:T_i \to T$ are actually morphisms of $\Lambda$-trees.

\section{Construction of the morphisms}\label{sec:folding_seq}
The desired pair $(L,T)$ from Theorem \ref{thm:main} will be constructed as a direct limit of a sequence $(L_i,T_i)_{i \in \N}$, where $L_i$ is a group acting non-trivially by isometries (and without inversions)
on a simplicial $\R$-tree $T_i$ in the category of $\R$-trees with symmetry. In fact, as we will see later, the groups $L_i$ will be the amalgams from Subsection \ref{subsec:L_i}, for suitable choice of the group $M$,
and $T_i$ will be the corresponding Bass-Serre trees.
In order to show that for each $i \in \N$ there is a natural morphism between the pairs $(L_i,T_i)$ and $(L_{i+1},T_{i+1})$, we will look at the corresponding graphs of groups.
Namely, we will construct a sequence of graphs of groups $\G_i$ so that $L_i$ will be the fundamental group of $\G_i$ and $T_i$ will be the geometric realization of the
corresponding Bass-Serre tree (where the standard simplicial metric is appropriately rescaled).

As before, we will need an auxiliary  finitely generated group $M$ which contains a strictly ascending sequence of subgroups $G_0<G_1<G_2< \dots$ together with elements $a_i \in M$,
$i \in \N$,  satisfying properties \descref{(P1)} and \descref{(P2)} from Subsection \ref{subsec:L_i}.
Again, for each $i \in \N$, we take a copy $M_i$ of $M$, and fix an isomorphism $\beta_i:M \to M_i$.

Let $\G_i$ be the graph of groups with one edge, where the two vertex groups are $M$ and $M_i$ and the edge group
is $G_{i-1}$, equipped with the natural inclusion into $M$, so that the embedding of this edge group into $M_i$ is given by the restriction of $\beta_i$ to $G_{i-1}$ (see the first line of Figure \ref{fig:folding_seq}).
Let $L_i$ be the fundamental group of $\G_i$ and let $T_i$ be the corresponding Bass-Serre tree.
Then $L_i$ is naturally isomorphic to the amalgamated free product $M\ast_{G_{i-1}=\beta_i(G_{i-1})} M_i$ with presentation \eqref{eq:L_i-pres}, which was discussed in Subsection \ref{subsec:L_i}.
Each tree $T_i$ will be viewed as a simplicial $\R$-tree, equipped with a natural metric $\d_i$ in which every edge is isometric to the interval
$[1,1/2^{i-1}]$ (i.e., the standard simplicial metric of $T_i$ is downscaled by $2^{i-1}$).

Now, let us describe the morphism $(\phi_i,\f_i): (L_i,T_i) \to (L_{i+1},T_{i+1})$ in the category of $\R$-trees with symmetry. This morphism is obtained via a composition of several intermediate morphisms, which we call steps.
The pictorial illustration of these steps, in terms of the respective graphs of groups, is given in Figure \ref{fig:folding_seq}. The morphism from the first step simply corresponds to edge subdivision in $T_i$.
The intermediate morphisms from the remaining steps will come from the morphisms between the corresponding graphs of groups (in the sense of Bass \cite{Bass-covering}).

\begin{figure}[ht]\begin{center}
\input{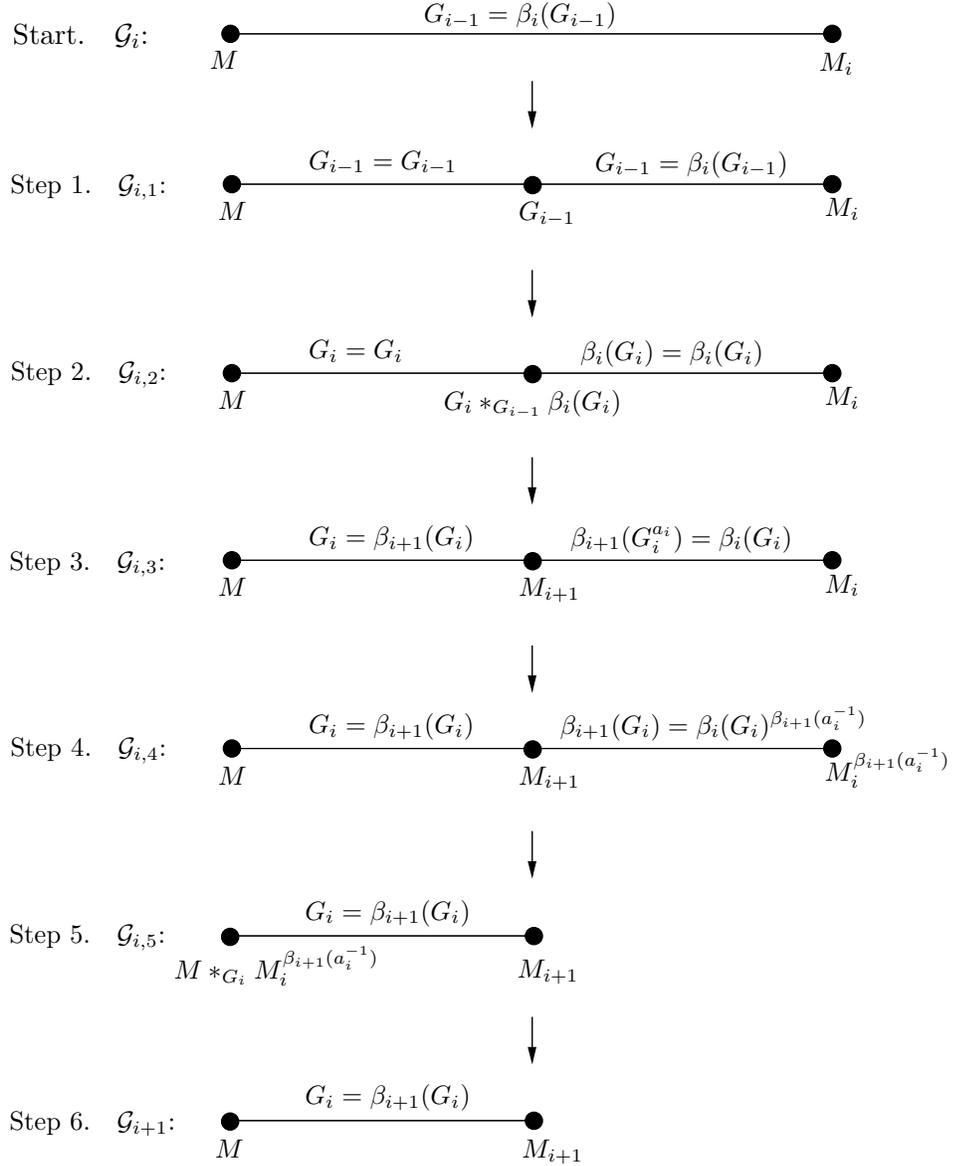}
\caption{\label{fig:folding_seq} The morphism from $\mathcal{G}_i$ to $\mathcal{G}_{i+1}$.}
\end{center}
\end{figure}

\medskip\noindent {\bf Step 1.} We start by inserting a new vertex with the group $G_{i-1}$ at the middle of the edge in $\G_i$ to obtain the graph of groups $\G_{i,1}$.
This means that the corresponding Bass-Serre tree $T_{i,1}$ is obtained from $T_i$
by subdividing all edges. Evidently the fundamental group $L_{i,1}$ of $\G_{i,1}$ is the same as before, i.e., it is equal to $L_i$. Strictly speaking, this does not give rise to a graph of groups morphism
from $\G_i$ to $\G_{i,1}$, as the induced map on the underlying graphs is not simplicial. However, clearly we do have a morphism $(L_i,T_i) \to (L_{i,1}, T_{i,1})$ in the category of simplicial $\R$-trees with symmetry,
where the edge length in $T_{i,1}$ is defined to be half of the edge length in $T_i$.

\medskip\noindent {\bf Step 2.} Clearly, the subgroup of $L_{i,1}$ generated by $G_i$ and $\beta_i(G_i)$ is isomorphic to the free amalgamated product $G_i\ast_{G_{i-1}=\beta_i(G_{i-1})} \beta_i(G_i)$.
To pass from $\G_{i,1}$ to $\G_{i,2}$, we apply a graph of groups morphism, which does not change the underlying graph,
sends the vertex groups $M$ and $M_i$ to themselves (identically) and naturally embeds the middle vertex group $G_i$ into the subgroup $\langle G_i,\beta_i(G_i) \rangle \leqslant L_i$.
It also sends the edge groups to the corresponding edge groups using the natural inclusions $G_{i-1} \hookrightarrow G_i$ and
$\beta_i(G_{i-1}) \hookrightarrow \beta_i(G_i)$.

It is not difficult to see that the Bass-Serre tree $T_{i,2}$ for $\G_{i,2}$ is obtained from $T_{i,1}$ by folding some edges together. In fact,
if the group $G_i$ is finitely generated, this morphism can be obtained as a composition of several Type IIA folds in the terminology of
Bestvina and Feighn -- see \cite[Sec. 2]{BF-complexity}. Recall (cf. \cite[Sec. I.4.1 and I.5.3]{Serre}) that vertices of the Bass-Serre tree $T_{i,1}$
correspond to left cosets of $M$, $M_i$ and $G_{i-1}$ and edges correspond to left cosets of $G_{i-1}$.
In these terms, the morphism from $T_{i,1}$ to $T_{i,2}$ can be described as follows: if $v$ is a vertex of $T_{i,1}$ corresponding to the coset $xM$, for some $x \in L_{i,1}$,
and $e$ is an edge adjacent to $v$, corresponding to the coset of $xG_{i-1}$, then $e$ is folded together with all the edges
adjacent to $v$ which come from the coset $xG_i$ of $G_i$
(such edges will correspond to the cosets of the form $xyG_{i-1}$, where $y$ runs over representatives of the cosets $G_i/G_{i-1}$);
similarly, edges adjacent to a vertex corresponding to a coset of $M_{i}$ are folded with the edges at that vertex corresponding to the same coset of
$\beta_i(G_i)$.

As before, the fundamental group $L_{i,2}$ of $\G_{i,2}$ is unchanged, i.e., it is naturally isomorphic to $L_i$ (the standard presentation of $L_{i,2}$ can be obtained from the presentation of $L_i$ by
applying a finite number of Tietze transformations -- see \cite[Sec. II.2]{L-S}).

\medskip\noindent {\bf Step 3.} The graph of groups $\G_{i,3}$ is obtained from $\G_{i,2}$ by applying a \emph{vertex morphism} (using the terminology of \cite{Dun-folding_seq}). The underlying graph stays the same and the maps
between the corresponding vertex and edge groups are natural isomorphisms/identities, except for the vertex groups in the middle, where the epimorphism
$$ G_i\ast_{G_{i-1}=\beta_i(G_{i-1})} \beta_i(G_i) \to M_{i+1}, $$ sends
$G_i$ to $\beta_{i+1}(G_i)\leqslant M_{i+1}$ (via the map $g \mapsto \beta_{i+1}(g)$ for all $g \in G_i$) and $\beta_i(G_i)$ to $\beta_{i+1}(G_i^{a_i})\leqslant M_{i+1}$
(via the map $\beta_i(g) \mapsto \beta_{i+1}(g^{a_i})$ for all $g \in G_i$).
Note that we used \descref{(P1)} together with the universal property of the amalgamated free products to conclude that these maps extend to a homomorphism between the middle vertex groups of $\G_{i,2}$ and $\G_{i,3}$.
The fact that this homomorphism is surjective follows from condition \descref{(P2)} above, as $M_{i+1}=\langle \beta_{i+1}(G_i),\beta_{i+1}(G_i^{a_i})\rangle$.

\medskip\noindent {\bf Step 4.} In this step, we keep the same group $L_{i,4}=L_{i,3}$ with the same action on the same tree $T_{i,4}=T_{i,3}$, but we choose a different fundamental domain for this action,
giving rise to the graph of groups $\G_{i,4}$. Again, this gives a graphs of groups morphism from $\G_{i,3}$ to $\G_{i,4}$, sending $M_i$ and the adjacent edge group $\beta_i(G_i)$ in $\G_{i,3}$ to the
conjugates of  $M_i^{\beta_{i+1}(a_i^{-1})}$ and the adjacent edge group $\beta_i(G_i)^{\beta_{i+1}(a_i^{-1})}$ in $\G_{i,4}$ by the element $\beta_{i+1}(a_i)$, which belongs to the vertex group
$M_{i+1}$ at the middle of $\G_{i,4}$. This step is only auxiliary, as it neither changes the group nor the tree on which it acts,
but it makes the description of the next step easier.

\medskip\noindent {\bf Step 5.} The graph of groups $\G_{i,5}$ consists of a single edge, where the `right' vertex group is $M_{i+1}$ and the `left' vertex group is the subgroup of $L_{i,4}$ generated by $M$ and
$M_i^{\beta_{i+1}(a_i^{-1})}$.
The morphism from $\G_{i,4}$ to $\G_{i,5}$ glues together  the two edges of the former.
The middle vertex group $M_{i+1}$ of $\G_{i,4}$ is mapped identically to the `right' vertex of $\G_{i,5}$.
The maps from the vertex groups $M$ and $M_i^{\beta_{i+1}(a_i^{-1})}$ of $\G_{i,4}$ to the `left' vertex group of $\G_{i,5}$ are the natural inclusions.
On the level of the Bass-Serre trees, $T_{i,5}$ is obtained from $T_{i,4}$ by applying an edge folding of Type IA
(see \cite[Sec. 2]{BF-complexity}). The fundamental group $L_{i,5}$, of $\G_{i,5}$ is not affected and coincides with $L_{i,4}$ (as before, one can verify this by applying Tietze transformations to the standard presentation of $L_{i,4}$). It is also important to note, that for any edge $e$ of $T_{i,4}$, it is only folded with
edges that have the same stabilizer, therefore the stabilizer of $e$ in $L_{i,4}$ is mapped identically to
the stabilizer of its image in $L_{i,5}$.

Now we need to observe that the left vertex group $\langle M, M_i^{\beta_{i+1}(a_i^{-1})}\rangle \leqslant L_{i,4}$ in $\G_{i,5}$ is naturally isomorphic to the amalgamated free product
$$M \ast_{G_i=\beta_i(G_i)^{\beta_{i+1}(a_i^{-1})}} M_i^{\beta_{i+1}(a_i^{-1})} =\langle M,  M_i^{\beta_{i+1}(a_i^{-1})} \mid g=\beta_i(g)^{\beta_{i+1}(a_i^{-1})}, \mbox{ for all } g \in G_i\rangle.$$
Indeed, this can be seen by looking at Step 4 on Figure \ref{fig:folding_seq}, which shows that
$L_{i,4}$ has the presentation $$\langle M,M_i^{\beta_{i+1}(a_i^{-1})},M_{i+1} \mid g=\beta_{i+1}(g)=\beta_i(g)^{\beta_{i+1}(a_i^{-1})}, \mbox{ for all } g \in G_i\rangle, $$
which is also a presentation of the double amalgamated free product:
\begin{equation}\label{eq:double_amalg}
\left( M \ast_{G_i=\beta_i(G_i)^{\beta_{i+1}(a_i^{-1})}} M_i^{\beta_{i+1}(a_i^{-1})} \right) \ast_{G_i=\beta_{i+1}(G_i)} M_{i+1}.
\end{equation}
Therefore $L_{i,4}$ is naturally isomorphic to the double amalgamated free product \eqref{eq:double_amalg}, implying that  the subgroup
generated by $M$ and $M_i^{\beta_{i+1}(a_i^{-1})}$ is naturally isomorphic to their free amalgam along $G_i=\beta_i(G_i)^{\beta_{i+1}(a_i^{-1})}$.

\medskip\noindent {\bf Step 6.} To perform the final step, observe that the `left' vertex group in $\G_{i,5}$ is
isomorphic to the double $M \ast_{G_i=G_i} M$, of $M$ along $G_i$. Therefore, this double retracts onto $M$ by identifying the second copy of $M$ with the first one. More precisely,
the map $$\eta_i: M \ast_{G_i=\beta_i(G_i)^{\beta_{i+1}(a_i^{-1})}} M_i^{\beta_{i+1}(a_i^{-1})} \to M$$ is defined by
\begin{equation}\label{eq:eta_i-1}
\eta_i(g)=g \mbox{ for all } g \in M, \mbox { and }
\end{equation}

\begin{equation}\label{eq:eta_i-2}
\eta_i\left( h^{\beta_{i+1}(a_i^{-1})} \right)=\beta_i^{-1}(h) \mbox{ for all } h \in M_i.
\end{equation}

To show that these maps indeed can be combined to the homomorphism from the amalgamated free product to $M$,
one has to check that the formulas \eqref{eq:eta_i-1} and \eqref{eq:eta_i-2} give the same result for any $g \in G_i$.
Indeed if $g \in G_i$ then $g=\beta_i(g)^{\beta_{i+1}(a_i^{-1})}$ in $L_{i,5}$, so, using \eqref{eq:eta_i-2}, one gets
$$\eta_i(g)=\eta_i\left( \beta_i(g)^{\beta_{i+1}(a_i^{-1})}\right)= \beta_i^{-1} \left( \beta_i(g) \right)=g, $$
which  agrees with \eqref{eq:eta_i-1}.

The above epimorphism from the `left' vertex of $\G_{i,5}$ to the `left' vertex of $\G_{i+1}$ allows to apply the corresponding vertex morphism to the graph of groups $\G_{i,5}$,
resulting in the graph of groups $\G_{i+1}$. For the `right' vertex groups and for the edge groups in these graphs of groups the corresponding maps are the natural identifications/isomorphisms.
Let $\tilde\eta_i: L_{i,5} \to L_{i+1}$ denote the induced map of the fundamental groups. Then the restriction of $\tilde\eta_i$ to $M_{i+1}$ is the identity map and its restriction to the `left' vertex group is $\eta_i$.
Therefore, as $\beta_{i+1}(a_i) \in M_{i+1}$, in $L_{i+1}$ we have
\begin{equation}\label{eq:eta_i-3}
\tilde\eta_i(h)= \beta_{i+1}(a_i) \eta_i\left(h^{\beta_{i+1}(a_i^{-1})}\right)\beta_{i+1}(a_i^{-1})=\beta_i^{-1}(h)^{\beta_{i+1}(a_i)} \mbox{ for all } h \in M_i.
\end{equation}

\medskip
Thus we have constructed a sequence of morphisms (in the category of $\R$-trees with symmetry), starting with the pair $(L_i,T_i)$ and ending with the pair $(L_{i+1},T_{i+1})$.
Let $(\phi_i,\f_i): (L_i,T_i) \to (L_{i+1},T_{i+1})$ be the composition of these morphisms. Evidently $\phi_i$ restricts to the identity map on $M$, and \eqref{eq:eta_i-3} shows that it maps each $h \in M_i$ to
$\beta_i^{-1}(h)^{\beta_{i+1}(a_i)}$. Therefore $\phi_i:L_i \to L_{i+1}$ obtained this way is the same as the epimorphism from Lemma~\ref{lem:fold_props} in Subsection \ref{subsec:homom}.
In particular, further we can use all the claims of Lemma~\ref{lem:fold_props}.

Let us summarize the main properties of the morphism $(\phi_i,\f_i): (L_i,T_i) \to (L_{i+1},T_{i+1})$ which will be used later:

\begin{lemma}\label{lem:morph_summary} For any $i \in \N$, let $x$ and  $y$ be the vertices of the Bass-Serre tree $T_i$ corresponding to the subgroups $M$ and $M_i$ of $L_i$
respectively, and let $e$ be the edge of $T_i$, joining these vertices and corresponding to $G_{i-1} \leqslant L_i$. Let $e=e_1 \cup e_2$ be the subdivision of $e$
in the union of two segments $e_1$ and $e_2$ such that ${e_1}_-=x$, ${e_2}_+=y$ and ${e_1}_+={e_2}_-$ is the midpoint of $e$.
Then $\be_1:= \f_i(e_1)$ and $\be_2:=\f_i(e_2)$ are edges of $T_{i+1}$ meeting at the vertex $v$, which is the $\f_i$-image of the midpoint of $e$ in $T_{i+1}$, and the following
properties hold:
\begin{itemize}
  \item[(1)] $\St_{L_{i+1}}(\f_i(x))= M$, $\St_{L_{i+1}}(\f_i(y))=M^{\beta_{i+1}(a_i)}$ and $\St_{L_{i+1}}(v)=M_{i+1}$.
   \item[(2)] $\St_{L_{i+1}}(\be_1)=G_i$, $\St_{L_{i+1}}(\be_2)=\beta_{i+1}(G_i^{a_i})$; in particular, $\be_1\neq \be_2$ in $T_{i+1}$.
  \item[(3)] If $c \in M\setminus G_{i-1}$ then $e_1$ is identified with $c\circ e_1$ in $T_{i+1}$ if and only if $c \in G_i$.
  \item[(4)] If $c \in M_i\setminus \beta_i(G_{i-1})$ then $e_2$ is identified with $c\circ e_2$ in $T_{i+1}$ if and only if $c \in \beta_i(G_i)$.
\end{itemize}

\end{lemma}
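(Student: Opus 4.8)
The plan is to read off each of the four claims directly from the six-step construction of the morphism $(\phi_i,\f_i)$, tracking how a single edge $e$ of $T_i$ and its two halves $e_1,e_2$ propagate through the intermediate trees $T_{i,1},\dots,T_{i,5},T_{i+1}$. First I would fix the standard identifications: vertices and edges of the Bass--Serre tree of a one-edge graph of groups correspond to cosets of the vertex and edge groups, so $x\leftrightarrow M$, $y\leftrightarrow M_i$, $e\leftrightarrow G_{i-1}$ in $T_i$. Step 1 subdivides $e$ at its midpoint $m$, which corresponds to the new middle vertex group $G_{i-1}$; thus $e_1\leftrightarrow G_{i-1}$ (between $M$ and the middle), $e_2\leftrightarrow \beta_i(G_{i-1})$ (between the middle and $M_i$). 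The only steps that actually change the tree are Steps 2 and 5 (Step 3 is a vertex morphism fixing the tree, Step 4 changes only the fundamental domain, Step 6 is again a vertex morphism fixing the tree), so the images $\be_1,\be_2$ and all stabilizer/identification questions are governed by the edge folds in those two steps.

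For claim (1): after all steps the middle vertex group has been replaced by $M_{i+1}$ (Step 3) and the two outer vertex groups are $M$ and, after the conjugation in Step 4 and the relabelling of the final graph of groups, $M^{\beta_{i+1}(a_i)}$ (the copy $M_i$ conjugated into $L_{i+1}$ by $\beta_{i+1}(a_i)$). So $\St_{L_{i+1}}(\f_i(x))=M$, $\St_{L_{i+1}}(v)=M_{i+1}$, $\St_{L_{i+1}}(\f_i(y))=M^{\beta_{i+1}(a_i)}$; this is exactly claim (i) of Lemma~\ref{lem:fold_props} read geometrically, since $\phi_i(M)=M$ and $\phi_i(M_i)=M^{\beta_{i+1}(a_i)}$, and I would cite that. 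For claim (2): in Step 2 the edge $e_1$ is folded together with all edges at the $M$-vertex lying in the coset $xG_i$, so its stabilizer grows from $G_{i-1}$ to $G_i$; after Step 3 the middle vertex group is $M_{i+1}$ and $G_i\hookrightarrow M_{i+1}$ is $g\mapsto\beta_{i+1}(g)$, so the stabilizer of $\be_1$ is $G_i=\beta_{i+1}(G_i)$. Symmetrically $e_2$ gets stabilizer $\beta_i(G_i)$, which Step 3 sends to $\beta_{i+1}(G_i^{a_i})$, and the Step 4 conjugation by $\beta_{i+1}(a_i)\in M_{i+1}$ fixes this subgroup of $M_{i+1}$. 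The key point that $\be_1\ne\be_2$ then follows because $\St_{L_{i+1}}(\be_1)=G_i$ while $\St_{L_{i+1}}(\be_2)=\beta_{i+1}(G_i^{a_i})$ are distinct subgroups of $M_{i+1}$ — indeed if they were equal then $M_{i+1}=\langle\beta_{i+1}(G_i),\beta_{i+1}(G_i^{a_i})\rangle$ would equal $\beta_{i+1}(G_i)=G_i\lneq M_{i+1}$, a contradiction — so the two halves are not folded onto each other and $v$ is a genuine branch/midpoint, not an endpoint.

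For claims (3) and (4): here I would argue that the whole morphism $\f_i$ identifies $e_1$ with $c\circ e_1$ (for $c$ in the stabilizer $M$ of $x=(e_1)_-$) precisely when $c\circ e_1$ and $e_1$ get folded together somewhere along the way. By the previous paragraph the only fold affecting $e_1$ is the Step 2 fold at the $M$-vertex, which folds exactly the edges in the coset $xG_i$ of $G_i$ together; since $c\circ e_1$ corresponds to the coset $cG_{i-1}$ and $e_1$ to $G_{i-1}$, these lie in the same $G_i$-coset iff $c\in G_i$. So for $c\in M\setminus G_{i-1}$, $e_1$ is identified with $c\circ e_1$ in $T_{i+1}$ iff $c\in G_i$; the ``$c\notin G_{i-1}$'' hypothesis just rules out the trivial case $c\circ e_1=e_1$ already in $T_i$. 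Claim (4) is the mirror image at the $M_i$-vertex, folding edges in the coset of $\beta_i(G_i)$ together, so $c\circ e_2=e_2$ in $T_{i+1}$ iff $c\in\beta_i(G_i)$; one only has to double-check that the later Steps 4–6 (conjugation, gluing the two edges at the midpoint, and the retraction $\eta_i$) do not introduce any further identification of $e_2$ with a translate of itself — they do not, because Step 5 folds each edge only with edges having the same stabilizer and Step 6 leaves the tree unchanged.

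I expect the main obstacle to be bookkeeping rather than anything conceptual: keeping the conjugating elements $\beta_{i+1}(a_i)$, the copies $\beta_i$ versus $\beta_{i+1}$, and the cosets straight through six intermediate graphs of groups, and in particular verifying carefully that no fold other than the two I have identified touches $e_1$ or $e_2$ (so that Steps 4–6 genuinely do not spoil claims (2)–(4)). A clean way to organize this is to remark once and for all that a $\Lambda$-tree morphism identifies two geodesic segments with a common endpoint iff they agree on some initial subsegment and that folds only ever identify edges sharing a vertex and, in Steps 5–6, sharing a stabilizer; then each of (1)--(4) is a one-line consequence of the explicit coset description of the Step 1, Step 2 and Step 3 maps together with the already-proved Lemma~\ref{lem:fold_props}.
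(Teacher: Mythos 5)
Your treatment of claims (1), (2) and of the ``if'' direction of (3)/(4) matches the paper's proof: (1) is read off from Lemma \ref{lem:fold_props}(i); the stabilizers of $\be_1,\be_2$ are computed by noting that they grow to $G_i$ and $\beta_i(G_i)$ at Step 2 and are carried by the remaining steps to $G_i=\beta_{i+1}(G_i)$ and $\beta_{i+1}(G_i^{a_i})$; the inequality $\be_1\neq\be_2$ follows because otherwise $M_{i+1}=\langle \St_{L_{i+1}}(\be_1),\St_{L_{i+1}}(\be_2)\rangle$ would equal $\beta_{i+1}(G_i)\neq M_{i+1}$; and for $c\in G_i\setminus G_{i-1}$ the Step 2 fold does identify $e_1$ with $c\circ e_1$.

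The gap is in the ``only if'' direction of (3) and (4). You justify it by asserting that the only steps which change the tree are Steps 2 and 5, so that ``the only fold affecting $e_1$ is the Step 2 fold''. That premise is false: Steps 3 and 6 are vertex morphisms whose maps on the relevant vertex groups are \emph{not} injective (Step 3 maps the amalgam $G_i\ast_{G_{i-1}}\beta_i(G_i)$ \emph{onto} $M_{i+1}$, and Step 6 retracts the double $M\ast_{G_i}M_i^{\beta_{i+1}(a_i^{-1})}$ onto $M$, both with large kernels in general), and a non-injective vertex morphism folds together edges incident to the corresponding vertices of the Bass--Serre tree. So the image of $e_1$ is involved in further folds at Steps 3, 5 and 6, and your argument as written does not rule out that one of these identifies $\f_i(e_1)$ with $\f_i(c\circ e_1)$ for some $c\in M\setminus G_i$. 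The conclusion is nevertheless correct, and the clean way to prove it --- which is what the paper does --- bypasses the step-by-step tracking entirely: by part (2), $\St_{L_{i+1}}(\be_1)=\phi_i(G_i)$, and $\phi_i$ is injective on $M$ by Lemma \ref{lem:fold_props}(i), so for $c\in M\setminus G_i$ one gets $\phi_i(c)\notin\phi_i(G_i)=\St_{L_{i+1}}(\be_1)$ and hence $\f_i(c\circ e_1)=\phi_i(c)\circ\be_1\neq\be_1$; claim (4) is handled the same way using injectivity of $\phi_i$ on $M_i$. You already have all the ingredients for this one-line argument; it is only the ``count the folds'' justification that needs to be replaced.
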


\begin{proof} The fact that $\be_1$ and $\be_2$ are edges of $T_{i+1}$ is clear from the construction, and property (1) holds by Lemma \ref{lem:fold_props}.
The stabilizers of the images of $e_1$ and $e_2$ in $T_{i,2}$ increase to $G_i$ and $\beta_i(G_i)$ respectively at Step 2, but the remaining steps induce
isomorphisms on the edge stabilizers, so, according to Lemma \ref{lem:fold_props}, in $L_{i+1}$ we have
$$\St_{L_{i+1}}(\be_1)=\phi_i(G_i)=G_i=\beta_{i+1}(G_i) \mbox{ and } \St_{L_{i+1}}(\be_2)=\phi_i(\beta_i(G_i))=G_i^{\beta_{i+1}(a_i)}=\beta_{i+1}(G_i^{a_i}).$$
Therefore
\begin{multline*} \left\langle \St_{L_{i+1}}(\be_1),\St_{L_{i+1}}(\be_2) \right\rangle=\left\langle\beta_{i+1}(G_{i}),\beta_{i+1}(G_{i}^{a_i}) \right\rangle =
\beta_{i+1}\left(\langle G_{i},G_{i}^{a_i}\rangle\right)=\beta_{i+1}(M)=M_{i+1}.
\end{multline*}
Since $G_i \neq M$, we have $\beta_{i+1}(G_i) \neq M_{i+1}$, and so $\be_1 \neq \be_2$ in $T_{i+1}$. Thus (2) holds.

To prove (3), observe that if $c \in G_i\setminus G_{i-1}$ then  $e_1$ is folded with $c\circ e_1$ at Step 2, hence
$\f_i(e_1)=\f_i(c\circ e_1)$ in $T_{i+1}$.

Now, suppose that $c \in M\setminus G_i$ (then the images of the edges $e_1$ and $c\circ e_1$ after the folds at Step 2
are distinct). Since  the restriction of $\phi_i:L_i \to L_{i+1}$ to $M$ is injective by Lemma  \ref{lem:fold_props}.(i),
$c \notin G_i$ implies that $\phi_i(c) \notin \phi_i(G_i)=\St_{L_{i+1}}(\be_1)$ (see property (2)).
Therefore $\f_i(c \circ e_1)=\phi_i(c) \circ \be_1 \neq \be_1$ in $T_{i+1}$, as required.

The proof of property (4) is similar to the the proof of (3), and is left as an exercise for the reader.
\end{proof}

\section{Showing that the convergence is strong} \label{sec:strong_conv}
In this section we will prove that if $M$ satisfies the condition \descref{(P4)}, described below, in addition to \descref{(P1)},\descref{(P2)} from
Subsection \ref{subsec:L_i}, then the sequence $(T_i,\f_i)_{i \in \N}$ converges strongly in the category of $\R$-trees.
We will then check that the limit group $L$, defined in Subsection~\ref{subsec:(FA)}, acts on the resulting limit $\R$-tree $T$ so that the stabilizers of arcs are
isomorphic to subgroups of $G_n$, $n \in \N\cup\{0\}$.

Further in this section we will assume that, in addition to properties \descref{(P1)} and \descref{(P2)} (\descref{(P3)} is not needed here),
the finitely generated group $M$, its ascending chain of subgroups $G_0<G_1<\dots$ and elements $a_i \in M$ also satisfy the following condition:
\begin{itemize}
\descitem{(P4)} \normalfont  for all $i \in \N$ and $c \in G_i \setminus G_{i-1}$, neither $\langle G_i, G_i^{ a_i c a_i^{-1}} \rangle$ nor $\langle G_i, G_i^{a_i^{-1} c a_i} \rangle$ is contained in a conjugate of
$G_n$ in $M$ for any $n \in \N \cup \{0\}$.
\end{itemize}

Consider the sequence $(L_i,T_i)_{i \in \N}$, of $\R$-trees with symmetry, together with the morphisms $(\phi_i,\f_i): (L_i,T_i) \to (L_{i+1},T_{i+1})$, $i \in \N$,
constructed in Section \ref{sec:folding_seq}. The construction together with surjectivity of $\phi_i:L_i \to L_{i+1}$ (see Lemma \ref{lem:fold_props}.(ii)) imply
 that each map $\f_i: T_i \to T_{i+1}$ is surjective.
For $1 \le i < j$, let $\f_{ij}:T_i \to T_j$ denote the $\R$-tree morphisms given by $\f_{ij}:=\f_{j-1} \circ\dots\circ \f_i$. These maps are equivariant with respect to the epimorphisms
$\phi_{ij}:L_i \to L_j$ which have already been defined in Subsection \ref{subsec:(FA)}. For convenience of notation, we let $\f_{ii}:T_i \to T_i$ be the identity map.

\begin{lemma}\label{lem:aux-1} Let $e_1$ and $e_2$ be two distinct edges of the tree $T_i$, for some $i \in \N$,
which are adjacent to the same vertex $v={e_1}_-={e_2}_-$ of $T_i$.
Suppose that the subgroup  of $\St_{L_i}(v)$ generated by $\St_{L_i}(e_1)$ and $\St_{L_i}(e_2)$ is not contained in a conjugate of
$G_n$ or in a conjugate of $\beta_i(G_n)$ in $L_i$, for any $n \in \N\cup \{0\}$. Then for every $j > i$, $\f_{ij}(e_1) \cap \f_{ij}(e_2)=\{\f_{ij}(v)\}$ in $T_j$.
\end{lemma}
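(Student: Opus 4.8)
The plan is to argue by induction on $j$, tracking what happens to the pair of edges $(e_1,e_2)$ under each morphism $\f_j$, and to show that the only way $\f_{ij}(e_1)$ and $\f_{ij}(e_2)$ can get identified past their common vertex is if at some stage the subgroup generated by the two edge stabilizers gets swallowed into a conjugate of some $G_n$ or $\beta_\bullet(G_n)$ — which is exactly what the hypothesis forbids. First I would set up the induction: suppose for all $i \le k < j$ we already know $\f_{ik}(e_1) \cap \f_{ik}(e_2) = \{\f_{ik}(v)\}$, write $f_1 := \f_{ik}(e_1)$, $f_2 := \f_{ik}(e_2)$, $w := \f_{ik}(v)$, and apply $\f_k: T_k \to T_{k+1}$. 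Here $f_1, f_2$ may have been subdivided (Step 1 of the construction inserts a midpoint), but since $\f_k$ is a morphism that is injective on suitable subsegments, the image $\f_k(f_1)$ is a (possibly shorter) segment issuing from $\f_k(w)$, and similarly for $f_2$; I need to rule out that $\f_k(f_1)$ and $\f_k(f_2)$ share more than the single point $\f_k(w)$.

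The key step is to describe the folding in $\f_k$ explicitly via Lemma \ref{lem:morph_summary}. An identification of an initial subsegment of $f_1$ with an initial subsegment of $f_2$ forces a group element $c$ carrying one onto the other; because the two edges are adjacent at $w$, $c$ lies in $\St_{L_k}(w)$, and by part (3)/(4) of Lemma \ref{lem:morph_summary} (together with the description of the Bass–Serre tree via cosets) such a fold can only occur when the half-edge $f_1$, resp.\ $f_2$, is "the half coming out of an $M$- or $M_k$-vertex" and $c$ lies in the corresponding copy of $G_k$. Concretely: if $f_1 = f_2$ in $T_{k+1}$, then there is $c \in \St_{L_k}(w)$ with $c \circ f_1$ having the same image as $f_2$; examining the cosets shows $c \in G_k$ (or $\beta_k(G_k)$, after conjugation), and then $\St_{L_k}(f_1)^c$ and $\St_{L_k}(f_2)$ both sit inside $G_k$ (or its conjugate), whence $\langle \St_{L_k}(f_1), \St_{L_k}(f_2)\rangle$ lies in a conjugate of $G_k$. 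But the hypothesis on $\langle \St_{L_i}(e_1), \St_{L_i}(e_2)\rangle$, pushed forward along the equivariant $\phi_{ik}$ (using that the edge stabilizers are preserved, as recorded in Step 5 of Section \ref{sec:folding_seq} and Lemma \ref{lem:morph_summary}(2)), says precisely that this cannot happen — and this is where property \descref{(P4)} must be invoked to propagate the hypothesis through the stage where the stabilizer of the relevant half-edge gets conjugated by $\beta_{k+1}(a_k^{\pm1})$ before a possible fold with a translate.

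The main obstacle, I expect, is the bookkeeping in that last point: the morphism $\f_k$ is a composite of six steps, and the edge stabilizers get conjugated (Step 4) and enlarged (Step 2) along the way, so I must carefully match "$\langle \St(e_1), \St(e_2)\rangle$ not in a conjugate of $G_n$" at level $i$ with the correct incarnation of this condition at level $k$, and then show that a fold at level $k$ of $\f_k(f_1)$ with a translate of $\f_k(f_2)$ is governed by a subgroup of the form $\langle G_k, G_k^{a_k c a_k^{-1}}\rangle$ or $\langle G_k, G_k^{a_k^{-1} c a_k}\rangle$ with $c \in G_k \setminus G_{k-1}$ — exactly the subgroups \descref{(P4)} controls. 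Once that matching is in place, the non-containment at level $k$ follows from the non-containment at level $i$ by an easy homomorphism argument (conjugation and $\phi_{ik}$ send conjugates of $G_n$ into conjugates of $G_n$, and $\phi_{ik}$ is injective on the relevant vertex groups by Lemma \ref{lem:fold_props}(i)), closing the induction and giving $\f_{ij}(e_1)\cap\f_{ij}(e_2) = \{\f_{ij}(v)\}$ for all $j > i$.
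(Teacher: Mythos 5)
The paper proves this lemma in one short, direct step, with no induction on $j$, no tracking of individual folds, and no use of \descref{(P4)}. The observation is: if $\f_{ij}(e_1)\cap\f_{ij}(e_2)$ is strictly larger than $\{\f_{ij}(v)\}$, then, both images being simplicial subtrees containing $\f_{ij}(v)$, the intersection contains an entire edge $f$ of $T_j$ adjacent to $\f_{ij}(v)$. Since $\phi_{ij}\bigl(\St_{L_i}(e_m)\bigr)$ stabilizes the whole subtree $\f_{ij}(e_m)$, both pushed-forward stabilizers stabilize $f$, so $\phi_{ij}\bigl(\langle \St_{L_i}(e_1),\St_{L_i}(e_2)\rangle\bigr)\leqslant \St_{L_j}(f)$. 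But $T_j$ is the Bass--Serre tree of $M\ast_{G_{j-1}}M_j$, so $\St_{L_j}(f)$ is a conjugate of $G_{j-1}$ inside $\St_{L_j}(\f_{ij}(v))$, and $\phi_{ij}$ restricted to $\St_{L_i}(v)$ is an isomorphism onto $\St_{L_j}(\f_{ij}(v))$ carrying conjugates of $G_{j-1}$ (resp.\ $\beta_i(G_{j-1})$) to conjugates of $G_{j-1}$. Pulling back contradicts the hypothesis with $n=j-1$; the quantification over all $n$ exists precisely so that $n=j-1$ is available for whichever $j$ the overlap first occurs at.

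Your plan differs in two ways that matter. First, you assert that \descref{(P4)} ``must be invoked,'' but it plays no role in this lemma: the mechanism you describe --- a fold producing stabilizers of the form $\langle G_k, G_k^{a_k c a_k^{-1}}\rangle$ with $c\in G_k\setminus G_{k-1}$ that must then be shown not to lie in a conjugate of any $G_n$ --- is exactly the content of Lemma \ref{lem:folds_btw_edges}, where a \emph{first} fold is permitted to happen and one must verify the hypothesis of the \emph{present} lemma for the resulting new pair of half-edges. Here the hypothesis is already strong enough to forbid every fold outright. Second, your induction has a genuine gap: when you deduce from an overlap at stage $k+1$ the existence of $c\in\St_{L_k}(w)$ carrying the first edge of $f_1$ to the first edge of $f_2$, you are assuming the overlap is produced by the images of those two first edges. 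A priori the overlapping edge of $T_{k+1}$ adjacent to $\f_{i,k+1}(v)$ could be covered by the image of an edge of $f_1$ or $f_2$ \emph{further} from $w$, whose image has been brought back to $\f_k(w)$ by earlier foldings; ruling this out needs the injectivity statements of Lemma \ref{lem:aux-2}, whose proof in the paper relies on the present lemma, so you risk circularity. The stabilizer argument sidesteps this entirely, because $\St_{L_i}(e_m)$ fixes all of $e_m$ and hence its image fixes whatever edge the intersection happens to contain, regardless of which portion of $e_m$ maps onto it. I would recommend abandoning the fold-by-fold induction for this lemma and reserving that machinery (and \descref{(P4)}) for Lemma \ref{lem:folds_btw_edges}.
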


\begin{proof} Since the action of $L_i$ on $T_i$ has exactly two orbits of vertices, we can assume that either $\St_{L_i}(v)=M$ or $\St_{L_i}(v)=M_i$.

Arguing by contradiction, suppose that for some $j>i$, $\f_{ij}(e_1) \cap \f_{ij}(e_2)$ is strictly larger than $\f_{ij}(v)$ in the simplicial tree
$T_j$. Then this intersection  must contain at least one edge $f$ of $T_j$, which is adjacent to $\f_{ij}(v)$ (as $\f_{ij}(e_1)$ and $\f_{ij}(e_2)$ are simplicial subtrees of $T_j$ by construction).
Then $\phi_{ij}\left(\St_{L_i}(e_1)\right)$ and $\phi_{ij}\left(\St_{L_i}(e_2)\right)$
will both stabilize $f$ in $T_j$, i.e.,
\begin{equation}\label{eq:edge_stabs_gen}
\left\langle \phi_{ij}\left(\St_{L_i}(e_1)\right), \phi_{ij}\left(\St_{L_i}(e_2)\right) \right\rangle
=\phi_{ij}\left(\left\langle\St_{L_i}(e_1), \St_{L_i}(e_2) \right\rangle\right)\leqslant \St_{L_j}(f) \leqslant
\St_{L_j}(\f_{ij}(v)).
\end{equation}
If  $\St_{L_i}(v)=M$ then $ \St_{L_j}(\f_{ij}(v))=M$ and $\St_{L_j}(f)=G_{j-1}^h$ for some $h \in M$.  Since
$\phi_{ij}$ induces the identity map between the stabilizer of $v$ in $T_i$ and the stabilizer of $\f_{ij}(v)$ in $T_j$ (see Lemma \ref{lem:fold_props}),
we can use \eqref{eq:edge_stabs_gen} to conclude that $\left\langle\St_{L_i}(e_1), \St_{L_i}(e_2) \right\rangle \subseteq G_{j-1}^h$ in $M$ (and hence in $L_i$), contradicting the assumptions.

So, suppose that
$\St_{L_i}(v)=M_i$ in $L_i$. Then, by Lemma \ref{lem:fold_props},  $ \St_{L_j}(\f_{ij}(v))=M^b$ for some $b \in L_j$, $\St_{L_j}(f)=(G_{j-1}^{b})^h$ for some $h \in M^b$, and
$\phi_{ij}$ induces an isomorphism between $M_i$ and $M^b$, which maps conjugates of $\beta_i(G_{j-1})$ in $M_i$ to conjugates of $G_{j-1}^b$ in $M^b$.
Hence \eqref{eq:edge_stabs_gen} shows that the subgroup $\left\langle\St_{L_i}(e_1), \St_{L_i}(e_2) \right\rangle$ is contained in a conjugate of
$\beta_i(G_{j-1})$ in $M_i$ (and thus in $L_i$), which, again, leads to a contradictions with the assumptions.
\end{proof}

Suppose that $S_1$ and $S_2$ are (simplicial) subtrees of the tree $T_i$ for some $i \in \N$. We will say that
\emph{a folding happens between $S_1$ and $S_2$ at stage $j$},
for some $j>i$, if 
the intersection $\f_{i,j}(S_1) \cap \f_{i,j}(S_2)$, of the images of $S_1$ and $S_2$ in $T_j$,
is strictly larger than the $\f_{j-1}$-image of the intersection of their images in $T_{j-1}$, i.e.,
$$\f_{j-1} \left(\f_{i,j-1}(S_1) \cap \f_{i,j-1}(S_2)\right) \subsetneqq \f_{i,j}(S_1) \cap \f_{i,j}(S_2) \mbox{ in } T_j.$$

Recall that each tree $T_i$ is equipped with the metric $\d_i$, which is obtained from the standard simplicial metric after downscaling by $2^{i-1}$. In other words, every edge of
$T_i$ is proclaimed to be isometric to the interval $[0,1/2^{i-1}]$. This takes into account the edge subdivision that occurs in our morphisms, making sure that
the $\d_i$-distance between two endpoints of an edge from $T_i$ is equal the $\d_{i+1}$-distance between the images of
these endpoints in $T_{i+1}$: see the lemma below.

\begin{lemma}\label{lem:aux-2} If $1 \le i \le j$ then the restriction of the map $\f_{ij}:T_i \to T_j$ to any edge $e$ of $T_i$ is injective, and thus it induces an isometric embedding of $e$
in $T_j$ with respect to the metrics $\d_i$ on $T_i$ and $\d_j$ on $T_j$.
\end{lemma}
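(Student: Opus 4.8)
The statement is about a composition $\f_{ij} = \f_{j-1} \circ \dots \circ \f_i$ of the morphisms built in Section~\ref{sec:folding_seq}, so the natural approach is induction on $j-i$, with the one-step case $j=i+1$ providing the core. For $j=i$ there is nothing to prove. For the inductive step it suffices to show: if $e$ is an edge of $T_i$ whose image $\f_{i,j-1}(e)$ in $T_{j-1}$ is (still) a single edge on which $\f_{i,j-1}$ restricts isometrically, then $\f_{j-1}$ restricts injectively to that edge of $T_{j-1}$. Thus the whole lemma reduces to the one-step claim: \emph{for every $i$, the morphism $\f_i \colon T_i \to T_{i+1}$ restricts injectively to each edge of $T_i$, hence isometrically with respect to $\d_i$ and $\d_{i+1}$.}

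\textbf{The one-step claim.} Here I would unwind the six steps that make up $(\phi_i,\f_i)$, as described in Section~\ref{sec:folding_seq} and illustrated in Figure~\ref{fig:folding_seq}. Step~1 subdivides every edge of $T_i$ into two segments, each of which, by our metric convention, has length $1/2^i$ --- exactly half the length $1/2^{i-1}$ of an edge of $T_i$ --- and the subsequent steps are either edge foldings (Steps~2,~5), reselection of a fundamental domain (Steps~4), or vertex morphisms that do not alter the underlying tree or act only by relabelling vertex/edge groups (Steps~3,~6). None of Steps 2--6 subdivides any edge, and each is induced by a simplicial map of Bass--Serre trees; moreover, a folding identifies an edge of the ambient tree only with \emph{whole} other edges adjacent to a common vertex (folds of Type~IA and IIA in the sense of \cite{BF-complexity}). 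Consequently, the image under $\f_i$ of each of the two half-edges of a subdivided edge of $T_i$ is a single edge of $T_{i+1}$ (of length $1/2^i$), and $\f_i$ is injective (indeed an isometry of $\langle 1/2^i\rangle$-metric spaces) on that half-edge. Stitching the two halves back together: an edge $e$ of $T_i$ of length $1/2^{i-1}$ maps onto the concatenation of two edges of $T_{i+1}$ meeting at the $\f_i$-image of its midpoint, and this concatenation is again a geodesic segment precisely when those two edges are distinct. That distinctness is exactly property~(2) of Lemma~\ref{lem:morph_summary} for the edge joining the $M$-vertex to the $M_i$-vertex (the two image edges have stabilizers $G_i$ and $\beta_{i+1}(G_i^{a_i})$, which are distinct since they generate $M_{i+1}$ while neither equals $M_{i+1}$), and equivariance of $\f_i$ under $\phi_i$ extends it to every edge of $T_i$. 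Hence $\f_i|_e$ is injective, and since it maps a length-$1/2^{i-1}$ segment onto a length-$1/2^{i-1}$ geodesic segment of $T_{i+1}$, it is an isometric embedding for $\d_i$, $\d_{i+1}$.

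\textbf{Assembling the induction.} Granting the one-step claim, fix $i \le j$ and an edge $e$ of $T_i$. By induction $\f_{i,j-1}|_e$ is an isometric embedding, so $\f_{i,j-1}(e)$ is a geodesic segment of $T_{j-1}$ of length $1/2^{i-1}$; it is a single edge when $i=j-1$, but for $i<j-1$ it may already be a union of several edges of $T_{j-1}$. On each of those edges $\f_{j-1}$ restricts isometrically by the one-step claim, so $\f_{j-1}$ is locally isometric along $\f_{i,j-1}(e)$; being a morphism of $\R$-trees (hence distance-decreasing) and restricting to a \emph{local} isometry on a geodesic segment, it can only fail to be injective on $\f_{i,j-1}(e)$ by folding that segment back on itself at some point, which would force two consecutive edges of $\f_{i,j-1}(e)$ to be identified in $T_j$. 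Ruling this out is the one genuinely delicate point. I would argue it using Lemma~\ref{lem:aux-1} together with property~\descref{(P4)}: the relevant pair of edges in $\f_{i,j-1}(e)$ are adjacent at a single vertex, their stabilizers in $L_{j-1}$ are (conjugates of) $G_{j-2}$-type subgroups sitting inside an $M$- or $M_i$-type vertex group, and property~\descref{(P4)} is precisely what guarantees that the subgroup they generate is not contained in any conjugate of a $G_n$ (or $\beta(G_n)$), so by Lemma~\ref{lem:aux-1} no further folding occurs between them at any stage $\ge j$, in particular not at stage $j$. Therefore $\f_{j-1}|_{\f_{i,j-1}(e)}$ is injective, whence $\f_{ij}|_e = \f_{j-1}|_{\f_{i,j-1}(e)} \circ \f_{i,j-1}|_e$ is an isometric embedding of $(e,\d_i)$ into $(T_j,\d_j)$. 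This no-fold-back step, and its reduction to \descref{(P4)} via Lemma~\ref{lem:aux-1}, is where I expect the real work to lie; everything else is bookkeeping over the six steps of the construction.
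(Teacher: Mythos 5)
Your overall architecture -- induction on $j-i$, the one-step case via the subdivision-plus-folds description and Lemma \ref{lem:morph_summary}(2), and a ``no further fold-back'' claim settled by Lemma \ref{lem:aux-1} -- is the same as the paper's. But there is a genuine gap exactly at the point you flag as ``where the real work lies'': you never verify the hypothesis of Lemma \ref{lem:aux-1}, and the mechanism you propose for doing so is the wrong one. You assert that the two consecutive edges of the image path have stabilizers whose join is controlled by \descref{(P4)}. In fact \descref{(P4)} plays no role in this lemma at all (it is needed later, in Lemma \ref{lem:folds_btw_edges}, for the pair $e$ and $c\circ e$ with $c\in G_i\setminus G_{i-1}$, where the join is $\langle G_i, G_i^{a_i^{-1}ca_i}\rangle$ up to conjugacy). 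Here the relevant pair is $\be_1=\f_i(e_1)$ and $\be_2=\f_i(e_2)$, the two halves of a single subdivided edge meeting at the new middle vertex $v$ with $\St_{L_{i+1}}(v)=M_{i+1}$; their stabilizers are $\beta_{i+1}(G_i)$ and $\beta_{i+1}(G_i^{a_i})$, and by \descref{(P2)} these generate \emph{all} of $M_{i+1}$. The paper then needs, and proves, a separate statement you omit entirely: that $M_{i+1}$ is not contained in any conjugate of $G_n$ or of $\beta_{i+1}(G_n)$ in $L_{i+1}$. This is a short Bass--Serre fixed-point argument (such a containment would force $M_{i+1}$ to fix an edge of $T_{i+1}$ adjacent to $v$, impossible since $M_{i+1}$ strictly contains every $\beta_{i+1}(G_i)^g$). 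Only after that does Lemma \ref{lem:aux-1} apply to give $\f_{i+1,j}(\be_1)\cap\f_{i+1,j}(\be_2)=\{\f_{i+1,j}(v)\}$ for all $j>i$.

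A secondary, structural remark: your inductive step reasons about local injectivity of $\f_{j-1}$ along the whole path $\f_{i,j-1}(e)$, which forces you to analyse junction vertices of all ``ages'' at once, with increasingly unwieldy stabilizers. The paper's induction avoids this: assuming $\f_{i+1,j}$ is injective on each of $\be_1$ and $\be_2$ separately, any coincidence $\f_{ij}(u)=\f_{ij}(w)$ with $u\neq w$ in $e$ produces a second point (besides $\f_{i+1,j}(v)$) in $\f_{i+1,j}(\be_1)\cap\f_{i+1,j}(\be_2)$, directly contradicting the displayed intersection above. If you keep your path-immersion formulation, you must still reduce every interior junction of the path to an application of Lemma \ref{lem:aux-1} to a pair of half-edges at the stage that junction was created -- and in every such application the hypothesis is supplied by \descref{(P2)} and the fixed-point argument, not by \descref{(P4)}.
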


\begin{proof} Since $T_i$ has only one orbit of edges, we can assume that $e$ is the edge from the fundamental region, and so $\St_{L_i}(e)=G_{i-1}$.
First, note that $\f_{i,i+1}=\f_i$ and, according to Lemma \ref{lem:morph_summary},
the image of $e$ in $T_{i+1}$ is subdivided into two distinct edges $\f_i(e)=\be_1 \cup \be_2$,
which are adjacent to the vertex $v$ that is the image of the midpoint of $e$ in $T_{i+1}$.
Therefore the restriction of the map $\f_i:T_i \to T_{i+1}$ to $e$ is injective.

Now we  show that $M_{i+1}$ cannot be contained in any conjugate of $G_n$ or $\beta_{i+1}(G_n)$ in $L_{i+1}$ for any $n \in \N \cup \{0\}$. Indeed, if
$M_{i+1} \subseteq G_n^h$ for some $h \in L_{i+1}$ then $M_{i+1}$ would fix both $v$ and $h \circ u$, where $u$ is the vertex of $T_{i+1}$ fixed by $M$ (as $G_n \leqslant M$). Moreover,
$v \neq h \circ u$, as $v$ and $u$ lie in different $L_{i+1}$-orbits, implying that
$M_{i+1}$ must fix an edge adjacent to $v$ in $T_{i+1}$. The latter is impossible as $M_{i+1}$ is strictly larger than $\beta_{i+1}(G_i)^g$ for any $g\in M_{i+1}$.
On the other hand, if $M_{i+1} \subseteq \beta_{i+1}(G_n)^h$ for some $h \in L_{i+1}$, then, clearly, $h \notin M_{i+1}=\St_{L_{i+1}}(v)$,
and so $M_{i+1}$ fixes two distinct vertices $v$ and  $h \circ v$ in $T_{i+1}$. The latter again contradicts the fact that $M_{i+1}$ does not fix any edge of $T_{i+1}$.

Therefore we can apply Lemma \ref{lem:aux-1} to conclude that for any $j \ge i+1$ one has
\begin{equation}\label{eq:fe_1-e_2}
\f_{i+1,j}(\be_1)\cap\f_{i+1,j}(\be_2)=\{\f_{i+1,j}(v)\} \mbox { in } T_j.
\end{equation}
Thus no folding can happen between $\be_1$ and $\be_2$ at any stage $j >i+1$.

Let $j>i$; we will show that the restriction of $\f_{ij}$ to $e$ is injective by induction on
$j-i$. The case when $j-i=1$ has already been considered, so assume that $j>i+1$. Suppose that $\f_{ij}(u)=\f_{ij}(w)$ for two distinct points $u,w \in e$,
such that (without loss of generality) $\f_i(u) \neq v$.
Since $j-(i+1)<j-i$, by the induction hypothesis the restriction of $\f_{i+1,j}$ to each of $\be_1$ and $\be_2$ is injective. Hence
$\f_{ij}(u)=\f_{i+1,j}(\f_i(u)) \neq \f_{i+1,j}(v)$; by the same reason $\f_i(u) \neq \f_i(w)$ cannot both belong to $\be_1$ or $\be_2$.
Therefore $\f_{i+1,j}(v) $ and $\f_{ij}(u)$ are two distinct points of the intersection $\f_{i+1,j}(\be_1)\cap\f_{i+1,j}(\be_2)$, which contradicts  \eqref{eq:fe_1-e_2}.
Thus $\f_{ij}$ induces an isometry of $e$ with its image in $T_j$.
\end{proof}

\begin{lemma}\label{lem:folds_btw_edges} Let $a$ and $b$ be two distinct edges of $T_l$ for some $l \in \N$. Then there can be no more than four different stages at which foldings
happen between  $a$ and $b$.
\end{lemma}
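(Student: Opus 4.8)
The plan is to follow the intersection of the images of $a$ and $b$ in the trees $T_j$ and to bound the number of stages at which it strictly grows. By Lemma~\ref{lem:aux-2} the restriction of each $\f_{lj}$ to an edge of $T_l$ is an isometric embedding, so $A_j:=\f_{lj}(a)$ and $B_j:=\f_{lj}(b)$ are arcs of $T_j$, and their intersection $I_j:=A_j\cap B_j$ is again an arc (possibly empty or a single point), a sub-arc of each. Since $\f_{j-1}$ is also injective on $A_{j-1}$, we have $\f_{j-1}(I_{j-1})\subseteq I_j$ with $\f_{j-1}$ preserving the length of $I_{j-1}$; hence a folding happens between $a$ and $b$ at stage $j$ precisely when $I_j$ is strictly longer than $I_{j-1}$. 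Being an arc, $I_j$ has at most two ends, and any such folding enlarges $I_j$, beyond $\f_{j-1}(I_{j-1})$, at one or both of them. It is therefore enough to show that foldings enlarging the intersection at a fixed end can occur at most twice, and then to sum over the (at most two) ends.

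Fix an end. At a stage $j$ at which $I_j$ is a nondegenerate arc whose relevant end is a vertex $\xi$ of $T_j$, let $f\subseteq A_j$ and $f'\subseteq B_j$ be the edges of $T_j$ at $\xi$ pointing away from $I_j$; then $f\neq f'$, and we write $f'=c\circ f$ with $c\in\St_{L_j}(\xi)$. A folding at this end at stage $j+1$ forces $\f_j(f)$ and $\f_j(f')$ to share an edge of $T_{j+1}$ adjacent to $\f_j(\xi)$; translating $\xi$ into the fundamental domain of $T_j$, this is controlled by parts~(3) and~(4) of Lemma~\ref{lem:morph_summary}: it occurs iff $c$ lies in the relevant conjugate of $G_j$ (when $\St_{L_j}(\xi)$ is a conjugate of $M$) or of $\beta_j(G_j)$ (when it is a conjugate of $M_j$). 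In that case $\f_j(f)=c\circ\f_j(f)$, the first halves of $f$ and $f'$ get identified, and the new end either jumps to the $\f_j$-image of the other endpoint of $f$, or — in the generic case, when only the first halves are identified — moves to the $\f_j$-image of the midpoint of $f$, whose stabilizer is, by Lemma~\ref{lem:morph_summary}(1), a conjugate of $M_{j+1}$.

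The key point is that such a configuration is then \emph{frozen} by property~\descref{(P4)}. Suppose the relevant end $\eta$ of $I_k$ is a vertex of $T_k$ with $\St_{L_k}(\eta)$ a conjugate of $M_k$, and with divergent edges $g\subseteq A_k$, $g'=c\circ g\subseteq B_k$, $g\neq g'$, $c\in\St_{L_k}(\eta)$. Moving $\eta,g,g'$ over the fundamental domain of $T_k$, and using that by Lemma~\ref{lem:fold_props} the homomorphism $\phi_{k-1}$ acts on $M_{k-1}$ as conjugation by $\beta_k(a_{k-1})$ (rather than as the identity), one computes that $\langle\St_{L_k}(g),\St_{L_k}(g')\rangle$ is conjugate in $L_k$ to $\beta_k\!\left(\langle G_{k-1},\,G_{k-1}^{\,a_{k-1}^{\pm1}c'a_{k-1}^{\mp1}}\rangle\right)$ for an appropriate $c'\in G_{k-1}\setminus G_{k-2}$ built from $c$. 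By \descref{(P4)}, the group $\langle G_{k-1},\,G_{k-1}^{\,a_{k-1}^{\pm1}c'a_{k-1}^{\mp1}}\rangle$ lies in no conjugate of any $G_n$ in $M$; and since a subgroup of one vertex group of the amalgamated free product $L_k=M\ast_{G_{k-1}=\beta_k(G_{k-1})}M_k$ that is contained in a conjugate of the other vertex group must be conjugate into the edge group $G_{k-1}$, it follows that $\langle\St_{L_k}(g),\St_{L_k}(g')\rangle$ lies in no conjugate of any $G_n$ nor of any $\beta_k(G_n)$ in $L_k$. Hence Lemma~\ref{lem:aux-1} applies to $g$ and $g'$, and $I_j$ never grows past $\eta$ for $j>k$.

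It remains to chase the end through at most two foldings before one of these frozen configurations is reached — a first folding moves the end onto a vertex whose stabilizer is a conjugate of $M_{k_1}$ or of $M$, and at most one more is then needed to land on a vertex where the divergence data has the shape above — and to treat the case $a\cap b=\varnothing$ (where $I_l=\varnothing$) separately: there $I_j$ can become nonempty only after the shortest segment of $T_l$ joining $a$ to $b$ has been collapsed onto part of $a$ or $b$, and the analysis above then applies to each end of the arc that appears, without changing the count of four. The main technical obstacle is the computation of the third paragraph together with this chase: correctly tracking which conjugating element survives after each fold and after each passage of an end through a midpoint vertex, and checking that the resulting two-generator subgroup is exactly of the form constrained by~\descref{(P4)}. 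Once this is in place, the rest of the argument is routine.
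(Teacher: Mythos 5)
Your overall strategy is the paper's: track the intersection of the images of $a$ and $b$ end by end, use Lemma \ref{lem:aux-2} and Lemma \ref{lem:morph_summary}(3),(4) to detect when a fold can occur, and invoke \descref{(P4)} through Lemma \ref{lem:aux-1} to show that the post-fold configuration never folds again. But the pivotal step is exactly what you postpone as ``the main technical obstacle'', and the version of it you do state is false. In your third paragraph you claim that for an \emph{arbitrary} pair of divergent edges $g$, $g'=c\circ g$ at a vertex whose stabilizer is a conjugate of $M_k$, the subgroup $\langle\St_{L_k}(g),\St_{L_k}(g')\rangle$ is conjugate to $\beta_k\bigl(\langle G_{k-1},G_{k-1}^{a_{k-1}^{\pm1}c'a_{k-1}^{\mp1}}\rangle\bigr)$ with $c'\in G_{k-1}\setminus G_{k-2}$. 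For a general $c$ this subgroup is just a conjugate of $\beta_k\bigl(\langle G_{k-1},G_{k-1}^{\beta_k^{-1}(c)}\rangle\bigr)$, and nothing forces $\beta_k^{-1}(c)$ to have the special form $a_{k-1}^{\pm1}c'a_{k-1}^{\mp1}$; property \descref{(P4)} says nothing about such subgroups, which may perfectly well be conjugate into some $G_n$. The correct statement -- and the heart of the paper's proof -- concerns only configurations \emph{created by a fold}: if a fold occurs at stage $i+1$ between divergent edges $e$ and $f=c\circ e$ of $T_i$, then Lemma \ref{lem:morph_summary}(3) (resp.\ (4)) forces $c\in G_i\setminus G_{i-1}$ (resp.\ the $\beta_i$-analogue), only the near halves are identified, and the surviving halves have stabilizers $\beta_{i+1}(G_i^{a_i})$ and $\beta_{i+1}(G_i^{ca_i})$, which generate a conjugate of $\beta_{i+1}\bigl(\langle G_i,G_i^{a_i^{-1}ca_i}\rangle\bigr)$; one must then still verify (as in the proof of Lemma \ref{lem:aux-2}, using the structure of edge stabilizers at the middle vertex) that this subgroup is not conjugate into any $G_n$ or $\beta_{i+1}(G_n)$ in $L_{i+1}$ before Lemma \ref{lem:aux-1} applies. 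None of this computation appears in your argument, and without it the ``frozen configuration'' claim has no content.

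The bookkeeping is also incomplete. Your claim that at most two foldings can occur at a fixed end is never justified, and the degenerate situations (intersection empty or a single point, where ``the end'' is not yet defined, and where a single folding event may enlarge the intersection at both ends at once) are waved away in one sentence. The paper sidesteps all of this with a cleaner count: once \emph{two} foldings have occurred, Lemma \ref{lem:aux-2} guarantees the intersection is a nondegenerate simplicial segment $[u,w]$, every subsequent fold is a fold between the two overhanging paths at $u$ or between those at $w$, and at each of these two ends the \emph{first} subsequent fold immediately produces the frozen \descref{(P4)}-configuration above, so at most one fold happens per end; this gives $2+1+1=4$. You should either adopt that decomposition or genuinely prove your ``at most two per end, through every degenerate stage'' assertion; as written, the proposal records the shape of the paper's argument but omits its substance.
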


\begin{proof} Suppose that $k \in \N$, $k >l$, is a stage by which two different foldings between $a$ and $b$ have already occurred. Then, by Lemma \ref{lem:aux-2},
$\f_{lk}(a)$ and $\f_{lk}(b)$ are simple simplicial paths in $T_k$ and the intersection $\f_{lk}(a)\cap \f_{lk}(b)$ is a geodesic segment $[u,w]$ for some vertices $u$ and
$w$ of $T_k$, $u \neq w$.
Let $a_-,a_+$ and $b_-,b_+$ denote the endpoints of $a$ and $b$, respectively, so that
$[u,w]=[\f_{lk}(a_-),w] \cap [\f_{lk}(b_-),w]= [u,\f_{lk}(a_+)] \cap [u,\f_{lk}(b_+)]$ in $T_k$.

By Lemma \ref{lem:aux-2}, any folding happening between $a$ and $b$ at any stage $j>k$ has to come either from a folding between the geodesic segments
$[u,\f_{lk}(a_-)]$ and $[u,\f_{lk}(b_-)]$, or from a folding between $[w,\f_{lk}(a_+)]$ and $[w,\f_{lk}(b_+)]$.
If a folding happens between $p_1:=[u,\f_{lk}(a_-)]$ and $p_2:=[u,\f_{lk}(b_-)]$ at some stage $j>k$, then pick minimal such $j$. Then the restriction of the map
$\f_{ki}$ to the union $p_1 \cup p_2$ is still injective, where $i:=j-1$, and so $\f_{ki}(p_1)\cap \f_{ki}(p_2)=\{\f_{ki}(u)\}$ in $T_i$.

Note that $v:=\f_{ki}(u)$ is a vertex of $T_i$. Let $e$ denote the first edge of $\f_{ki}(p_1)$, and let $f$ denote the first edge of $\f_{ki}(p_2)$ in $T_i$;
thus $e_-=f_-=v$ (see Figure \ref{fig:fork}).

\begin{figure}[ht]\begin{center}
\input{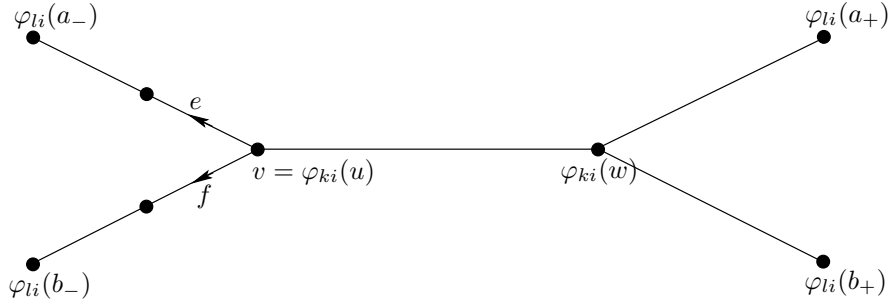}
\caption{\label{fig:fork} The images of the edges $a$ and $b$ in the tree $T_i$. }
\end{center}
\end{figure}

Since $T_i$ has only one orbit of edges we can assume that $\St_{L_i}(e)=G_{i-1}$ and either $\St_{L_i}(v)=M$ or $\St_{L_i}(v)=M_{i}$. Let us suppose that
$\St_{L_i}(v)=M$ (the other case is similar). Then
$f=c \circ e$ for some $c \in \St_{L_i}(v)=M$. Note that $c \notin G_{i-1}=\St_{L_i}(e)$ as $e \neq f$.
Now let us recall how the morphism from $T_i$ to $T_{i+1}=T_j$ works.
First we subdivide the edge $e$ into two halves $e_1$ and $e_2$, such that ${e_1}_-=e_-=v$, ${e_1}_+={e_2}_-$ and ${e_2}_+=e_+$.
Then $f$ is subdivided into the union of $f_1=c \circ e_1$ and $f_2=c \circ e_2$.

If $c \notin G_i$ then, according to Lemma \ref{lem:morph_summary}.(3), the images of $e_1$ and $f_1$ in $T_{i+1}$ are distinct, which
means that no folding between $p_1$ and $p_2$ can happen at stage $j=i+1$, contradicting the choice of $j$.

Hence $c \in G_i\setminus G_{i-1}$, in which case $\f_i(e_1)=\f_i(f_1)$ in $L_{i+1}$ by Lemma \ref{lem:morph_summary}.(3).
Since $\St_{L_{i+1}}(\f_i(e_2))=\beta_{i+1}(G_i^{a_i}) \leqslant M_{i+1}$ (by Lemma \ref{lem:morph_summary}.(2)) and $\phi_i(c)=c \in M$,
we have $$\St_{L_{i+1}}(\f_i(f_2))=\St_{L_{i+1}}(\phi_i(c) \circ \f_i(e_2))=\beta_{i+1}(G_i^{a_i})^c=\beta_{i+1}(G_i^{ca_i}),$$
where the last equality holds because $c \in G_i$ is identified with $\beta_{i+1}(c) \in \beta_{i+1}(G_i)$ in $L_{i+1}$ by the definition of $L_{i+1}$.
It follows that
\begin{equation}\label{eq:e_2-f_2}
\left\langle\St_{L_{i+1}}(\f_i(e_2)),\St_{L_{i+1}}(\f_i(f_2))\right\rangle = \beta_{i+1} \left(\langle G_i^{a_i},G_i^{ca_i} \rangle\right) =\beta_{i+1}\bigl(\langle G_i,G_i^{a_i^{-1}ca_i} \rangle\bigr)^{\beta_{i+1}(a_i)}.
\end{equation}
Recalling that $c \in G_i\setminus G_{i-1}$, we see that $\beta_{i+1} \bigl(\langle G_i,G_i^{a_i^{-1}ca_i} \rangle\bigr)$ is not contained in a conjugate of $\beta_{i+1}(G_n)$ in
$M_{i+1}=\St_{L_{i+1}}({e_2}_-)$, for any $n \in \N\cup \{0\}$, by \descref{(P4)}.
Since the stabilizer of any edge adjacent to ${e_2}_-$ in $T_{i+1}$ is conjugate to $\beta_{i+1}(G_i)$ in $M_{i+1}$,
we can argue in the same way as in the proof
of Lemma \ref{lem:aux-2}, to show that $\beta_{i+1} \bigl(\langle G_i,G_i^{a_i^{-1}ca_i} \rangle\bigr)$ cannot be contained in a conjugate of $G_n$ or in a conjugate of
$\beta_{i+1}(G_n)$ in $L_{i+1}$, for any $n \in \N\cup \{0\}$. In view of \eqref{eq:e_2-f_2}, the latter allows us to apply Lemma \ref{lem:aux-1},
concluding that no further folding can happen between $p_1$ and $p_2$ at any stage $m$ with $m>i+1=j$.

Thus at most one folding is possible between $p_1:=[u,\f_{lk}(a_-)]$ and $p_2:=[u,\f_{lk}(b_-)]$. Similarly, at most one folding is possible between
$[w,\f_{lk}(a_+)]$ and $[w,\f_{lk}(b_+)]$. This shows that there can be no more than four different stages when foldings happen between $a$ and $b$, as claimed.
\end{proof}

\begin{lemma}\label{lem:edge_stab} Let $e$ be an edge of $T_i$, for some $i \in \N$. Then for any $j \ge i$, $\St_{L_j}(\f_{ij}(e))=\phi_{ij}(\St_{L_i}(e))$.
\end{lemma}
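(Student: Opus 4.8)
The plan is to prove the two inclusions of the claimed equality separately, obtaining the non-trivial one by induction on $j-i$, carried out simultaneously for all $i\in\N$ and all edges $e$ of $T_i$. The inclusion $\phi_{ij}(\St_{L_i}(e))\subseteq \St_{L_j}(\f_{ij}(e))$ is immediate from equivariance: if $g\in\St_{L_i}(e)$ and $s\in e$, then $\phi_{ij}(g)\circ\f_{ij}(s)=\f_{ij}(g\circ s)=\f_{ij}(s)$, so $\phi_{ij}(g)$ fixes $\f_{ij}(e)$ pointwise. For the reverse inclusion the case $j=i$ is trivial. Since $T_i$ has a single orbit of edges, the maps $\f_i$, $\f_{ij}$ are equivariant, and $\phi_{ij}$ restricts to the identity on $M\leqslant L_i$ (hence $\phi_{ij}(G_{i-1})=G_{i-1}$), it suffices to prove the reverse inclusion for the edge $e_0$ of $T_i$ with $\St_{L_i}(e_0)=G_{i-1}$: for an arbitrary edge $e=g\circ e_0$ the general statement follows by conjugating by $\phi_{ij}(g)$.

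For the base case $j=i+1$, recall from Lemma~\ref{lem:morph_summary} that $\f_i(e_0)=\be_1\cup\be_2$, where $\be_1,\be_2$ are distinct edges of $T_{i+1}$ meeting at a vertex whose stabilizer is $M_{i+1}$, and $\St_{L_{i+1}}(\be_1)=G_i$, $\St_{L_{i+1}}(\be_2)=\beta_{i+1}(G_i^{a_i})$; hence $\St_{L_{i+1}}(\f_i(e_0))=G_i\cap\beta_{i+1}(G_i^{a_i})$. Since $L_{i+1}=M\ast_{G_i=\beta_{i+1}(G_i)}M_{i+1}$, an element lying in $G_i\leqslant M$ and in $\beta_{i+1}(G_i^{a_i})\leqslant M_{i+1}$ lies in $M\cap M_{i+1}=G_i=\beta_{i+1}(G_i)$; writing it as $\beta_{i+1}(c)$ with $c\in G_i$, the condition $\beta_{i+1}(c)\in\beta_{i+1}(G_i^{a_i})$ forces $c\in G_i\cap G_i^{a_i}$. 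If $c\notin G_{i-1}$ then $a_i^{-1}ca_i\in G_i$, so $G_i^{a_i^{-1}ca_i}=G_i$ and $\langle G_i,G_i^{a_i^{-1}ca_i}\rangle=G_i$ is contained in a conjugate of $G_i$, contradicting \descref{(P4)}. Thus $c\in G_{i-1}$ and $\St_{L_{i+1}}(\f_i(e_0))\subseteq\beta_{i+1}(G_{i-1})=G_{i-1}=\phi_i(\St_{L_i}(e_0))$, as needed.

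For the inductive step, let $j>i+1$ and let $e$ be any edge of $T_i$; write $\f_i(e)=\be_1\cup\be_2$ with $\be_1,\be_2$ edges of $T_{i+1}$ meeting at a vertex $v$ whose stabilizer is a conjugate of $M_{i+1}$. Since $\f_{ij}=\f_{i+1,j}\circ\f_i$, we have $\f_{ij}(e)=\f_{i+1,j}(\be_1)\cup\f_{i+1,j}(\be_2)$, hence $\St_{L_j}(\f_{ij}(e))=\bigcap_{k=1,2}\St_{L_j}(\f_{i+1,j}(\be_k))$, which by the inductive hypothesis applied to the index pair $(i+1,j)$ equals $\bigcap_{k=1,2}\phi_{i+1,j}(\St_{L_{i+1}}(\be_k))$. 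Now $\St_{L_{i+1}}(\be_1)$ and $\St_{L_{i+1}}(\be_2)$ both lie in $\St_{L_{i+1}}(v)$, and by Lemma~\ref{lem:fold_props} (applied repeatedly) $\phi_{i+1,j}$ restricts to an isomorphism of $\St_{L_{i+1}}(v)$ onto $\St_{L_j}(\f_{i+1,j}(v))$; an injective homomorphism carries the intersection of two subgroups of its domain onto the intersection of their images, so the above equals $\phi_{i+1,j}\bigl(\St_{L_{i+1}}(\be_1)\cap\St_{L_{i+1}}(\be_2)\bigr)=\phi_{i+1,j}(\St_{L_{i+1}}(\f_i(e)))$. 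By the base case $\St_{L_{i+1}}(\f_i(e))=\phi_i(\St_{L_i}(e))$, and therefore $\St_{L_j}(\f_{ij}(e))=\phi_{i+1,j}(\phi_i(\St_{L_i}(e)))=\phi_{ij}(\St_{L_i}(e))$, completing the induction.

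The crux is the base case, namely the inclusion $G_i\cap\beta_{i+1}(G_i^{a_i})\subseteq G_{i-1}$ inside $L_{i+1}$; this is exactly where condition \descref{(P4)} enters. The remaining ingredients are routine: equivariance of the morphisms, the normal form in the amalgamated product $L_{i+1}$, and the fact — implicit in Lemma~\ref{lem:fold_props} — that $\phi_{i+1,j}$ restricts to an isomorphism on the stabilizer of each vertex of $T_{i+1}$.
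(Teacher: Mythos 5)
Your proof is correct and follows essentially the same route as the paper: induction on $j-i$, with the base case reducing to showing $G_i\cap G_i^{a_i}\subseteq G_{i-1}$ via \descref{(P4)}, and the inductive step passing the intersection of the two edge stabilizers through $\phi_{i+1,j}$ using its injectivity on the middle vertex stabilizer. The only (cosmetic) difference is that you invoke the $\langle G_i,G_i^{a_i^{-1}ca_i}\rangle$ half of \descref{(P4)} applied directly to $c$, whereas the paper conjugates first and uses the $\langle G_i,G_i^{a_ica_i^{-1}}\rangle$ half; both work.
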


\begin{proof} The statement will again be proved by induction on $j-i$. 
Assume, first, that $j=i+1$. Since
$T_i$ contains only one orbit of edges under the natural action of $L_i$, we can assume that $\St_{L_i}(e)=G_{i-1}$ and $\St_{L_i}(e_-)=M$, for some endpoint $e_-$ of $e$.
By the construction, $\f_{i}(e)=e_1\cup e_2$ in $T_{i+1}$ with $\St_{L_{i+1}}(e_1)=\beta_{i+1}(G_i)$ and $\St_{L_{i+1}}(e_2)=\beta_{i+1}(G_i^{a_i})$
in $M_{i+1}=\St_{L_{i+1}}(v)$, where $v$ the vertex of $T_{i+1}$, adjacent to both $e_1$ and $e_2$, which is the image of the midpoint of $e$ in $T_{i+1}$.
Since $\beta_{i+1}: M \to M_{i+1}$ is injective, we can observe that
\begin{equation}\label{eq:edge-stab}
\St_{L_{i+1}}(\f_{i}(e))=\St_{L_{i+1}}(e_1) \cap\St_{L_{i+1}}(e_2)=\beta_{i+1}(G_i)\cap \beta_{i+1}(G_i^{a_i})=\beta_{i+1}(G_i \cap G_i^{a_i}).
\end{equation}

Now, let us show that $G_i \cap G_i^{a_i}=G_{i-1}$ in $M$. Indeed, $G_{i-1} \subseteq G_i \cap G_i^{a_i}$ by \descref{(P1)}, and if there existed $d \in \left( G_i \cap G_i^{a_i}\right)\setminus G_{i-1}$, then
$d= a_i c a_i^{-1}$ for some $c \in G_i\setminus G_{i-1}$. Hence $G_i=G_i^d=G_i^{a_ica_i^{-1}}$, and so $\langle G_i,G_i^{a_ica_i^{-1}}\rangle=G_i$, contradicting \descref{(P4)}. Therefore
$G_i \cap G_i^{a_i}=G_{i-1}$, and \eqref{eq:edge-stab} gives $\St_{L_{i+1}}(\f_{i}(e))=\beta_{i+1}(G_{i-1})$.
But $\beta_{i+1}(G_{i-1})=G_{i-1}=\phi_i(G_{i-1})=\phi_i(\St_{L_i}(e))$ by the definitions of $L_{i+1}$ and $\phi_i$. Hence
\begin{equation}\label{eq:st-1}
\St_{L_{i+1}}(\f_{i}(e))=\St_{L_{i+1}}(e_1) \cap\St_{L_{i+1}}(e_2)=\phi_i(\St_{L_i}(e)) \mbox{ in } L_{i+1}.
\end{equation}

Thus we can now assume that $j>i+1$. Then
\begin{equation}\label{eq:st-2}
\St_{L_j}(\f_{ij}(e))=\St_{L_j}(\f_{i+1,j}(e_1)) \cap \St_{L_j}(\f_{i+1,j}(e_2)) \mbox { in } L_j.
\end{equation}
Since $j-(i+1)<j-i$,  the induction hypothesis implies that
\begin{equation}\label{eq:st-3}
\St_{L_j}(\f_{i+1,j}(e_1))=\phi_{i+1,j}(\St_{L_{i+1}}(e_1))    \mbox{ and } \St_{L_j}(\f_{i+1,j}(e_2))=\phi_{i+1,j}(\St_{L_{i+1}}(e_2)).
\end{equation}

It remains to note that $\St_{L_{i+1}}(e_1)$ and $\St_{L_{i+1}}(e_2)$ are both subgroups of
$M_{i+1}=\St_{L_{i+1}}(v)$ and $\phi_{i+1,j}$ is injective on $M_{i+1}$ by Lemma \ref{lem:fold_props}.(i), hence
\begin{equation}\label{eq:st-4}
\phi_{i+1,j}(\St_{L_{i+1}}(e_1)) \cap \phi_{i+1,j}(\St_{L_{i+1}}(e_2))=\phi_{i+1,j}(\St_{L_{i+1}}(e_1) \cap \St_{L_{i+1}}(e_2)).
\end{equation}
Collecting the equalities \eqref{eq:st-2}-\eqref{eq:st-4} together and recalling \eqref{eq:st-1}, we achieve
$$\St_{L_j}(\f_{ij}(e))=\phi_{i+1,j}(\St_{L_{i+1}}(e_1) \cap \St_{L_{i+1}}(e_2))=\phi_{i+1,j}(\phi_i(\St_{L_i}(e)))=\phi_{ij}(\St_{L_i}(e)),$$
as required.
\end{proof}

\begin{prop} \label{prop:strong-lim} The sequence of simplicial $\R$-trees $(T_i,\d_i,\f_i)_{i \in \N}$ defined above is strongly convergent.
\end{prop}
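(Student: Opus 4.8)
The plan is to establish the following statement, which clearly implies strong convergence: for every $l \in \N$ and every geodesic segment $[x,y]$ in $T_l$ there is $k \ge l$ such that, for all $j \ge k$, the morphism $\f_{kj}$ restricts to an isometric embedding of $\f_{lk}([x,y])$ into $T_j$ (with respect to $\d_k$ and $\d_j$). Indeed, for $s,t\in[x,y]$ the points $\f_{lk}(s),\f_{lk}(t)$ lie in $\f_{lk}([x,y])$, so this gives $\d_j(\f_{lj}(s),\f_{lj}(t))=\d_j\bigl(\f_{kj}(\f_{lk}(s)),\f_{kj}(\f_{lk}(t))\bigr)=\d_k(\f_{lk}(s),\f_{lk}(t))$ for all $j\ge k$, using $\f_{lj}=\f_{kj}\circ\f_{lk}$, which is exactly the definition of strong convergence.

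First I would reduce to a finite chain of edges. Write $[x,y]$ as a concatenation of sub-segments of finitely many edges $a_0,a_1,\dots,a_n$ of $T_l$ (with $a_1,\dots,a_{n-1}$ traversed entirely and $a_0,a_n$ possibly only partially), so that $\f_{lk}([x,y])\subseteq A_0\cup\dots\cup A_n$ for every $k\ge l$, where $A_p:=\f_{lk}(a_p)$. By Lemma \ref{lem:aux-2} each $\f_{lk}$ is injective on every $a_p$, so each $A_p$ is an embedded arc of $T_k$; combining this with $\f_{lj}=\f_{kj}\circ\f_{lk}$ and the injectivity of $\f_{lj}$ on $a_p$ (again Lemma \ref{lem:aux-2}) shows that $\f_{kj}$ is injective on $A_p$ for every $j\ge k$. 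Since consecutive $a_p$ share an endpoint, $\mathcal A_k:=A_0\cup\dots\cup A_n$ is a connected subset, hence a subtree, of $T_k$.

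Next I would fix $k$ using Lemma \ref{lem:folds_btw_edges}: for each of the $\binom{n+1}{2}$ pairs $p\ne q$ there are at most four stages at which a folding happens between $a_p$ and $a_q$, so one may choose $k\ge l$ exceeding all of these finitely many stages. Then for every $j>k$ no folding happens between any $a_p$ and $a_q$ at stage $j$, which by the definition of a folding means $\f_{lj}(a_p)\cap\f_{lj}(a_q)=\f_{j-1}\bigl(\f_{l,j-1}(a_p)\cap\f_{l,j-1}(a_q)\bigr)$; iterating this down to stage $k$ yields $\f_{lj}(a_p)\cap\f_{lj}(a_q)=\f_{kj}(A_p\cap A_q)$ for all $j\ge k$. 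From this I would deduce that $\f_{kj}$ is injective on all of $\mathcal A_k$: if $z\in A_p$, $z'\in A_q$ and $\f_{kj}(z)=\f_{kj}(z')$, then this common value lies in $\f_{lj}(a_p)\cap\f_{lj}(a_q)=\f_{kj}(A_p\cap A_q)$, so it equals $\f_{kj}(w)$ for some $w\in A_p\cap A_q$; injectivity of $\f_{kj}$ on the single arc $A_p$ forces $z=w$, and symmetrically $z'=w$, whence $z=z'$. Finally, an injective morphism of $\R$-trees restricted to a subtree is automatically an isometric embedding (the image of a geodesic segment is a path of the same $\d$-length, and if it were not geodesic it would have to backtrack, contradicting injectivity), so $\f_{kj}$ is an isometric embedding on $\mathcal A_k\supseteq\f_{lk}([x,y])$ for every $j\ge k$, which is what we wanted.

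The only genuinely delicate point is the passage from \emph{no folding occurs between the edges $a_p$ beyond stage $k$} to \emph{$\f_{kj}$ is injective on the whole chain $\mathcal A_k$}, rather than merely on each arc $A_p$ separately; the rest is bookkeeping, since the two substantial ingredients — finiteness of the number of foldings between two given edges and injectivity on a single edge — are supplied by Lemmas \ref{lem:folds_btw_edges} and \ref{lem:aux-2} respectively.
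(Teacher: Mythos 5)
Your proposal is correct and follows essentially the same route as the paper: cover $[x,y]$ by finitely many edges, invoke Lemma \ref{lem:aux-2} for injectivity on each edge and Lemma \ref{lem:folds_btw_edges} to bound the stages at which foldings occur between any pair, then take $k$ beyond all of these so that $\f_{kj}$ is injective (hence an isometric embedding) on the image of the whole path. The only difference is that you spell out the passage from ``no further foldings between any pair'' to injectivity on the entire union, which the paper leaves implicit; your argument for that step is sound.
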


\begin{proof} Consider any $l \in \N$ and any points $x,y$ in $T_l$. 
Let $p$ be some finite simplicial path in $T_l$ containing $x$ and $y$. By Lemma \ref{lem:aux-2}, for every $i \in \N$, $i \ge l$, the restriction of $\f_{li}$ to each edge of $p$ is injective, and by
Lemma \ref{lem:folds_btw_edges}, for any pair of edges $a$ and $b$ of $p$, there exists $K=K(a,b) \in \N$ such that restriction of $\f_{ij}$ to
$\f_{li}(a) \cup \f_{li}(b)$ is injective, provided $j\ge i \ge K$. Since $p$ contains only finitely many edges (in $T_l$), we can
conclude that the restriction of $\f_{kj}$ to $\f_{lk}(p)$ is injective for any $j \ge k$, where $k:=\max\{K(a,b) \mid a,b \mbox{ are edges of }p\}$. Therefore
$\d_j(\f_{lj}(s),\f_{lj}(t))=\d_k(\f_{lk}(s),\f_{lk}(t))$ for any points $s,t \in p$ and any $j \ge k$.
\end{proof}

Since the sequence $(T_i,\d_i,\f_i)$ converges strongly, we can form the limit $\R$-tree $(T,\d)$, as discussed in Subsection \ref{subsec:strong_lim}.
Keeping the same notation, we let $\theta_i:(T_i,\d_i) \to (T,\d)$, $i \in \N$,  denote the resulting $\R$-tree morphisms.
We will also use the pseudometric $\hat \d$ and the equivalence relation $\sim$ on $T_1$ defined in Subsection~\ref{subsec:strong_lim}.

It is easy to see that the group $L_1$ acts by isometries on the $\R$-tree $(T,\d)$ in the following manner.
If $g \in L_1$ and $\bar x\in T$, then pick any $x \in T_1$ with $\theta_1(x)=\bar x$ and define $g \circ \bar x:=\theta_1(g \circ x) \in T$.

Let $L$ be the direct limit of the sequence $(L_i,\phi_i)_{i \in \N}$ (see Subsection \ref{subsec:(FA)}). We are finally ready to prove the main result of this section:

\begin{thm} \label{thm:small_arc_stab}
The group $L$ acts on the $\R$-tree $(T,\d)$ non-trivially and by isometries.
Moreover, given two distinct points $\bar x,\bar y$ of $T$, there exists $m \in \N $ such that the pointwise $L$-stabilizer of the geodesic segment $[\bar x,\bar y]$ is isomorphic to a subgroup of
$G_{m-1}$.
\end{thm}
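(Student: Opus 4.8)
\noindent\emph{Overview.} The plan is to set up the $L$-action, record a few structural facts about the approximating trees, prove the arc-stabilizer statement, and then deduce non-triviality of the action from it.

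\smallskip\noindent\emph{The action.} The isometric $L_1$-action on $(T,\d)$ recalled just above (namely $g\circ\bar x:=\theta_1(g\circ x)$ whenever $\theta_1(x)=\bar x$) factors through $\psi_1\colon L_1\to L$. Indeed, since $L$ is the direct limit, $\ker\psi_1=\bigcup_{i\ge 1}\ker\phi_{1i}$, and if $h\in\ker\phi_{1i}$ then $\f_{1j}(h\circ x)=\phi_{1j}(h)\circ\f_{1j}(x)=\f_{1j}(x)$ for all $x\in T_1$ and all $j\ge i$, so $\hat\d(h\circ x,x)=0$ and $h$ acts trivially on $T$. This yields an isometric $L$-action on $T$ for which every $\theta_i\colon T_i\to T$ is $\psi_i$-equivariant. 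I would also record: each vertex stabilizer in $T_i$ is an $L_i$-conjugate of $M$ or of $M_i$; since the amalgam $L_i=M\ast_{G_{i-1}}M_i$ is non-degenerate and $G_{i-1}<M$, no such subgroup is contained in an $L_i$-conjugate of $G_{i-1}$, and hence each vertex group fixes exactly one vertex of $T_i$; iterating Lemma~\ref{lem:fold_props}(i), every $\phi_{ij}$ (hence every $\psi_i$) is injective on each vertex stabilizer of $T_i$; and by Lemma~\ref{lem:edge_stab}, $\St_{L_j}(\f_{ij}(e))=\phi_{ij}(\St_{L_i}(e))$ for every edge $e$ of $T_i$, with $\St_{L_i}(e)$ an $L_i$-conjugate of $G_{i-1}$.

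\smallskip\noindent\emph{Arc stabilizers.} Fix distinct $\bar x,\bar y\in T$. Using surjectivity of the $\theta_i$ and the argument in the proof of Proposition~\ref{prop:strong-lim}, I would choose $l$ large enough that $\bar x,\bar y$ lift to $x_l,y_l\in T_l$ with $\theta_l$ injective on the geodesic $[x_l,y_l]$ (hence $\f_{lj}$ injective on $[x_l,y_l]$ for all $j\ge l$), and large enough that $[x_l,y_l]$ meets at least three edges of $T_l$; then I pick two distinct vertices $w_1,w_2$ of $T_l$ on $[x_l,y_l]$ and put $\bar x':=\theta_l(w_1)$, $\bar y':=\theta_l(w_2)$, so $[\bar x',\bar y']\subseteq[\bar x,\bar y]$ and therefore $\St_L([\bar x,\bar y])\leqslant\St_L([\bar x',\bar y'])$. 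Write $[w_1,w_2]=\eta_1\cup\dots\cup\eta_n$ as a concatenation of edges of $T_l$. Since $\f_{lj}$ is injective on $[w_1,w_2]$, the image $\f_{lj}([w_1,w_2])$ is a geodesic of $T_j$, so $\St_{L_j}(\f_{lj}([w_1,w_2]))=\bigcap_{r=1}^n\St_{L_j}(\f_{lj}(\eta_r))=\bigcap_{r=1}^n\phi_{lj}(\St_{L_l}(\eta_r))$ by Lemma~\ref{lem:edge_stab}. The crucial point is that this last intersection equals $\phi_{lj}\bigl(\bigcap_r\St_{L_l}(\eta_r)\bigr)=\phi_{lj}(\St_{L_l}([w_1,w_2]))$: an element $g$ of the intersection is $\phi_{lj}(h_r)$ for a \emph{unique} $h_r\in\St_{L_l}(\eta_r)$ (because $\phi_{lj}$ is injective on vertex stabilizers of $T_l$, which contain the edge stabilizers), and for consecutive indices $h_r,h_{r+1}$ both lie in $\St_{L_l}(\eta_r\cap\eta_{r+1})$, on which $\phi_{lj}$ is injective, so $h_r=h_{r+1}$; the common value lies in $\St_{L_l}([w_1,w_2])$ and maps to $g$.

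\smallskip\noindent\emph{Finishing the arc-stabilizer statement.} An isometry of the $\R$-tree $T$ fixes $[\bar x',\bar y']$ pointwise iff it fixes the endpoints $\bar x',\bar y'$. Writing an element of $L$ as $g=\psi_l(h)$, $h\in L_l$, equivariance together with the stabilization of distances in a strong limit give: $g$ fixes $\theta_l(w_1)$ and $\theta_l(w_2)$ iff $\phi_{lj}(h)\in\St_{L_j}(\f_{lj}(w_1))\cap\St_{L_j}(\f_{lj}(w_2))=\St_{L_j}(\f_{lj}([w_1,w_2]))=\phi_{lj}(\St_{L_l}([w_1,w_2]))$ for all sufficiently large $j$. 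Since $\phi_{lj}$ is injective on the vertex stabilizer of $T_l$ containing $\St_{L_l}([w_1,w_2])$ and the $\phi_{lj}$ are compatible, this forces $hk^{-1}\in\ker\phi_{l j_0}\subseteq\ker\psi_l$ for some $k\in\St_{L_l}([w_1,w_2])$ and some $j_0$; hence $\St_L([\bar x',\bar y'])=\psi_l(\St_{L_l}([w_1,w_2]))$, which is isomorphic to $\St_{L_l}([w_1,w_2])$ (again because $\psi_l$ is injective on that vertex stabilizer), hence to a subgroup of the edge stabilizer $\St_{L_l}(\eta_1)$, hence to a subgroup of a conjugate of $G_{l-1}$. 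Thus $\St_L([\bar x,\bar y])$ is isomorphic to a subgroup of $G_{l-1}$, and $m:=l$ works.

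\smallskip\noindent\emph{Non-triviality.} Running the same computation for a \emph{single} edge of $T_l$ — which $\f_{lj}$ embeds isometrically for every $j$ by Lemma~\ref{lem:aux-2}, so that one reaches $\St_{L_l}(\eta_1)$ with no passage to a sub-arc — I would show that if $\overline M:=\psi_1(M)$ fixed a point of $T$ other than $\bar x:=\theta_1(x^{(1)})$ (where $x^{(1)}$ is the $M$-vertex of $T_1$, whose $\f_{1l}$-image is the $M$-vertex $x^{(l)}$ of $T_l$ by Lemma~\ref{lem:morph_summary}(1)), then $\overline M$ would fix a short sub-arc lifting to a single edge $\eta$ of $T_l$ incident to $x^{(l)}$, giving $\overline M\leqslant\psi_l(\St_{L_l}(\eta))$; but $\St_{L_l}(\eta)$ is a proper subgroup of $\St_{L_l}(x^{(l)})=M$ and $\psi_l$ is injective on $M$, so $\overline M$ would be properly contained in itself, a contradiction. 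Hence $\overline M$ fixes only $\bar x$; likewise $\overline M_1:=\psi_1(M_1)$ fixes only $\bar y:=\theta_1(y^{(1)})$ (whose lift in $T_l$ is a conjugate of $M$, to which the same reasoning applies). Moreover $\bar x\neq\bar y$ since $\theta_1$ is injective on the edge of $T_1$ joining $x^{(1)}$ and $y^{(1)}$ (Lemma~\ref{lem:aux-2}). If $L$ fixed some $\bar z\in T$, then $\overline M\leqslant L$ would fix $\bar z$, forcing $\bar z=\bar x$, and $\overline M_1\leqslant L$ would fix $\bar z=\bar x$, forcing $\bar x=\bar y$ — contradiction. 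So the $L$-action on $T$ is non-trivial.

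\smallskip\noindent\emph{Main obstacle.} The hard part is the arc-stabilizer computation: folds occur at every stage and the epimorphisms $\phi_{lj}$ are far from injective, so stabilizers cannot be transported naively along the sequence. The remedy is to reduce to stabilizers of \emph{vertices} of the approximating trees, using that (a) an isometry of an $\R$-tree fixes an arc iff it fixes its two endpoints and (b) by Lemma~\ref{lem:edge_stab} the $L_j$-stabilizer of the image of a geodesic of $T_l$ is exactly the $\phi_{lj}$-image of its $L_l$-stabilizer, together with the injectivity of the $\phi_{lj}$ on vertex stabilizers; this is what makes the limit $j\to\infty$ behave well. Everything else — the action descending to $L$ and the non-triviality — then follows formally.
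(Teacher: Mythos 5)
Your proposal is correct. For the main part --- the arc-stabilizer bound --- you follow essentially the same route as the paper: use strong convergence to find a stage $l$ at which the image of the geodesic contains a full edge (resp.\ a union of edges) of $T_l$ mapped isometrically into $T$, transport the condition of fixing the endpoints to a finite stage $j$ via the stabilization of distances, identify the stabilizer of the image path in $T_j$ as the $\phi_{lj}$-image of the stabilizer in $T_l$ (Lemma~\ref{lem:edge_stab}, plus your intersection argument using injectivity of $\phi_{lj}$ on vertex stabilizers, which the paper sidesteps by working with a single edge $\bar e \subseteq [\bar x,\bar y]$ rather than a multi-edge segment), and conclude via injectivity of $\psi_l$ on vertex stabilizers. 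Where you genuinely diverge is the non-triviality of the action: the paper assumes a global fixed point, takes a finite generating set of $L_1$, and uses Proposition~\ref{prop:strong-lim} to push the fixed point down to some $T_j$, contradicting the non-triviality of the $L_j$-action there; you instead deduce non-triviality from the arc-stabilizer computation itself, showing that $\psi_1(M)$ and $\psi_1(M_1)$ each fix a unique point of $T$ and that these two points are distinct (via Lemma~\ref{lem:aux-2}). Both arguments are valid; the paper's is shorter and leans on finite generation of $L_1$, while yours yields the extra information that the images of the vertex groups have singleton fixed-point sets in the limit tree and avoids invoking a generating set.
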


\begin{proof} By definition, $L=L_1/N$ for the normal subgroup $N=\bigcup_{i=2}^\infty \ker(\phi_{1i})\lhd L_1$. The natural action of $L_i$ on $T_i$ induces an action
of $L_1$ on $T_i$, for which every element $h \in \ker(\phi_{1i})$ acts as identity on $T_i$. Consider any point $x \in T_1$ and any $h \in N$. Then $h \in \ker(\phi_{1i})$ for some
$i \ge 2$, hence $\d_j(\f_{1j}(x),\f_{1j}(h \circ x))=0$ for all $j \ge i$. Therefore $h\circ x \sim x$, thus $h$ acts as identity on $T$.
Therefore the above action of $L_1$ on $T$ naturally induces an isometric action of $L=L_1/N$ on $T$.
If this action was trivial, then there would exist a point $y \in T_1$ such that $\hat\d(y,g \circ y)=0$ for all $g \in L_1$. Let $\{g_1,\dots,g_n\}$ be some finite
generating set of $L_1$. By Proposition \ref{prop:strong-lim}, there exists $j \in \N$ such that $\d_j(\f_{1j}(y),\f_{1j}(g_l\circ y))=
\d_j(\f_{1j}(y),\phi_{1j}(g_l)\circ\f_{1j}(y))=0$ for any $l \in \{1,\dots, n\}$.
Hence the point $\f_{1j}(y) \in T_j$ is fixed by $\phi_{1j}(g_1), \dots,\phi_{1j}(g_n)$, which generate $L_j=\phi_{1j}(L_1)$.
This contradicts the fact that the action of $L_j$ on $T_j$ is
non-trivial. Therefore the action of $L$ on $T$ must also be non-trivial.

Suppose that $\bar x,\bar y$ are two distinct points in $T$ and $x,y$ are some preimages of $\bar x, \bar y$ in $T_1$ respectively.
By Proposition \ref{prop:strong-lim}, there is $k \in \N$ such that for any $m \ge k$ the restriction of the natural map $\theta_m:(T_m,\d_m) \to (T,\d)$
to $[\f_{1m}(x), \f_{1m}(y)]$ is an isometry.  Choose some $m \ge k$ so that $2^{2-m}<\d(\bar x,\bar y)$. Then the distance $\d_m(\f_{1m}(x), \f_{1m}(y))=\d(\bar x,\bar y)$
is greater than twice the edge length in $T_m$, therefore the geodesic segment $[\f_{1m}(x), \f_{1m}(y)]$ contains some edge $e$ of $T_m$. It follows that
$\bar e:=\theta_m(e)$ is contained in the geodesic segment $[\bar x, \bar y]=\theta_m([\f_{1m}(x), \f_{1m}(y)])$ of $T$.
Evidently, $\St_L([\bar x,\bar y])\leqslant \St_L(\bar e)$, so it remains to show that $\St_L(\bar e)$ is isomorphic to a subgroup of $G_{m-1}$.

Assume that $\bar g \in \St_L(\bar e)$ and take any $g \in L_1$ with $\psi_1(g)=\bar g$, where the epimorphisms $\psi_i:L_i \to L$, $i \in \N$,
were defined in Subsection \ref{subsec:(FA)}. Choose any points $s,t\in T_1$ that
are preimages of the endpoints $e_-$ and $e_+$ of $e$ respectively.
Since $\bar g \in \St_L(\bar e)$, by the definition of the action of $L$ on $T$, the element $g \in L_1$ must fix ${\bar e}_-=\theta_m(e_-)$ and ${\bar e}_+=\theta_m(e_+)$ in $T$, thus
$g\circ s \sim s$ and $g \circ t \sim t$ in $T_1$. Therefore, according to Proposition~\ref{prop:strong-lim}, there exists
$j \ge m$ such that $\d_j(\f_{1j}(g \circ s),\f_{1j}(s))=0$ and $\d_j(\f_{1j}(g \circ t),\f_{1j}(t))=0$ in $T_j$. Consequently
$\phi_{1j}(g) \circ \f_{1j}(s)=\f_{1j}(g \circ s)=\f_{1j}(s)$ and
$\phi_{1j}(g) \circ \f_{1j}(t)=\f_{1j}(g \circ t)=\f_{1j}(t)$ in $T_j$, yielding that
$\phi_{1j}(g) \circ \f_{mj}(e_-)=\f_{mj}(e_-)$ and $\phi_{1j}(g) \circ \f_{mj}(e_+)=\f_{mj}(e_+)$ in $T_j$. Recall that $\f_{mj}(e)$ is a simple path in the tree $T_j$
by Lemma~\ref{lem:aux-2}, so it is completely determined by its endpoints, and thus $\phi_{1j}(g) \circ \f_{mj}(e)=\f_{mj}(e)$.
Now we can apply Lemma \ref{lem:edge_stab}, claiming that
there exists $h \in \St_{L_m}(e)$ such that $\phi_{1j}(g)=\phi_{mj}(h)$. Therefore, in view of \eqref{eq:compos-phi}, we get
$$\bar g=\psi_1(g)=\psi_j(\phi_{1j}(g))=\psi_j(\phi_{mj}(h))=\psi_m(h),$$
which shows that $\St_L(\bar e) \leqslant \psi_m\left(\St_{L_m}(e) \right)$. Since $\psi_m$ is injective on vertex and edge stabilizers  for the action of $L_m$ on $T_m$
(this follows from Lemma \ref{lem:fold_props}.(i)),
we can conclude that $\psi_m\left(\St_{L_m}(e) \right) \cong \St_{L_m}(e) \cong G_{m-1}$, as claimed.
\end{proof}

\section{Construction of a suitable group $M$} \label{sec:M}
In this section we suggest a construction of a finitely generated group $M$ together with its ascending sequence of subgroups
$G_0<G_1<\dots$ and elements $a_i \in M$, $i \in \N$ that satisfy properties \descref{(P1)}-\descref{(P4)} above. (Unfortunately Thompson's group $V$, together with
its subgroups $G_i$ and elements $a_i$, discussed in Subsection \ref{subsec:V}, does not enjoy \descref{(P4)}. Indeed, given $i \in \N$, choose  any
$c \in \St_V([0,3/2^{i+2}))\subset G_i$ such that $c \notin \St_V([0,1/2^i)) =G_{i-1}$. Then $a_i c a_i^{-1} \in \St_V([0,1/2^{i+2}))=G_{i+1}$, hence
$\langle G_i,G_i^{a_ica_i^{-1}} \rangle \subseteq G_{i+1}$ in $V$.)

The construction will be based on the small cancellation theory over (word) hyperbolic groups proposed by Gromov in \cite{Gromov} and developed by Olshanskii in \cite{Olsh-G_sbgps}.
For convenience we will actually utilize a generalization of Olshanskii's techniques obtained by the author in \cite{Min-pap3}.

Recall, that a group is said to be \emph{elementary} if it contains a cyclic subgroup of finite index; in particular any finite group is elementary.
For any non-elementary subgroup $H$ of a hyperbolic group $F$ there exists a unique maximal finite subgroup $E_F(H) \leqslant F$ that is normalized
by $H$ in $F$ (see \cite[Prop. 1]{Olsh-G_sbgps}).
Given a non-elementary hyperbolic group $F$, we will say that a subgroup $H\leqslant F$ is a \emph{$G$-subgroup}  if $H$ is non-elementary and
$E_F(H)=\{1\}$ (according to \cite[Thm. 1]{Olsh-G_sbgps}, this is a special case of Olshanskii's definition of a $G$-subgroup from \cite[p. 366]{Olsh-G_sbgps}).
Evidently, $E_F(F) \leqslant E_F(H)$ for any non-elementary subgroup $H$ of $F$. In particular, if $F$ contains at least one $G$-subgroup then $E_F(F)=\{1\}$.

Let $H$ be a subgroup of a group $F$ and let $Q \subseteq K$.  Following  \cite{Min-pap3} we will say that \emph{$Q$ is small
relative to $H$} if for any two finite subsets $P_1, P_2 \subseteq F$, $H$ is not contained in the product $P_1 Q^{-1} Q P_2$ in $F$.

Given a hyperbolic group $F$ with a fixed finite generating set $X$, let $\Gamma(F,X)$ denote the Cayley graph of $F$ with respect to $X$.
Recall also that a subset $Q$ of $F$
(or of $\Gamma(F,X)$) is said to be \emph{quasiconvex}  if there exists $\varepsilon >0$ such that for any pair of
elements $u, v \in Q$ and any geodesic segment $p$ connecting $u$ and $v$, $p$ belongs to a
closed $\varepsilon$-neighborhood of $Q$ in $\Gamma(F,X)$. It is well known that quasiconvexity of a subset is independent of the choice of the finite generating set $X$ of $F$
(see \cite{Gromov}).

The following statement is a special case of \cite[Thm. 1]{Min-pap3}:
\begin{lemma} \label{lem:g-sbgps} Let $H_1$, $H_2$ be $G$-subgroups of a non-elementary hyperbolic group
$F$. Assume that $Q \subseteq F$ is a quasiconvex subset which is small relative to $H_i$, $i=1,2$.
Then there exist a group $K$ and an epimorphism $\xi: F \to K$ such that
\begin{itemize}
  \item[(i)] $K$ is a non-elementary hyperbolic group;
  \item[(ii)] $\xi$ is injective on $Q$;
  \item[(iii)] $\xi(H_1)=\xi(H_2)=K$;
  \item[(iv)] $E_K(K)=\{1\}$.
\end{itemize}
\end{lemma}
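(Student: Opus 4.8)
\medskip\noindent\textbf{Proof proposal.} The plan is to obtain the statement as the specialisation of the general quotient theorem \cite[Thm.~1]{Min-pap3} (which itself refines Olshanskii's small cancellation theory over hyperbolic groups, see \cite{Olsh-G_sbgps}) to the two-element family of $G$-subgroups $\{H_1,H_2\}$ together with the quasiconvex set $Q$. So my first step would be to observe that the input hypotheses demanded by that theorem are exactly the ones we are given: $F$ is non-elementary hyperbolic, each $H_i$ is a $G$-subgroup (non-elementary with $E_F(H_i)=\{1\}$), and $Q$ is quasiconvex and small relative to each $H_i$. Conclusions (i)--(iv) would then be read off directly from the conclusion of \cite[Thm.~1]{Min-pap3}.

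To indicate what the cited theorem does, I would fix a finite generating set $X$ of $F$. For any epimorphism $\xi\colon F\to K$ one has $K=\langle\xi(X)\rangle$, so forcing $\xi(H_1)=K$ amounts to arranging, for every $x\in X$, that $\xi(x)\in\xi(H_1)$, i.e.\ that some $xh_x^{-1}$ with $h_x\in H_1$ lies in $\ker\xi$; symmetrically for $H_2$ one kills elements $x(h'_x)^{-1}$ with $h'_x\in H_2$. The content of \cite{Min-pap3} is that the $h_x$ and $h'_x$ can be chosen so that these relators form a small cancellation system over $F$: non-elementarity of $H_i$ together with $E_F(H_i)=\{1\}$ supplies, inside each $H_i$, elements of arbitrarily large translation length in general position, which lets one take the relators long and suitably separated; and smallness of $Q$ relative to each $H_i$ keeps the relators far enough from $Q$ that no new relation between points of $Q$ is created, so $\xi$ stays injective on $Q$. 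The standard consequences of the small cancellation condition then give that $K$ is again non-elementary hyperbolic with $\xi(H_1)=\xi(H_2)=K$, while $E_K(K)=\{1\}$ is part of the output of \cite[Thm.~1]{Min-pap3} --- morally because $E_K(K)$ is controlled by $\xi(E_F(F))$ and $E_F(F)=\{1\}$ already (as $F$ contains a $G$-subgroup), and the small cancellation geometry rules out a larger finite normal subgroup appearing in the quotient.

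The genuinely hard part lives inside \cite[Thm.~1]{Min-pap3}: it is the simultaneous bookkeeping of choosing, for all $x\in X$ at once, relators witnessing both $\xi(x)\in\xi(H_1)$ and $\xi(x)\in\xi(H_2)$ so that they jointly satisfy the combinatorial small cancellation hypotheses and stay compatible with $Q$, and then verifying that hyperbolicity, the two surjectivity claims, and the triviality of $E_K(K)$ are all inherited by the quotient. None of that has to be repeated here; I would only check that the data $(F,\{H_1,H_2\},Q)$ satisfies the input conditions of that theorem, which is immediate from the hypotheses of the lemma.
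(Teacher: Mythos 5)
Your proposal matches the paper exactly: the paper offers no independent argument for this lemma but simply presents it as a special case of \cite[Thm.~1]{Min-pap3}, which is precisely your reduction, and your verification that the hypotheses of the lemma are exactly the input conditions of that theorem is all that is required. The additional exposition you give of the internal small cancellation mechanism is harmless but not part of the paper's (one-line) justification.
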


Below we will only be interested in the case when the quasiconvex subset $Q$ is a union of finitely many quasiconvex subgroups.
In this case smallness of $Q$ relative to $H$ is easy to check (see \cite[Thm. 3]{Min-pap3}):

\begin{lemma}\label{lem:small} Suppose that $C_1$, $\dots$, $C_k$ are quasiconvex subgroups of a hyperbolic group $F$ and $H \leqslant F$ is an arbitrary subgroup.
Let $Q:=\bigcup_{i=1}^k C_i \subseteq F$. Then $Q$ is small relative to $H$ provided $|H:(H \cap C_i^f)|=\infty$ for every $i=1,\dots,k$ and each $f \in F$.
\end{lemma}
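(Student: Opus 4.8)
The plan is to reduce the statement to a question about covering $H$ by finitely many double cosets of conjugates of the $C_i$, and then to rule such a covering out by a mixture of the B.~H.~Neumann covering lemma and the boundary dynamics of quasiconvex subgroups of hyperbolic groups.

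First I would rewrite the relevant product. Since each $C_i$ is a subgroup, $Q^{-1}Q=\bigcup_{i,j}C_iC_j$, so for finite $P_1,P_2\subseteq F$ one has $P_1Q^{-1}QP_2=\bigcup aC_iC_jb$, a finite union over $i,j\in\{1,\dots,k\}$, $a\in P_1$, $b\in P_2$. Each piece normalises to $aC_iC_jb=C_i^a\,(ab)\,C_j^{b^{-1}}$, a single $\bigl(C_i^a,C_j^{b^{-1}}\bigr)$-double coset. Conjugates of quasiconvex subgroups are quasiconvex, and $|H:H\cap (C_i^a)^w|=|H:H\cap C_i^{aw}|=\infty$ for every $w$ (likewise for $C_j^{b^{-1}}$), so no generality is lost. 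Thus, arguing by contradiction, I may assume $H\subseteq\bigcup_{l=1}^N D_lg_lE_l$, where each $D_l,E_l$ is a conjugate of some $C_i$ --- hence quasiconvex and of infinite index in $F$ (a finite index subgroup would meet $H$ in finite index) --- and all of them are transverse to $H$ in the above sense.

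Next I would separate the double cosets. Call $D_lg_lE_l$ \emph{tame} if $|E_l:E_l\cap D_l^{g_l^{-1}}|<\infty$ or $|D_l:D_l\cap E_l^{g_l}|<\infty$. A tame double coset is a finite union of left cosets of $D_l$ (or right cosets of $E_l$); intersecting such a coset $dg_lE_l$ with $H$ and fixing $h_0\in H\cap dg_lE_l$, a short computation gives $h_0^{-1}(H\cap dg_lE_l)\subseteq H\cap E_l$, so $H\cap D_lg_lE_l$ is contained in finitely many cosets of subgroups of the form $H\cap(\text{conjugate of some }C_i)$, each of infinite index in $H$. Replacing every tame double coset in the cover by this finite family of cosets, I reduce to $H\subseteq\bigl(\bigcup_{l\ \mathrm{wild}}D_lg_lE_l\bigr)\cup\bigl(\bigcup_{s=1}^{M}h_sK_s\bigr)$, where each $K_s\le H$ has infinite index. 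If no wild double cosets remain, the B.~H.~Neumann lemma (a group is not a finite union of cosets of subgroups of infinite index) gives the contradiction at once.

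For the wild double cosets I would argue in the Gromov boundary $\partial F$. Since $|H:H\cap C_1|=\infty$, the group $H$ is infinite, so $\Lambda H\neq\varnothing$ and $\Lambda H\subseteq\bigcup_l\bigl(\overline{Y_l}\cap\partial F\bigr)$ as $Y_l$ runs over the sets in the cover. For a coset $h_sK_s$ one has $\overline{h_sK_s}\cap\partial F=h_s\Lambda K_s\subseteq\Lambda\bigl(C_i^{f}\bigr)$ for a suitable conjugate, and for a wild $D_lg_lE_l$ the boundary points are captured by $\Lambda D_l$ together with a locally finite family of translates of the ``shadow'' $g_l\Lambda E_l$; in every case $\overline{Y_l}\cap\partial F$ lies inside a limit set of a proper quasiconvex subgroup of $F$ up to a discrete orbit, hence is nowhere dense in $\partial F$. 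Using minimality of the $H$-action on $\Lambda H$ together with two applications of the Baire category theorem (valid since $\Lambda H$ is compact and, in the non-elementary case, perfect), one forces $\Lambda H$ to be covered by finitely many limit sets $\Lambda\bigl(C_i^{f_1}\bigr),\dots,\Lambda\bigl(C_i^{f_r}\bigr)$ of conjugates of a single $C_i$; this is the point at which the transversality hypothesis $|H:H\cap C_i^f|=\infty$ must be driven to a contradiction (choosing a loxodromic element of $H$ and using that a loxodromic with both endpoints in the limit set of a quasiconvex subgroup has a power inside it). The virtually cyclic case is handled separately and more directly: by the pigeonhole principle some $Y_l$ contains arbitrarily high powers of a generator of a finite-index $\Z$, so an endpoint of $H$ lies in $\overline{Y_l}$, which is again excluded by $|H:H\cap C_i^f|=\infty$. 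I expect the genuinely delicate point to be the boundary-size estimate for the wild double cosets, and, with it, the passage from ``$\Lambda H$ is covered by finitely many quasiconvex limit sets'' to an actual contradiction when $H$ itself need not be quasiconvex --- which is precisely where I would lean on \cite[Thm.~3]{Min-pap3} rather than reproduce the argument in full.
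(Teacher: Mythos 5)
First, a point of reference: the paper does not prove this lemma at all --- it is quoted directly as a special case of \cite[Thm. 3]{Min-pap3} (``smallness of $Q$ relative to $H$ is easy to check (see [Thm.~3, Min-pap3])''), so there is no in-paper argument to compare yours against. Your opening reduction is correct and is surely how any proof must begin: $P_1Q^{-1}QP_2$ is a finite union of double cosets $C_i^a(ab)C_j^{b^{-1}}$ of conjugates of the $C_i$, each of which still meets $H$ in infinite index, and your treatment of the tame double cosets (reduction to finitely many cosets of the infinite-index subgroups $H\cap D_l$, then B.~H.~Neumann) is complete.

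The wild case, however, has two genuine gaps. First, the assertion that $\overline{D_lg_lE_l}\cap\partial F$ ``lies inside a limit set of a proper quasiconvex subgroup of $F$ up to a discrete orbit, hence is nowhere dense'' is not justified: the boundary of such a double coset is $\Lambda D_l\cup\bigl(\bigcup_{d\in D_l}dg_l\Lambda E_l\bigr)$, an \emph{infinite} union of translates accumulating on $\Lambda D_l$; both this identification and the local finiteness of the translates away from $\Lambda D_l$ need quasiconvexity arguments (essentially the content of \cite{Min-pap1}) that you do not supply, and in any case what your Baire argument requires is nowhere-density of the intersections \emph{in $\Lambda H$}, not in $\partial F$, which is a different statement. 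Second, and more seriously, the endgame is missing: from $\Lambda H\subseteq\bigcup_j\Lambda(C_i^{f_j})$ you do not derive a contradiction. Your device --- a loxodromic $h\in H$ with both endpoints in $\Lambda(C_i^f)$ has a power lying in some conjugate of $C_i$ --- only shows that $H\cap C_i^{f'}$ is \emph{infinite} for some $f'$, not that it has finite index in $H$, which is what would contradict the hypothesis. You acknowledge this and propose to ``lean on'' \cite[Thm.~3]{Min-pap3} at precisely this point; but that theorem \emph{is} the statement being proved, so the fallback turns the proposal into a partial reconstruction plus a citation of the original source rather than a proof. Citing \cite{Min-pap3} outright is perfectly legitimate (it is exactly what the paper does), but then the surrounding boundary-dynamics sketch is not doing the work you present it as doing.
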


It is obvious that any finite subgroup of a hyperbolic group is quasiconvex, and it is well known that any infinite cyclic subgroup is quasiconvex
(see, for example, \cite[Cor. 3.4]{Mih}). Since the union of a finite collection of quasiconvex subsets is again quasiconvex
(see \cite[Prop. 3.14]{ZG} or \cite[Lemma~2.1]{Min-pap1}), we can conclude that in any hyperbolic group $F$ a finite union of elementary subgroups is quasiconvex.

The required group $M$ will be obtained as a direct limit of a sequence of hyperbolic groups $K_j$, $j=0,1,2,\dots$.
We start with a strictly increasing sequence $G_0<G_1< G_2 < \dots$ of \emph{finite} groups such that $|G_1|>2$ and the following condition is satisfied:
\begin{equation}\label{eq:finite_gp_prop}
\mbox{ for each } i \in \N, \mbox{ if } N \lhd G_i \mbox{ and } N \subseteq G_{i-1} \mbox{ then } N=\{1\},
\end{equation}
i.e., $G_{i-1}$ does not contain non-trivial normal subgroups of $G_i$. As a matter of convenience we will assume that $G_0=\{1\}$ is the trivial group, and
we will let $\gamma_{i-1}:G_{i-1} \to G_{i}$ denote the embedding of $G_{i-1}$ into $G_i$, $i \in \N$.

The obvious choice would be to take $G_i$'s as a sequence of finite simple groups: e.g., $G_i=\alt(i+4)$ for $i=1,2,\dots$, equipped with the standard
embedding of $\alt(j)$ into $\alt(j+1)$ (as the subgroup leaving the last element of $\{1,2,\dots,j+1\}$ fixed).
On the opposite spectrum, one can choose $G_i$'s to be nilpotent, by letting $G_i=\mathrm{UT}(i+2,\mathbb{F})$  be the group of unitriangular matrices over a finite field $\mathbb{F}$, $i=1,2,\dots$, where
$\mathrm{UT}(j,\mathbb{F})$ is naturally embedded into $\mathrm{UT}(j+1,\mathbb{F})$ as the stabilizer of the last vector from the standard basis of $\mathbb{F}^{j+1}$.

Now, take  any non-elementary hyperbolic group $K_0$ with property (FA) (e.g., a hyperbolic triangle group -- see \cite[I.6.3, Ex. 5]{Serre}).
Without loss of generality we can suppose that
$E_{K_0}(K_0)=\{1\}$ (to achieve this, one can always replace $K_0$ with the quotient $K_0/E_{K_0}(K_0)$). We can also assume that $G_0=\{1\} \leqslant K_0$, and
take $Q_0:=\{1\} \subseteq K_0$.

\begin{lemma}\label{lem:K_j} There exist a sequence of groups $K_j$, $j \in \N$, epimorphisms
$\zeta_{j-1}:K_{j-1} \to K_{j}$, subsets $Q_j \subseteq K_j$ and elements $t_j \in K_j$  such that the following properties are satisfied for all $j \in \N$:
\begin{enumerate}[(a)]
  \item\label{it:a} $K_j$ is a non-elementary hyperbolic group with $E_{K_j}(K_j)=\{1\}$;
  \item\label{it:b} $Q_j$ is a finite union of elementary subgroups of $K_j$;
  \item\label{it:c} $\zeta_{j-1}$ is injective on $Q_{j-1}$, and $\zeta_{j-1}(Q_{j-1}) \subseteq Q_{j}$;
  \item\label{it:d} $G_j \leqslant K_j$ and $G_j \subseteq Q_j$;
  \item\label{it:e} $\zeta_{j-1}(G_{j-1})=G_{j-1}\subseteq G_{j}$ and $\zeta_{j-1}(g)=\gamma_{j-1}(g)$ for every $g \in G_{{j-1}}$;
  \item\label{it:f} $t_j$ centralizes $\zeta_{j-1}(G_{j-1})=G_{j-1}$ in $K_j$;
  \item\label{it:g} $K_j=\langle G_j,G_j^{t_j} \rangle$;
  \item\label{it:h}  for every $c \in G_j \setminus G_{j-1}$,  $c^{-1} c^{t_jct_j^{-1}}$ has infinite order in $K_j$, and
        $\langle c^{-1} c^{t_jct_j^{-1}} \rangle \subseteq Q_j$.
\end{enumerate}
\end{lemma}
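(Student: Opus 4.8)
The plan is to construct the data by induction on $j$, with the given $K_0$, $Q_0=\{1\}$ serving as the base case. Assume $K_{j-1}$, $Q_{j-1}$ and (for $j\ge 2$) the element $t_{j-1}$ and the epimorphism $\zeta_{j-2}$ have already been built, satisfying \ref{it:a}--\ref{it:h}. First I would produce an auxiliary hyperbolic group $F$ containing $K_{j-1}$, a copy of $G_j$ extending $G_{j-1}\le K_{j-1}$, and an element $t_j$ centralizing $G_{j-1}$; namely
\[
F:=\bigl(K_{j-1}\ast_{G_{j-1}}G_j\bigr)\ast_{G_{j-1}},
\]
the HNN extension of the amalgam $B:=K_{j-1}\ast_{G_{j-1}}G_j$ (with $G_{j-1}\le K_{j-1}$ identified with $\gamma_{j-1}(G_{j-1})\le G_j$) along the identity automorphism of the finite subgroup $G_{j-1}\le B$, with stable letter $t_j$. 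Since every edge group occurring here ($G_{j-1}$, and the trivial group when $j=1$) is finite, $F$ is hyperbolic by the combination theorem for amalgams and HNN extensions over finite subgroups, and it is non-elementary because it contains $K_{j-1}$. The subgroups $K_{j-1}$, $G_j$ and the element $t_j$ embed in $F$ in the evident way, $t_j$ centralizes $G_{j-1}$ in $F$, and a short normal-form computation (Britton's Lemma) gives $G_j\cap G_j^{t_j}=G_{j-1}$, so $\langle G_j, G_j^{t_j}\rangle\cong G_j\ast_{G_{j-1}}G_j$ inside $F$.

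Next I would single out the two $G$-subgroups of $F$ to which Lemma \ref{lem:g-sbgps} will be applied: $H_1:=K_{j-1}\le F$ and $H_2:=\langle G_j, G_j^{t_j}\rangle\le F$. Both are non-elementary: $H_1$ by the inductive hypothesis, and $H_2=G_j\ast_{G_{j-1}}G_j$ because the index $[G_j:G_{j-1}]$ is at least $3$ when $j\ge2$ (an index-$2$ subgroup of $G_j$ would be normal and contained in $G_{j-1}$, forcing $G_{j-1}=\{1\}$ by \eqref{eq:finite_gp_prop}, which fails for $j\ge 2$) and $|G_1|>2$ when $j=1$; in either case $H_2$ has infinitely many ends. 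The crucial point, which I expect to be the main obstacle, is to verify that $E_F(H_1)=E_F(H_2)=\{1\}$, i.e.\ that $H_1$ and $H_2$ are indeed $G$-subgroups of $F$. For $H_1$: writing $E:=E_F(K_{j-1})$, the finite subgroup $E\cap K_{j-1}$ is normal in $K_{j-1}$, hence trivial since $E_{K_{j-1}}(K_{j-1})=\{1\}$; now $K_{j-1}$ acts on the Bass--Serre tree of $F$ fixing a unique vertex (the edge stabilizers being finite), and, $E$ being normal in $K_{j-1}E$, the equivariant nearest-point projection onto the $E$-fixed subtree forces $E$ to fix that vertex too, and then a descent into the Bass--Serre tree of the amalgam $B$ similarly forces $E\le K_{j-1}$, whence $E=\{1\}$. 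A variant of this tree argument — using that the non-elementary group $H_2$ fixes no vertex of the $F$-tree and hence acts loxodromically on it — gives $E_F(H_2)=\{1\}$; in particular $E_F(F)=\{1\}$ as well.

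Then I would choose the quasiconvex subset to be preserved. Set $z_c:=c^{-1}c^{t_j c t_j^{-1}}$ for $c\in G_j\setminus G_{j-1}$, and let $Q:=Q_{j-1}\cup G_j\cup\bigcup_{c\in G_j\setminus G_{j-1}}\langle z_c\rangle$. Since $c,c^{-1}\notin G_{j-1}$, the word $c^{-1}t_j c t_j^{-1}c t_j c^{-1}t_j^{-1}$ representing $z_c$, together with all of its powers, is Britton-reduced, so each $z_c$ has infinite order in $F$; hence $Q$ is a finite union of elementary (finite or infinite cyclic) subgroups of $F$ and is therefore quasiconvex. Moreover, for any elementary subgroup $C$ occurring in $Q$ and any $f\in F$, the intersection $H_i\cap C^f$ is elementary, hence of infinite index in the non-elementary group $H_i$, so $Q$ is small relative to $H_1$ and to $H_2$ by Lemma \ref{lem:small}. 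Now Lemma \ref{lem:g-sbgps} yields a non-elementary hyperbolic group $K_j$ with $E_{K_j}(K_j)=\{1\}$ and an epimorphism $\xi\colon F\to K_j$ that is injective on $Q$ and satisfies $\xi(H_1)=\xi(H_2)=K_j$.

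Finally I would set $\zeta_{j-1}:=\xi|_{K_{j-1}}$, $Q_j:=\xi(Q)$, and write $G_j$, $t_j$ also for their images $\xi(G_j)$, $\xi(t_j)$ in $K_j$. Then property \ref{it:a} and the surjectivity of $\zeta_{j-1}$ follow from $\xi(H_1)=K_j$; \ref{it:g} follows from $\xi(H_2)=K_j$; properties \ref{it:b}--\ref{it:d} follow from $Q_j=\xi(Q)$ and injectivity of $\xi$ on $Q$ (so that $\xi(G_j)\cong G_j$ and $\xi(\langle z_c\rangle)\cong\Z$, and each $\xi(C_i)$ is elementary); \ref{it:e} and \ref{it:f} are inherited from the corresponding facts in $F$; and \ref{it:h} holds because $z_c$ has infinite order in $F$ and $\langle z_c\rangle\subseteq Q$, so $\xi(z_c)=c^{-1}c^{t_jct_j^{-1}}$ has infinite order in $K_j$ and $\langle\xi(z_c)\rangle=\xi(\langle z_c\rangle)\subseteq Q_j$. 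Everything after Step 2 is routine bookkeeping around Lemmas \ref{lem:g-sbgps} and \ref{lem:small} and Britton's Lemma; the part that requires genuine work is the verification that $H_1$ and $H_2$ are $G$-subgroups of $F$, via the analysis of the action of $F$ on the Bass--Serre tree of its splitting.
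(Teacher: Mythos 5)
Your proposal is correct and follows essentially the same route as the paper: the same auxiliary hyperbolic group $F_n=(K_{n-1}\ast_{G_{n-1}}G_n)\ast_{G_{n-1}}$, the same quasiconvex set $Q_n$, the same two $G$-subgroups $K_{n-1}$ and $\langle G_n,G_n^{t_n}\rangle$, and the same application of Lemmas \ref{lem:g-sbgps} and \ref{lem:small} together with a Bass--Serre tree argument for the triviality of the $E_F(\cdot)$'s. The only step you leave as ``a variant'' --- showing $E_F(H_2)=\{1\}$ --- is completed in the paper by observing that $E_F(H_2)\subseteq G_n\cap G_n^{t_n}=G_{n-1}$ is normal in $G_n$ and hence trivial by \eqref{eq:finite_gp_prop}, exactly the normality condition you already invoke elsewhere.
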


\begin{proof} The group $K_0$ and the subset $Q_0\subseteq K_0$ have already been defined. So, arguing by induction we can assume that
for some $n \in \N$ we have already constructed the groups $K_0,\dots,K_{n-1}$, together with epimorphisms
$\zeta_{j-1}:K_{j-1} \to K_{j}$, subsets $Q_j \subseteq K_j$ and elements $t_j \in K_j$, $j=1,\dots,n-1$, satisfying properties \eqref{it:a}-\eqref{it:h} above.

In order to construct the group $K_n$, define an auxiliary group $F_n$ by the following presentation:
$$F_n= \langle K_{n-1},G_n,t_n \mid g=\gamma_{n-1}(g), ~t_n g t_n^{-1}=g \mbox{ for all }g \in G_{n-1}  \rangle.$$
In other words, $F_n$ is an HNN-extension of the free amalgamated product of $K_{n-1}$ with $G_n$ along $G_{n-1}=\gamma_{n-1}(G_{n-1})$ -- see Figure \ref{fig:F_n}.

\begin{figure}[ht]\begin{center}
\input{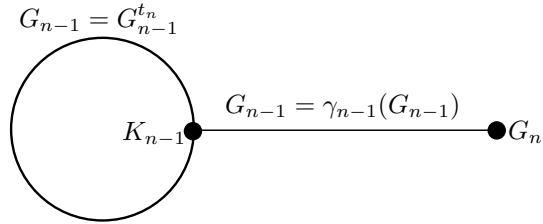}
\caption{\label{fig:F_n} The graph of groups for $F_n$.}
\end{center}
\end{figure}

According to this definition, $F_n$ is the fundamental group of a finite graph of groups with hyperbolic vertex groups
(as $K_{n-1}$ is hyperbolic by \eqref{it:a} and $|G_n|<\infty$) and finite edge groups (as $|G_{n-1}|<\infty$). Therefore $F_n$ is also a hyperbolic group
(e.g., by the Combination theorem of Bestvina and Feighn \cite{BF-comb, BF-correction}). Clearly $F_n$ is non-elementary and  $t_n$ centralizes $G_{n-1}$ in $F_n$.

Let $Q_n$ be the subset of $F_n$ defined by $$Q_n:=Q_{n-1}\cup G_n  \cup \bigcup_{c \in G_n\setminus G_{n-1}}
\langle c^{-1} c^{t_nct_n^{-1}} \rangle.$$
Then $Q_n$ is a finite union of elementary subgroups in $F_n$. Hence $Q_n$ is quasiconvex in $F_n$, and, by Lemma \ref{lem:small}, $Q_n$ is
small relative to any non-elementary subgroup $H \leqslant F_n$.

Let us check that  $K_{n-1}$ and $H:=\langle G_n,G_n^{t_n} \rangle$ are  $G$-subgroups of $F_n$. The subgroup $K_{n-1}$ is non-elementary by \eqref{it:a}.
On the other hand, it is easy to see that $H$ is isomorphic to the free amalgamated product $G_n*_{G_{n-1}=G_{n-1}^{t_n}}G_n^{t_n}$, which
contains non-abelian free subgroups because $|G_n:G_{n-1}|=|G_n^{t_n}:G_{n-1}^{t_n}|>2$ (by \eqref{eq:finite_gp_prop} as $|G_1|>2$) -- see \cite[Thm. 6.1]{Bass}. Therefore $H$ is also non-elementary.

In order to check the second part of the definition of a $G$-subgroup we will need the following auxiliary lemma:

\begin{lemma}\label{lem:f_n_sb} Suppose that $F$ is a group acting on a simplicial tree $S$ without edge inversions and $v$ is a vertex of $S$ such that
$H:=\St_F(v)$ is finitely generated and does not fix any edge of $S$. If $E \leqslant F$ is a finite subgroup normalized by $H$ in $F$ then $E \subseteq H$.
\end{lemma}
\begin{proof} Since $|E|<\infty$, the fixed point set $\Fix(E)$ is a non-empty convex subtree of $S$ (cf. \cite[I.6.3, Ex. 1]{Serre}) that is invariant under the action of $H$, as
$E$ is normalized by $H$. By the assumptions, $v\in \Fix(H)\neq\emptyset$, therefore every element $h \in H$ fixes some point of the subtree $\Fix(E)$
(cf. \cite[I.6.4, Cor. 2]{Serre}). Thus $H$ acts on the tree $\Fix(E)$ so that each element acts as an elliptic isometry. Since $H$ is finitely generated,
we can conclude that $H$ fixes some vertex $u \in \Fix(E)$ (see \cite[I.6.5, Cor. 3]{Serre}). But $v$ is the only vertex of $S$ fixed by $H$ because $H$ does
not fix any edge of $S$. Hence $v=u \in \Fix(E)$, implying that $E \subseteq \St_F(v)=H$, as claimed.
\end{proof}

Now let us continue the proof of Lemma \ref{lem:K_j}. The group $F_n$, constructed above, acts on the Bass-Serre tree $S$ corresponding to its natural
representation as a fundamental group of a graph of groups, and all edge stabilizers for this action are finite (as they are conjugates of $G_{n-1}$).
On the other hand, $K_{n-1}$ is infinite (by condition \eqref{it:a}) and so it cannot fix any edge of $S$, although it is the stabilizer of some vertex of $S$.
Hence we can apply  Lemma \ref{lem:f_n_sb} to conclude that $E_{F_n}(K_{n-1})\subseteq K_{n-1}$ in $F_n$.
It follows that $E_{F_n}(K_{n-1}) \subseteq E_{K_{n-1}}(K_{n-1})=\{1\}$ by \eqref{it:a};
thus $K_{n-1}$ is a $G$-subgroup of $F_n$.

Since $H=\langle G_n,G_n^{t_n} \rangle$ normalizes $E_{F_n}(H)$ and  $G_n,G_n^{t_n} \leqslant H$, we deduce that both $G_n$ and $G_n^{t_n}$ normalize
$E_{F_n}(H)$ in $F_n$. Recall that $G_n=\St_{F_n}(v)$ for some vertex $v$ of $S$, by definition, and so $G_n^{t_n}=\St_{F_n}(t_n \circ v)$. On the other hand,
neither $G_n$ nor $G_n^{t_n}$ fixes any edge of $S$ (as $|G_n|=|G_n^{t_n}|>|G_{n-1}|$), therefore $E_{F_n}(H)\subseteq G_n \cap G_n^{t_n}$ by Lemma  \ref{lem:f_n_sb}.
However, according to Britton's lemma for HNN-extensions (see \cite[Sec. IV.2]{L-S}),
$G_n \cap G_n^{t_n}=G_{n-1}$, so $E_{F_n}(H)$ is a normal subgroup of $G_n$ contained in $G_{n-1}$.
Hence, recalling \eqref{eq:finite_gp_prop}, we can conclude that $E_{F_n}(H)=\{1\}$, i.e., $H$ is a $G$-subgroup of $F_n$.

Thus all the assumptions of Lemma \ref{lem:g-sbgps} are verified, hence there exists a non-elementary hyperbolic group $K_n$ and an epimorphism $\xi_{n-1}:F_n \to K_n$
such that $\xi_{n-1}$ is injective on $Q_n$, $\xi_{n-1}(K_{n-1})=\xi_{n-1}(H)=K_n$ and $E_{K_n}(K_n)=\{1\}$. Let $\zeta_{n-1}:K_{n-1} \to K_n$ denote the restriction of
$\xi_{n-1}$ to $K_{n-1}$.
To simplify the notation we will identify $Q_n$, $G_n$ and $t_n$ with their $\xi_{n-1}$-images in $K_n$. It is now easy to check that the properties
\eqref{it:a}-\eqref{it:h} all hold for $j=n$. Indeed, the properties \eqref{it:a}-\eqref{it:f} are evident from construction and \eqref{it:g} follows because
$$K_n=\xi_{n-1}(H)=\xi_{n-1} \left(\langle G_n,G_n^{t_n} \rangle\right)=\langle G_n,G_n^{t_n} \rangle.$$

To establish \eqref{it:h} for $j=n$, we first observe that for every $c \in G_n\setminus G_{n-1}$ the element
$c^{-1} c^{t_nct_n^{-1}}$ 
has infinite order in $F_n$ (e.g., by applying Britton's lemma again). Now, since $\langle c^{-1} c^{t_nct_n^{-1}}\rangle \subseteq Q_n$ in $F_n$ and $\xi_{n-1}$
is injective on $Q_n$, we are able to conclude that the element $c^{-1} c^{t_nct_n^{-1}}=\xi_{n-1}(c^{-1} c^{t_nct_n^{-1}})$ still has infinite order and
$\langle c^{-1} c^{t_nct_n^{-1}}\rangle \subseteq \xi_{n-1}(Q_n)=Q_n$ in $K_n$.

Thus for every $j \in \N$ we have constructed the groups $K_j$ together with epimorphisms $\zeta_{j-1}:K_{j-1} \to K_{j}$,
subsets $Q_j \subseteq K_j$ and elements $t_j \in K_j$ that satisfy conditions \eqref{it:a}-\eqref{it:h} above.
\end{proof}

\begin{thm}\label{thm:M} There exists a finitely generated group $M$ which contains a strictly ascending sequence of subgroups
$G_0<G_1<\dots$ and elements $a_i \in M$, $i \in \N$, that satisfy the four properties \descref{(P1)}-\descref{(P4)} above.
\end{thm}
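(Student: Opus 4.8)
The plan is to take $M$ to be the direct limit of the sequence of hyperbolic groups produced by Lemma~\ref{lem:K_j}. Set $M:=\varinjlim (K_j,\zeta_{j-1})$ and let $\mu_j:K_j\to M$ be the canonical homomorphisms, so that $\mu_{j}\circ\zeta_{j-1}=\mu_{j-1}$ for all $j\in\N$. Since every $\zeta_{j-1}$ is surjective, each $\mu_j$ is surjective as well; in particular $M$ is an epimorphic image of the hyperbolic (hence finitely generated) group $K_0$, so $M$ is finitely generated. I would then define the subgroup $G_i\leqslant M$ to be the image $\mu_i(G_i)$ and put $a_i:=\mu_i(t_i)\in M$ for $i\in\N$ (with $G_0=\mu_0(G_0)=\{1\}$).

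The key preliminary step is to check that $\mu_j$ is injective on $Q_j$ for every $j$. Writing $\zeta_{ij}:=\zeta_{j-1}\circ\dots\circ\zeta_i$, property \eqref{it:c} gives $\zeta_{ij}(Q_i)\subseteq Q_j$ and, by induction on $j-i$, that $\zeta_{ij}$ is injective on $Q_i$; since a direct limit identifies two elements only when some $\zeta_{ij}$ already does, $\mu_i$ is injective on $Q_i$. In view of \eqref{it:d}, \eqref{it:e} and \eqref{it:h} this has three consequences. First, $\mu_i$ is injective on $G_i$, so the chain $G_0<G_1<G_2<\dots$ persists in $M$, and is strictly increasing because $|G_{i-1}|<|G_i|$; moreover the identifications $\zeta_{i-1}(G_{i-1})=G_{i-1}\subseteq G_i$ of \eqref{it:e} assemble into genuine inclusions inside $M$. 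Second, $\mu_i$ is injective on each cyclic subgroup $\langle c^{-1}c^{t_ict_i^{-1}}\rangle$, hence $\mu_i\bigl(c^{-1}c^{t_ict_i^{-1}}\bigr)$ has infinite order in $M$ for every $c\in G_i\setminus G_{i-1}$, by \eqref{it:h}.

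Properties \descref{(P1)} and \descref{(P2)} are then immediate. Property \eqref{it:f} says $t_i$ centralizes $G_{i-1}$ in $K_i$, so $a_i=\mu_i(t_i)$ centralizes $\mu_i(G_{i-1})=G_{i-1}$ in $M$, which is \descref{(P1)}; and applying the surjection $\mu_i$ to the equality $K_i=\langle G_i,G_i^{t_i}\rangle$ of \eqref{it:g} yields $M=\mu_i(K_i)=\langle G_i,G_i^{a_i}\rangle$, which is \descref{(P2)}. For \descref{(P4)}, fix $i\in\N$ and $c\in G_i\setminus G_{i-1}$. Since $\mu_i$ is a homomorphism carrying $G_i$ onto $G_i$ and $t_i$ to $a_i$, the subgroup $\langle G_i,G_i^{a_ica_i^{-1}}\rangle$ of $M$ equals $\mu_i\bigl(\langle G_i,G_i^{t_ict_i^{-1}}\rangle\bigr)$ and therefore contains $\mu_i\bigl(c^{-1}c^{t_ict_i^{-1}}\bigr)$, an element of infinite order by the previous paragraph. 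As every $G_n$ is finite, so is every conjugate of $G_n$ in $M$, and an infinite group cannot lie inside a finite one; hence $\langle G_i,G_i^{a_ica_i^{-1}}\rangle$ is contained in no conjugate of any $G_n$. The subgroup $\langle G_i,G_i^{a_i^{-1}ca_i}\rangle$ is treated symmetrically: conjugating it by $a_i$ turns it into $\langle G_i^{a_i},G_i^{ca_i}\rangle$, which contains $u^{-1}u^{c}$ with $u:=a_ica_i^{-1}$, and an elementary commutator manipulation (both $u^{-1}u^{c}$ and $c^{-1}c^{t_ict_i^{-1}}$ are conjugates of $[u,c]^{\pm1}$) identifies $u^{-1}u^{c}$, up to conjugacy in $M$, with the inverse of $\mu_i\bigl(c^{-1}c^{t_ict_i^{-1}}\bigr)$; thus it too has infinite order, forcing $\langle G_i,G_i^{a_i^{-1}ca_i}\rangle$ to be infinite.

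I expect the only genuinely delicate point to be exactly the one that Lemma~\ref{lem:K_j} was engineered to handle: making sure the infinite-order witnesses needed for \descref{(P4)} survive the passage to the direct limit. This is encapsulated in the injectivity of the $\zeta_{j-1}$ (hence of the $\mu_j$) on the quasiconvex sets $Q_j$, which itself rests on the small-cancellation input of Lemma~\ref{lem:g-sbgps}; once that is granted, the proof of Theorem~\ref{thm:M} is essentially bookkeeping, with \descref{(P1)} and \descref{(P2)} immediate and the two halves of \descref{(P4)} differing only by the commutator identity noted above.
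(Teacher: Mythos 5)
Your construction is the same as the paper's: you take $M$ to be the direct limit of the sequence $(K_j,\zeta_j)$ from Lemma~\ref{lem:K_j}, use the injectivity of the limit maps on the sets $Q_j$ to transport the infinite-order witnesses $c^{-1}c^{a_ica_i^{-1}}$ into $M$, and read off \descref{(P1)}, \descref{(P2)} and \descref{(P4)} exactly as the paper does. Your handling of the second subgroup in \descref{(P4)} (conjugating $\langle G_i,G_i^{a_i^{-1}ca_i}\rangle$ by $a_i$ and observing that $u^{-1}u^{c}$ with $u=a_ica_i^{-1}$ is conjugate to $\bigl(c^{-1}c^{u}\bigr)^{-1}$) is a correct, equivalent reformulation of the paper's remark that $c(c^{-1})^{a_i^{-1}ca_i}$ is a cyclic conjugate of $c^{-1}c^{a_ica_i^{-1}}$.

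The one genuine omission is \descref{(P3)}: the theorem asserts all four properties, and you never verify that $M$ has property (FA). This is precisely where the choice of the base group $K_0$ as a non-elementary hyperbolic group \emph{with property (FA)} is used: since each $\zeta_j$ is surjective, $M$ is a quotient of $K_0$, and property (FA) passes to quotients, so $M$ has (FA). The fix is one sentence from data you already have in hand, but as written the proposal does not establish the statement as claimed.
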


\begin{proof}
Define $M:=\lim_{j \to \infty}(K_j,\zeta_j)$ as the direct limit of the sequence $(K_j,\zeta_j)$ constructed in Lemma \ref{lem:K_j}.
Let $\tau_j: K_j \to M$ denote the canonical epimorphism, $j \in \N \cup\{0\}$. The properties \eqref{it:c} and \eqref{it:d} of Lemma \ref{lem:K_j}
imply that
$\tau_j$ is injective on $G_j$ and $Q_j$, therefore we will identify $G_j$ and its elements with their images in $M$ for every $j \in \N\cup \{0\}$.
Property \eqref{it:e} yields that $G_{j-1}< G_j$ in $M$, for all $j \in \N$. For every $j \in \N$ we let
$a_j:=\tau_j(t_j) \in M$. Then property \eqref{it:f} of Lemma \ref{lem:K_j} implies \descref{(P1)}, and property \eqref{it:g} gives \descref{(P2)}. The group $M$ is a
quotient of $K_0$, which has (FA), hence $M$ has (FA) as property (FA) passes to quotients, thus \descref{(P3)} also holds. So it remains to check \descref{(P4)}.

Take any $j \in \N$ and consider any $c \in G_j\setminus G_{j-1}$ in $M$. Then, by condition \eqref{it:h},
the element $c^{-1} c^{t_jct_j^{-1}}$ will have infinite order in $K_j$ and the cyclic subgroup generated by this element will be contained in $Q_j$. Since the epimorphism $\tau_j$ is injective
on $Q_j$, we can conclude that $\tau_j(c^{-1} c^{t_jct_j^{-1}})=c^{-1} c^{a_jca_j^{-1}}$ has infinite order in $M$. Clearly
$c^{-1} c^{a_jca_j^{-1}}\in \langle G_j,G_j^{a_j c a_j^{-1}}\rangle$, thus the subgroup $\langle G_j,G_j^{a_j c a_j^{-1}}\rangle\leqslant M$ is infinite.
One can also note that the element $c (c^{-1})^{a_j^{-1}ca_j}\in \langle G_j,G_j^{a_j^{-1} c a_j}\rangle$ is a cyclic conjugate of $c^{-1} c^{a_jca_j^{-1}}$ in $M$.
Consequently, the subgroup $ \langle G_j,G_j^{a_j^{-1} c a_j}\rangle\leqslant M$ is also infinite.
Recalling that for each $n \in \N\cup \{0\}$, the subgroup $G_n \leqslant M$ is finite,
we are able to conclude that \descref{(P4)} holds.
\end{proof}

\section{Proof of the main result}\label{sec:main_proof}

We are finally prepared to prove the main result.
\begin{proof}[Proof of Theorem \ref{thm:main}] Let $M$ be the finitely generated group given by Theorem \ref{thm:M}. Then we can construct the limit group $L$ and the $\R$-tree $T$ as in Section \ref{sec:strong_conv}.
The group $L$ is finitely generated and acts on $T$ non-trivially by isometries with finite arc stabilizers by Theorem~\ref{thm:small_arc_stab},
since each $G_n \leqslant M$ is a finite group by construction (see Section~\ref{sec:M}). Moreover, $L$ has property (FA) by Lemma \ref{lem:(FA)}.

Finally, if $P$ is a finitely presented group then any epimorphism from $P$ to $L$ factors through some epimorphism $P \to L_i$ for some $i \in \N$, because $L$ is the direct limit of the groups $L_i$
(see \cite[Lemma 3.1]{Cor-Kar} for a proof of this fact). Therefore $P$ inherits from $L_i$ a non-trivial action on the Bass-Serre tree $T_i$ (corresponding to the splitting of $L_i$ as an amalgamated free product),
and thus $P$ does not have (FA).
\end{proof}

\begin{rem}\label{rem:dyadic} Recall that, by construction, each $L_i$ acts on the simplicial $\R$-tree $T_i$, $i \in \N$, where the length of an edge is set to be $1/2^{i-1}$.
Since $T_i$ converge to $T$ strongly, it is clear that their $0$-skeletons converge to a $\mathbb{D}$-tree $S$, where $\mathbb{D}\leqslant \mathbb{Q}$ is the group of dyadic rational numbers,
and $T$ is the $\R$-completion of $S$ (see \cite[Sec 1]{Gillet-Shalen} for a discussion of $\Lambda$-completions). Evidently the natural action of $L$ on $S$ is still non-trivial, thus
the pair $(L,S)$ gives an example of a finitely generated group $L$ which has property (FA), but admits a non-trivial action, without inversions, on a $\mathbb D$-tree $S$.  Since
the $\mathbb{Q}$-rank of $\mathbb{D}$ is $1$, this example shows that finite presentability is a necessary assumption in the results of Gillet and Shalen \cite[Prop. 27 or Thm. C]{Gillet-Shalen},
mentioned in the Introduction.
\end{rem}


  \end{document}